\DeclareMathAlphabet{\mathbf}{OT1}{cmr}{bx}{it}
\newcommand{\supp}{\mathrm{supp}\,}
\newcommand{\mc}{\mathcal}
\DeclareMathAlphabet{\mathbf}{OT1}{cmr}{bx}{it}
\DeclareMathOperator{\essinf}{essinf}
\DeclareMathOperator*{\argmax}{argmax}
\newcommand{\ve}{\mathbf e}
\newcommand{\vi}{\mathbf i}
\newcommand{\vk}{\mathbf k}
\newcommand{\valpha}{{\boldsymbol \alpha}}
\newcommand{\vnu}{{\boldsymbol \nu}}
\newcommand{\vtau}{{\boldsymbol \tau}}
\newcommand{\vxi}{\boldsymbol{\xi}}
\newcommand{\bbN}{\mathbb{N}}
\newcommand{\bbR}{\mathbb{R}}
\newcommand{\bbP}{\mathbb{P}}
\newcommand{\Span}{\mathrm{span}}
\DeclareMathOperator{\e}{\mathrm e}
\newcommand{\ev}[1]{ {\boldsymbol{\mathsf  E}} \left[  #1 \right]}
\renewcommand{\d}{\mathrm{d}}
 \theoremstyle{definition}
 \newtheorem{defi}{Definition}
 \newtheorem{rem}[defi]{Remark}
 \newtheorem{propo}[defi]{Proposition}
 \newtheorem{theo}[defi]{Theorem}
 \newtheorem{lem}[defi]{Lemma}
 \newtheorem{assum}{Assumption}
\newcommand{\subjclass}[2][1991]{%
  \let\@oldtitle\@title%
  \gdef\@title{\@oldtitle\footnotetext{#1 \emph{Mathematics subject classification.} #2}}%
}
\newcommand{\keywords}[1]{%
  \let\@@oldtitle\@title%
  \gdef\@title{\@@oldtitle\footnotetext{\emph{Key words and phrases.} #1.}}%
}
\begin{document}
%
%

\title{Convergence of Sparse Collocation for Functions of Countably Many Gaussian Random Variables (with Application to Elliptic PDEs)}
\author{Oliver G. Ernst$^\dag$, Bj\"orn Sprungk$^\dag$ and Lorenzo Tamellini$^\ast$\\
\small
$^\dag$ Department of Mathematics, TU Chemnitz, Germany\\
\small
$^\ast$ Istituto di Matematica Applicata e Tecnologie Informatiche ``E. Magenes" del CNR, Pavia, Italy
}
\date{}
\maketitle

\pagestyle{myheadings}
\thispagestyle{plain}
\markboth{O. G. Ernst, B. Sprungk, L. Tamellini}{Sparse Collocation for Lognormal Diffusion}
\maketitle

%
%

\begin{abstract}
We give a convergence proof for the approximation by sparse collocation of Hilbert-space-valued functions depending on countably many Gaussian random variables. 
Such functions appear as solutions of elliptic PDEs with lognormal diffusion coefficients.
We outline a general $L^2$-convergence theory based on previous work by Bachmayr et al. \cite{BachmayrEtAl2015} and Chen \cite{Chen2016} and establish an algebraic convergence rate for sufficiently smooth functions assuming a mild growth bound for the univariate hierarchical surpluses of the interpolation scheme applied to Hermite polynomials.
We verify specifically for Gauss-Hermite nodes that this assumption holds and also show algebraic convergence w.r.t.\ the resulting number of sparse grid points for this case.
Numerical experiments illustrate the dimension-independent convergence rate.\\[0.5em]
\end{abstract}

\noindent
\textbf{Keywords:}
random PDEs,
parameteric PDEs,
lognormal diffusion coefficient, 
best-$N$-term approximation, 
sparse grids,
stochastic collocation, 
high-dimensional approximation,
high-dimensional interpolation,
Gauss-Hermite points.\\[0.5em]

\noindent
\textbf{Mathematics Subject Classification:} 65D05, 65D15, 65C30, 60H25.\\[1em]
%
%
%
%
\section{Introduction}
The elliptic diffusion problem 
\begin{equation} \label{equ:PDE}
	-\nabla \cdot (a(\omega)\,\nabla u(\omega))
	= f
	\quad \text{in } D \subset \mathbb R^d, 
	\qquad
	u(\omega) = 0\;\text{on }\partial D,
	\qquad
	\bbP\text{-a.s.~,}
\end{equation}
with a random diffusion coefficient $a:\Omega\to L^\infty(D)$ with respect to an underlying probability space  $(\Omega, \mathcal A, \bbP)$ has become the standard model problem for numerical methods for solving random PDEs.
For modeling reasons the diffusion field is often taken to have a lognormal probability law, 
which complicates both the study of the well-posedness of the problem 
\cite{Charrier2012,gittelson:logn,sarkis:lognormal,MuglerStarkloff2013} 
as well as the analysis of approximation methods.
One of the challenges is that the most common parametrization of a Gaussian random field 
-- the \emph{Karhunen-Lo\`eve expansion} \cite{Adler1981,GhanemSpanos1991} -- involves a countable number of standard normal random variables
\begin{equation} \label{equ:KLE}
	\log a(x, \omega) = \phi_0(x) + \sum_{m=1}^\infty \phi_m(x) \, \xi_m(\omega), 
\end{equation}
where $\phi_0, \phi_m \in L^\infty(D)$ and $\xi_m\sim N(0,1)$  i.i.d. for $m\in\bbN$, leading to an elliptic PDE with a countably infinite number of random parameters $\vxi = (\xi_m)_{m\in\bbN}\in\bbR^\bbN$.

Besides the stochastic Galerkin method \cite{GhanemSpanos1991,lemaitre:book} 
the most common methods for approximating the solution $u(\vxi)$ of such random or parametric elliptic PDEs are \emph{polynomial collocation methods}.
Early works on such methods for random PDEs considered a finite (if large) 
number of random parameters, a setting also referred to as \emph{finite-dimensional noise} 
\cite{XiuHesthaven2005,BabuskaEtAl2010,NobileEtAl2008a,NobileEtAl2008b}.
In this case the parametric representation of $\log a$ is 
typically obtained by truncating a series expansion of the random field such as \eqref{equ:KLE}. 

The analysis of the problem involving an infinite number of random variables was 
first discussed by Cohen, DeVore and Schwab in 
\cite{CohenDeVoreSchwab2010,CohenDeVoreSchwab2011}
in the simpler setting in which the diffusion field $a$, rather than its logarithm as in \eqref{equ:KLE}, is expanded in a series.
This results in an affine dependence of $a$ on the random variables $\xi_m$, which are, moreover, assumed to have bounded support.  
In this framework the convergence of the best $N$-term approximation of the solution of the diffusion equation by Taylor and Legendre series was shown to be independent of the number of random variables; this result was further refined in the recent paper \cite{BachmayrEtAl2016}.
Employing the theoretical concepts stated in \cite{CohenDeVoreSchwab2010,CohenDeVoreSchwab2011},
Chkifa, Cohen and Schwab analyze in \cite{ChkifaEtAl2014}
collocation methods based on Lagrange interpolation with Leja points
for problems with diffusion coefficients depending linearly on 
an infinite number of bounded random variables, which are adaptive
in the polynomial degree as well as the number of active dimensions or
random variables, respectively.  The adaptive algorithm itself is
related to the earlier work \cite{GerstnerGriebel2003}.  Each
interpolatory approximation gives rise to a quadrature scheme, and
in \cite{SchillingsSchwab2013} Schillings and Schwab consider sparse
adaptive quadrature schemes  in the same setting of \cite{ChkifaEtAl2014} in connection with approximating
expectations with respect to posterior measures in Bayesian inference.
Extensions to the case where the diffusion coefficient $a$ depends 
non-linearly on an infinite number of random variables
with bounded support was discussed in \cite{ChkifaEtAl2015}.

Returning to the original lognormal diffusion problem, i.e.,
with $a$ expanded as in \eqref{equ:KLE} and depending on random variables with unbounded support, 
Hoang and Schwab \cite{HoangSchwab2014} have obtained convergence results on best $N$-term approximation by Hermite polynomials. 
These were recently extended by Bachmayr et al.\ \cite{BachmayrEtAl2015} using a different analytical approach employing 
a weighted $\ell^2$-summability of the coefficients of the Hermite expansion of the solution 
and their relation to partial derivatives.
The theoretical tools provided in \cite{BachmayrEtAl2015} enabled a convergence analysis for adaptive sparse quadrature \cite{Chen2016} 
employing, e.g., Gauss-Hermite nodes for Banach space-valued functions of countably many Gaussian random variables.

In this paper we address the convergence of sparse polynomial collocation for functions of infinitely many Gaussian random variables, such as the solution to the lognormal diffusion problem \eqref{equ:PDE}.
Specifically, we follow the approach of \cite{BachmayrEtAl2015} and \cite{Chen2016} to prove an algebraic convergence rate with respect to the number of grid points for sparse collocation based on Gauss-Hermite interpolation nodes in the case of countably many variables.
In particular, the result applies to solutions $u$ of \eqref{equ:PDE} where $a$ is a lognormal random field.
In addition, we highlight the common ideas surrounding sparse collocation found in the works mentioned above.
The convergence result in terms of the number of collocation points is obtained in two steps: we first link the error to the size of the multi-index set definining the sparse collocation and then derive a bound on the number of points in the associated sparse grid. 
This procedure has been followed also in all the above-mentioned work analyzing the convergence 
of sparse grid quadrature and collocation schemes.
An alternative strategy which instead links the error directly to the number of collocation points by introducing the so-called ``profits'' of each component of the sparse grids, has been discussed in \cite{nobile.eal:optimal-sparse-grids,hajiali.eal:MISC2},
albeit only in the case of random variables with bounded support.

We remark that, besides the classical node families such as Gauss-Hermite and Genz-Keister \cite{GenzKeister1996} for quadrature and interpolation on  $\bbR$ with respect to  a Gaussian measure, Jakeman and Narayan \cite{NarayanJakeman2014} 
have introduced \emph{weighted Leja points}---a generalization of the classical Leja point construction (see e.g.\ \cite{Leja1950,DeMarchi2004} and references therein) to unbounded domains and arbitrary weight functions.
Moreover, they have proved that these node sets possess the correct asymptotic 
distribution of interpolation nodes and illustrate their computational potential in numerical experiments.
Note that such weighted Leja points provide a nested and linearly growing sequence of interpolation nodes.
The analysis of sparse collocation based on normal Leja points, i.e., 
weighted Leja points for a Gaussian measure, is an interesting topic for future research.

The remainder of the paper is organized as follows.
In the next section we introduce the general setting and notation and construct 
the sparse grid collocation operator based on univariate Lagrange interpolation operators.
Section \ref{sec:conv} is devoted to the convergence analysis of such operators.
First, we outline in Subsection \ref{subsec:convGen} the general approaches 
to prove algebraic convergence rates as they can be found in the works mentioned above.
Later, we follow in Subsection \ref{subsec:convSG} the approach of \cite{BachmayrEtAl2015, Chen2016} 
and derive sufficient conditions for the underlying univariate interpolation nodes in order to obtain such rates when approximating 
``countably-variate'' functions of certain smoothness.
Finally, in Subsection \ref{subsec:convGH} we verify these conditions for Gauss-Hermite nodes, 
provide bounds for the number of nodes in the resulting sparse grids, and state a convergence result with respect to this number.
Section \ref{sec:PDE} comes back to our motivation and comments on the application to random elliptic PDEs 
before we verify our theoretical findings in Section \ref{sec:numerics} for a simple boundary value problem in one spatial dimension. 
We draw final conclusions in Section \ref{sec:concl}. 

%
%
\section{Setting and Sparse Collocation}

We consider functions $f$ defined on a parameter domain $\Gamma \subseteq \bbR^\bbN$ taking values in a separable real Hilbert space $\mc H$ with inner product $(\cdot,\cdot)_{\mc H}$ and norm $\|\cdot\|_{\mc H}$.
As our interest lies in the approximation of the dependence of $f:\Gamma \to \mc H$ on $\vxi\in\Gamma$ by multivariate polynomials based on Lagrange interpolation, a minimal requirement is that point evaluation of $f$ at any $\vxi \in \Gamma$ be well-defined. 
Stronger smoothness requirements on $f$ become necessary when deriving convergence rate estimates for the approximations.

We introduce a probability measure $\mu$ on the measurable space $(\bbR^\bbN, \otimes_{m\geq1}\mc B(\bbR))$ as the countable product measure of standard Gaussian measures on $\bbR$, i.e.,
\begin{equation}\label{equ:mu}
	\mu = \bigotimes_{m\geq 1} N(0,1).
\end{equation}
and denote by $L^2_\mu(\Gamma; \mc H)$ the space of all (equivalence classes of) functions with finite second moments with respect to $\mu$ in the sense that
\[
   \int_{\bbR^\bbN} \|f(\vxi)\|_{\mc H}^2 \, \mu(\d\vxi) < \infty
\]
which forms a Hilbert space with inner product
\[
    (f,g)_{L^2_\mu} = \int_{\bbR^\bbN} (f(\vxi), g(\vxi))_{\mc H} \, \mu(\d\vxi).
\]
In the following we require 

\begin{assum}\label{assum:f}
Let $f:\Gamma\to\mc H$ where $\mu(\Gamma) = 1$.
There holds (for a measurable extension of $f$ to $\bbR^\bbN$) that $f\in L^2_\mu(\bbR^\bbN; \mc H)$.
\end{assum}

It is shown, e.g.,\ in \cite[Theorem~2.5]{SchwabGittelson2011}, that the countable tensor product of Hermite polynomials forms an orthonormal basis of $L^2_\mu(\bbR^\bbN; \mc H)$.
Under Assumption~\ref{assum:f} we therefore have
\begin{equation} \label{equ:f_PCE}
	f(\vxi) = \sum_{\vnu \in \mc F} f_{\vnu} \, H_{\vnu}(\vxi), 
	\qquad 
	f_{\vnu} := \int_{\bbR^\bbN} f(\vxi) \, H_{\vnu}(\vxi)\, \mu(\d\vxi) \in \mc H,
\end{equation}
where $H_{\vnu}(\vxi) = \prod_{m\geq1} H_{\nu_m}(\xi_m)$ and $H_\nu$ denotes the univariate Hermite orthonormal polynomial of degree $\nu$ as well as
\[
	\mc F := \left\{\vnu \in \bbN_0^\bbN: |\vnu|_0 < \infty \right\}, 
	\qquad 
	|\vnu|_0 := |\{j \in \bbN: \nu_j > 0 \}|.
\]

%
%
\subsection{Sparse Polynomial Collocation}

The construction of sparse collocation operators below is based on sequences of univariate Lagrange interpolation operators 
$U_k$ mapping into the set $\mc P_k$ of univariate polynomials of degree at most $k\in\bbN_0$.
Thus, 
\[
	(U_k f)(\xi) = \sum_{i=0}^k f(\xi_{i}^{(k)}) \, L_i^{(k)}(\xi), 
	\qquad 
	f\colon \bbR \to \bbR,
\]
where $\{L_i^{(k)}\}_{i=0}^k$ denote the Lagrange fundamental polynomials of degree $k$ associated with the set of $k+1$ distinct interpolation nodes $\Xi^{(k)} := \{ \xi_{0}^{(k)},\xi_{1}^{(k)},\dots,\xi_{k}^{(k)}\}$.

\begin{rem}
It may also be of interest to consider sequences of interpolation operators $U_k$ with a more general degree of polynomial exactness $n(k)$ where $n:\bbN_0\to\bbN_0$ is indecreasing and $n(0) = 0$, see for instance \cite{XiuHesthaven2005,BabuskaEtAl2010,NobileEtAl2008a,NobileEtAl2008b,NobileEtAl2016,nobile.eal:optimal-sparse-grids}.
However, we restrict ourselves to $n(k)=k$ for simplicity.
\end{rem}

We also introduce the \emph{detail operators} 
\[
	\Delta_k := U_k - U_{k-1}, \quad k\geq 0,
\]
where we set $U_{-1} :\equiv 0$, and observe that 
\[
	U_k = U_{k-1} + \Delta_k = \Delta_0 + \Delta_1 + \cdots + \Delta_k~.
\]

\paragraph{Tensorization} 
For any multi-index $\vk = (k_m)_{m \in \mathbb N} \in \mc F$ the \emph{(full) tensor product interpolation operator}
$
   U_\vk := \bigotimes_{m \in \bbN} U_{k_m}
$
is defined by
\begin{equation} \label{eq:tensor-int}
   (U_\vk f)(\vxi)
   =
   \left(\bigotimes_{m \in \bbN} U_{k_m} f \right)(\vxi)
   =
   \sum_{\vi \le \vk} f(\vxi_{\vi}^{(\vk)}) L_\vi^{(\vk)}(\vxi),
  \qquad
  f:\bbR^\bbN \to \bbR,
\end{equation}
where $\vxi_\vi^{(\vk)} \in \bbR^\bbN$ ranges over all points in the Cartesian product
\begin{align}\label{equ:Xi}
    \Xi^{(\vk)} := \bigtimes_{m \in \bbN} \Xi^{(k_m)},
    \quad \text{ with } \quad
    |\Xi^{(\vk)}| = \prod_{m \in \bbN} (1+k_m),
\end{align}
and where
\begin{align}\label{equ:L_nu}
   L_\vi^{(\vk)}(\vxi) := \prod_{m \in \mathbb N} L_{i_m}^{(k_m)}(\xi_m)
\end{align}
is a multivariate polynomial of (total) degree $|\vk|_1 = \sum_m k_m$. 
Note that $L_0^{(0)}(\xi) \equiv 1$; in particular, since $\vk \in \mc F$ 
all but a finite number of factors in \eqref{equ:Xi} and \eqref{equ:L_nu} are equal to one so that the corresponding products can be regarded as finite.
The tensor product interpolation operator $U_\vk$ maps into the multivariate (tensor product) polynomial space
\begin{equation} \label{eq:P_i}
   \mc Q_\vk := \Span \{ \vxi^\vi : 0 \le i_m \le k_m, m \in \bbN \},
   \qquad \vk \in \mc F.
\end{equation}
Note that, since both the univariate polynomial sets of Lagrange 
fundamental polynomials $\{L_i^{(k)}\}_{i=0}^k$ and the
Hermite orthonormal polynomials $\{H_i\}_{i=0}^k$  
form a basis of $\mc P_k$, equivalent characterizations are 

\begin{equation*}
\begin{split}
   \mc Q_\vk
   &=
   \Span \{ L_\vi^{(\vk)} : 0 \le i_m \le k_m, m \in \bbN \} \\
   &=
   \Span \{ H_\vi         : 0 \le i_m \le k_m, m \in \bbN \},
   \qquad \vk \in \mc F.
\end{split}     
\end{equation*}

In order for the tensor product interpolation operator $U_\vk$ to be applicable also to functions defined only on a subset $\Gamma \subset \bbR^\bbN$, we assume the interpolation nodes to all lie in $\Gamma$: 

\begin{assum}\label{assum:Xi}
Let $\Gamma\subset \bbR^\bbN$ denote the domain from Assumption \ref{assum:f}.
For all $\vk \in \mc F$ the Cartesian products of nodal sets $\Xi^{(\vk)}$ given in \eqref{equ:Xi} satisfy
$
	\Xi^{(\vk)} \subset \Gamma.
$
\end{assum}

In the following we denote by $\bbR^\Gamma$ the set of all mappings from $\Gamma$ to $\bbR$.
In analogy to \eqref{eq:tensor-int} we define for any multi-index $\vk \in \mc F$ the tensorized detail operator
\[
	\Delta_{\vk} := \bigotimes_{m \in \bbN} \Delta_{k_m} \colon \bbR^\Gamma \to \mc Q_{\vk}.
\]
Finally, we associate with a finite subset $\Lambda \subset \mc F$ the multivariate polynomial space
\begin{equation} \label{eq:P_Lambda}
   \mc P_{\Lambda} := \sum_{\vi \in \Lambda} \mc Q_\vi
\end{equation}
and define the associated \emph{sparse (polynomial) collocation operator} $U_\Lambda \colon \bbR^\Gamma \to \mc P_{\Lambda}$ by
\begin{equation} 
	U_{\Lambda} := \sum_{\vi \in \Lambda} \Delta_{\vi}.
\end{equation}
We will see that $U_\Lambda$ is exact on $\mc P_\Lambda$ under some natural assumptions on the multi-index set $\Lambda$, for which we first recall some basic definitions given in \cite{CohenDeVore2015,ChkifaEtAl2014,ChkifaEtAl2015}.

\paragraph{Partial orderings and monotone sets of multi-indices} We define a partial ordering on $\mc F$ by 
\begin{align*}
	\tilde \vnu \leq \vnu 
	\quad & :\Leftrightarrow \quad 
	\tilde \nu_m \leq \nu_m \; \forall m\in\bbN
\intertext{as well as}
	\tilde \vnu < \vnu 
	\quad & :\Leftrightarrow \quad 
	\tilde \vnu \leq \vnu \, 
	\text{ and } \tilde \nu_m < \nu_m \text{ for at least one } m\in\bbN
\intertext{and introduce the relation}
	\tilde \vnu \not \leq \vnu 
	\quad & :\Leftrightarrow \quad 
	\tilde\nu_m > \nu_m \text{ for at least one } m\in\bbN.
\end{align*}
We shall call a set of multi-indices $\Lambda \subset \mc F$ \emph{monotone} if $\vnu \in \Lambda$ and $\tilde \vnu \leq \vnu$ together imply that also $\tilde\vnu \in \Lambda$.
Finally, for a multi-index $\vnu \in \mc F$ we define its \emph{rectangular envelope} $\mc R_\vnu$ by
\begin{align*}\label{equ:R_nu}
	\mc R_\vnu & :=  \{\tilde \vnu \in \mc F: \tilde \vnu \leq \vnu \}.
\end{align*}
Note that $\mc R_{\vnu}$ for $\vnu \in \mc F$ is a finite (and monotone) set with cardinality
\begin{equation}\label{eq:card_of_R_nu}
	|\mc R_{\vnu}| = \prod_{m \in \bbN} (1 + \nu_m) < \infty.  
\end{equation}

%
%
\subsection{Polynomial Exactness of Sparse Collocation}

The introduction of the rectangular envelope $\mc R_\vnu $ of a multi-index $\vnu \in \mc F$ permits a convenient characterization of monotone multi-index sets $\Lambda$ and the associated polynomial space $\mc P_\Lambda$ introduced in \eqref{eq:P_Lambda}.

\begin{propo}\label{propo:montone_set}
If $\Lambda \subset \mc F$ is monotone, then
\[
   \Lambda = \bigcup_{\vnu \in \Lambda} \mc R_\vnu
   \quad \text{ and } \quad
   \mc P_\Lambda
   =
   \Span \{ \vxi^\vnu : \vnu \in \Lambda \} 
   =
   \Span \{ H_\vnu : \vnu \in \Lambda \}.
\]
\end{propo}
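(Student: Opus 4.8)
The plan is to treat the two assertions in turn, deriving the span identities of the second from the set identity of the first. The set identity $\Lambda = \bigcup_{\vnu \in \Lambda} \mc R_\vnu$ splits into two inclusions. For $\Lambda \subseteq \bigcup_{\vnu\in\Lambda}\mc R_\vnu$ I would simply observe that each $\vnu \in \Lambda$ lies in its own envelope $\mc R_\vnu$, since $\vnu \leq \vnu$; this uses nothing beyond reflexivity of the partial order. The reverse inclusion is precisely where monotonicity enters: if $\tilde\vnu \in \mc R_\vnu$ for some $\vnu \in \Lambda$, then $\tilde\vnu \leq \vnu$ with $\vnu \in \Lambda$, and the defining property of a monotone set forces $\tilde\vnu \in \Lambda$. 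Combining the two inclusions gives the claimed set identity.

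For the polynomial-space identity I would first rewrite a single tensor-product block in terms of its envelope. By the definition \eqref{eq:P_i} we have $\mc Q_\vi = \Span\{\vxi^\vj : 0 \leq j_m \leq i_m,\ m\in\bbN\}$, and the condition $0 \leq j_m \leq i_m$ for all $m$ is exactly $\vj \leq \vi$, i.e.\ $\vj \in \mc R_\vi$; hence $\mc Q_\vi = \Span\{\vxi^\vj : \vj \in \mc R_\vi\}$. Summing over $\vi \in \Lambda$ and using the elementary fact that a sum of spans equals the span of the union of the spanning sets, I obtain
\[
   \mc P_\Lambda = \sum_{\vi\in\Lambda}\mc Q_\vi = \Span\Bigl\{\vxi^\vj : \vj \in \bigcup_{\vi\in\Lambda}\mc R_\vi\Bigr\}.
\]
The first part then replaces the union by $\Lambda$, yielding $\mc P_\Lambda = \Span\{\vxi^\vnu : \vnu \in \Lambda\}$.

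The Hermite characterization requires no new argument: the equivalent description of $\mc Q_\vi$ recorded just before Assumption~\ref{assum:Xi} states that $\{H_\vj : \vj \in \mc R_\vi\}$ spans the same space $\mc Q_\vi$ (both $\{L_i^{(k)}\}_{i=0}^k$ and $\{H_i\}_{i=0}^k$ being bases of $\mc P_k$), so repeating the previous display with $\vxi^\vj$ replaced by $H_\vj$ gives $\mc P_\Lambda = \Span\{H_\vnu : \vnu \in \Lambda\}$.

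I do not anticipate a genuine obstacle here. Since $\Lambda$ and each $\mc R_\vnu$ consist of finitely supported multi-indices, all products and spanning sets are effectively finite and no summability or convergence issue arises. The only points demanding a little care are the bookkeeping of the equivalence $\{0 \leq j_m \leq i_m \ \forall m\} \Leftrightarrow \vj \in \mc R_\vi$ and making sure the ``sum of spans equals span of union'' step is explicitly invoked rather than silently assumed.
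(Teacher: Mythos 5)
Your argument is correct and follows essentially the same route as the paper: the set identity via reflexivity of $\leq$ in one direction and the definition of monotonicity in the other, and the span identities by rewriting each $\mc Q_\vi$ over its rectangular envelope and invoking the set identity (which is where the paper also notes monotonicity is needed). Your version merely makes explicit the ``sum of spans equals span of union'' step that the paper leaves implicit.
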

\begin{proof}
Since $\vnu \in \mc R_\vnu$ for all $\vnu \in \Lambda$ the set on the left is obviously a subset of that on the right.
Conversely, given $\vi \in \mc R_\vnu$ for some $\vnu \in \Lambda$, the definition of $\mc R_\vnu$ implies $\vi \le \vnu$, which in turn implies $\vi \in \Lambda$ by the monotonicity of $\Lambda$.
Moreover, monontonicity also implies
\begin{align*}
	\mc P_\Lambda
	& = \sum_{\vk \in \Lambda} \mc Q_\vk
	=  \Span \{ \vxi^\vi : \vi \leq \vk, \vk \in \Lambda \} 
	= \Span \{ \vxi^\vi : \vi \in \Lambda \}
	= \Span \{ H_\vnu : \vnu \in \Lambda \},
\end{align*} 
where monontonicity is required for the two last equalities.
\end{proof}
In view of Proposition~\ref{propo:montone_set}, $\mc P_\Lambda$ for a multi-index set $\Lambda \subset \mc R_\vk$ represents a sparsification of $\mc Q_\vk$.
In particular, the full tensor product polynomial space $\mc Q_\vk$ coincides with $\mc P_\Lambda$ for $\Lambda = \mc R_\vk$.
Similarly, the full tensor approximation operator $U_\vk$ defined in \eqref{eq:tensor-int} can be expressed as $U_\vk = \sum_{\vi \in \mc R_\vk} \Delta_\vi$.

\begin{propo}
\label{propo:U_Lambda}
Let $\Lambda \subset \mc F$ be a finite and monotone set. Then $U_\Lambda p = p$ for all $p\in \mc P_\Lambda$.
In particular, for all $p\in\mc P_\Lambda$ we have $\Delta_\vi p = 0$ for $\vi \notin \Lambda$.
\end{propo}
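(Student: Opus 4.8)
The plan is to exploit the Hermite characterization $\mc P_\Lambda = \Span\{ H_\vnu : \vnu \in \Lambda \}$ furnished by Proposition~\ref{propo:montone_set}: by linearity of $U_\Lambda$ it suffices to verify both assertions for $p = H_\vnu$ with an arbitrary fixed $\vnu \in \Lambda$, so the entire argument reduces to understanding how the tensorized detail operators $\Delta_\vi$ act on a single tensor-Hermite polynomial.

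First I would record the univariate fact underlying everything: since each $U_k$ reproduces $\mc P_k$ exactly, the detail operator $\Delta_j = U_j - U_{j-1}$ annihilates $\mc P_{j-1}$, and in particular $\Delta_j H_\nu = 0$ whenever $\nu < j$. Tensorizing via $\Delta_\vi = \bigotimes_{m} \Delta_{i_m}$ yields $\Delta_\vi H_\vnu = \prod_{m} \Delta_{i_m} H_{\nu_m}$, a product in which all but finitely many factors equal $1$ (because $\vi,\vnu \in \mc F$ and $\Delta_0 H_0 = U_0 H_0 = H_0$). The crucial observation is that this product vanishes as soon as a single coordinate satisfies $i_m > \nu_m$; that is,
\[
  \Delta_\vi H_\vnu = 0 \qquad \text{whenever } \vi \not\le \vnu .
\]

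With this vanishing in hand the first claim follows by combining it with monotonicity. In $U_\Lambda H_\vnu = \sum_{\vi \in \Lambda} \Delta_\vi H_\vnu$ only indices $\vi \in \Lambda$ with $\vi \le \vnu$ survive; since $\vnu \in \Lambda$ and $\Lambda$ is monotone we have $\mc R_\vnu \subset \Lambda$, so this surviving index set is exactly $\mc R_\vnu = \{ \vi : \vi \le \vnu \}$. Invoking the tensorized telescoping identity $U_\vnu = \sum_{\vi \in \mc R_\vnu} \Delta_\vi$ recorded above, together with $U_\vnu H_\vnu = H_\vnu$ (each univariate factor $H_{\nu_m} \in \mc P_{\nu_m}$ is reproduced by $U_{\nu_m}$), gives $U_\Lambda H_\vnu = U_\vnu H_\vnu = H_\vnu$, and hence $U_\Lambda p = p$ on $\mc P_\Lambda$ by linearity.

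The ``in particular'' statement is the contrapositive of the same two ingredients: if $\vi \notin \Lambda$ then monotonicity forces $\vi \not\le \vnu$ for every $\vnu \in \Lambda$ (else $\vi \le \vnu \in \Lambda$ would imply $\vi \in \Lambda$), whence $\Delta_\vi H_\vnu = 0$ for all $\vnu \in \Lambda$ and therefore $\Delta_\vi p = 0$ for all $p \in \mc P_\Lambda$. I expect the only genuinely delicate point to be the careful justification of the tensorized vanishing $\Delta_\vi H_\vnu = 0$ for $\vi \not\le \vnu$: one must confirm that the formally infinite tensor product is well defined and that the trivial factors $\Delta_0 H_0 = H_0$ neither disrupt the product nor spuriously cancel the single vanishing factor. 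Everything else is routine bookkeeping with the partial order and the monotonicity of $\Lambda$.
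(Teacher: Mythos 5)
Your proof is correct and follows essentially the same route as the paper's: reduce to basis elements of $\mc P_\Lambda$ via Proposition~\ref{propo:montone_set}, establish the tensorized vanishing $\Delta_\vi$ applied to a basis element equals zero when $\vi \not\le \vnu$, use monotonicity to restrict the sum over $\Lambda$ to $\mc R_\vnu$, and identify $\sum_{\vi \in \mc R_\vnu} \Delta_\vi$ with the exact full tensor operator $U_\vnu$. The only (cosmetic) difference is that you work in the Hermite basis $\{H_\vnu\}$ while the paper uses the monomials $\{\vxi^\vnu\}$; both are legitimate by Proposition~\ref{propo:montone_set}, and the argument is otherwise identical.
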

\begin{proof}
Observe first that, for any $\vnu, \vi \in \mc F$ such that $\vi \not\leq \vnu$ we have
\[
	\Delta_\vi \vxi^\vnu 
	= 
	\prod_{m \in \bbN} \Delta_{i_m} \xi_m^{\nu_m} 
	=  
	\prod_{m \in \bbN} 
	\underbrace{(U_{i_m} - U_{i_m-1})\xi_m^{\nu_m}}_{= \xi_m^{\nu_m} - \xi_m^{\nu_m} \equiv 0  \text{ for at least one } m} 
	= 0.
\]
It suffices to prove the assertions for all monomials $\vxi^\vnu$ in $\mc P_\Lambda$.
For $\vnu \in \Lambda$ any $\vi \in \mc F \setminus \Lambda$ must satisfy $\vi\not\leq\vnu$ 
and therefore $\Delta_\vi \vxi^\vnu = 0$, proving the second assertion.
We conclude that
\begin{align*}
	U_\Lambda \vxi^\vnu
	& = \sum_{\vi \in \Lambda} \Delta_{\vi} \vxi^\vnu
	= \sum_{\vi \in \Lambda \cap \mc R_\vnu} \Delta_{\vi} \vxi^\vnu
	= \sum_{\vi \in \mc R_\vnu} \Delta_{\vi} \vxi^\vnu,
\end{align*}
where the third equality follows from the fact that $\mc R_\vnu \subseteq \Lambda$ for all $\vnu \in \Lambda$ due to the monotonicity of $\Lambda$. 
The proof concludes with
\begin{align*}
	U_\Lambda \vxi^\vnu 
	& = \sum_{\vi \in \mc R_\vnu} \Delta_{\vi} \vxi^\vnu 
	= \sum_{\vi \in \mc R_\vnu} \left(\prod_{m \in \bbN} \Delta_{i_m} \xi_m^{\nu_m} \right)
	= \prod_{m \in \bbN} \left(\sum_{i_m=0}^{\nu_m} \Delta_{i_m} \xi_m^{\nu_m}\right) 
	= \prod_{m \in \bbN} U_{\nu_m} \xi_m^{\nu_m}\\
	& = \prod_{m \in \bbN} \xi_m^{\nu_m}
	= \vxi^\vnu.
\end{align*}
Note that the third equality is obtained by rewriting a (finite) product of sums: since $\vnu \in \mc F$ there exists an $M \in \bbN$ such that $\nu_m = 0$ for $m>M$.
For such $m$ we have $\Delta_{i_m}^{\nu_m} \xi_m^{\nu_m} = \Delta_0 \xi_m^0 \equiv 1$ and therefore
\begin{align*}
	\prod_{m \in \bbN} \left( \sum_{i_m=0}^{\nu_m} \Delta_{i_m} \xi^{\nu_m}_m \right)
	&= 
	\left(\Delta_0 \xi^{\nu_1}_1 + \cdots + \Delta_{\nu_1} \xi^{\nu_1}_1 \right) 
	\cdots 
	\left(\Delta_0\xi^{\nu_M}_M + \cdots + \Delta_{\nu_M}\xi^{\nu_M}_M \right)\\
	&= 
	\sum_{\substack{\vi \in \bbN_0^M \\ i_m \leq \nu_m}} 
	\Delta_{i_1}\xi^{\nu_1}_1 \cdots  \Delta_{i_M} \xi^{\nu_M}_M
	= 
	\sum_{\vi \in \mc R_\vnu} \left(\prod_{m \in \bbN} \Delta_{i_m} \xi^{\nu_m}_m\right).
\end{align*}
\end{proof}

Proposition \ref{propo:U_Lambda} can be seen as an extension of 
\cite[Proposition 1]{back.nobile.eal:comparison} 
to general monotone multi-index sets as well as an extension of \cite[Theorem 6.1]{CohenDeVore2015} and \cite[Theorem 2.1]{ChkifaEtAl2014} to interpolation operators $U_i$ with non-nested node sets.
As mentioned in \cite[p. 89]{CohenDeVore2015}, if the set $\Lambda$ is not monotone then $U_\Lambda$ will not be exact on $\mc P_\Lambda$ in general.
However, the exactness on $\mc P_\Lambda$ is a crucial property in the subsequent convergence analysis and we therefore choose to work exclusively with monotone sets $\Lambda$.

%
%
\subsection{Sparse Grid Associated with $U_\Lambda$}

The construction of $U_\Lambda f$ for $f\colon \Gamma \to \bbR$ consists of a linear combination of tensor product interpolation operators requiring the evaluation of $f$ at certain multivariate nodes.
We shall refer to the collection of these nodes as the \emph{sparse grid} $\Xi_\Lambda \subset \Gamma$ associated with $\Lambda$.
For a monotone and finite set $\Lambda\subset \mc F$ there holds
\begin{equation}\label{equ:sparse_grid}
	\Xi_\Lambda = \bigcup_{\vi \in \Lambda} \Xi^{(\vi)},
\end{equation}
because for $\vi\in\mc F$ we have
\[
	\Delta_\vi f 
	= \Big[ \bigotimes_{m\geq 1} (U_{i_m} - U_{i_m-1})  \Big]f
	= \sum_{\vi - \boldsymbol 1 \leq \vk \leq \vi} (-1)^{|\vi-\vk|_1} \Big[\bigotimes_{m\geq 1} U_{k_m}\Big]f,
\]
i.e., for computing $\Delta_\vi f$ we need to evaluate $f$ at
\[
	\Xi^{(\vi),\Delta} := \bigcup_{\vi - \boldsymbol 1 \leq \vk \leq \vi} \Xi^{(\vk)}.
\] 
Since $\Lambda$ is a monotone set, the resulting sparse grid for $U_\Lambda = \sum_{\vi \in \Lambda} \Delta_\vi$ is 
\begin{align*}
    \Xi_\Lambda
    =
	\bigcup_{\vi \in \Lambda} \Xi^{(\vi),\Delta}
	& = 
	\bigcup_{\vi \in \Lambda}
	\,
	\bigcup_{\vi - \boldsymbol 1 \leq \vk \leq \vi} \Xi^{(\vk)}
	= \bigcup_{\vi \in \Lambda} \Xi^{(\vi)}.
\end{align*}
We remark that the unisolvence on $\mc P_\Lambda$ of point evaluation on $\Xi_\Lambda$ is discussed in \cite[Theorem 6.1]{CohenDeVore2015}.

%
%
\section{Convergence Analysis}\label{sec:conv}

In this section we analyze the error 
\[
	\|f - U_\Lambda f\|_{L^2_\mu}, \qquad f\colon \Gamma \to \mc H, 
\]
where $\|\cdot\|_{L^2_\mu}$ denotes the norm in $L^2_\mu(\bbR^\bbN; \mc H)$, $f$ is assumed to satisfy Assumption \ref{assum:f} and $\Lambda\subset \mc F$ is required to be monotone and finite.
Our first goal here is to establish a convergence rate $s>0$ for the error of $U_{\Lambda_N}$ 
for a nested sequence $\Lambda_N$ of monotone subsets of $\mc F$ with $|\Lambda_N| = N$, i.e.,
\begin{align}\label{equ:conv}
	\|f - U_{\Lambda_N} f\|_{L^2_\mu} \leq C N^{-s}, \qquad f\colon \Gamma \to \mc H,
\end{align}
where $C<\infty$ may depend on $f$ as well as the univariate nodal sets.
The line of proof we present here follows and builds upon the works \cite{Chen2016,HoangSchwab2014,BachmayrEtAl2015}.
We complement this convergence rate with a bound on the number of collocation points associated with a given multi-index set.

%
%

\subsection{General Convergence Results}\label{subsec:convGen}

The subsequent error analysis for the sparse collocation operator $U_\Lambda$ is based on the representation of multivariate functions $f \in L^2_\mu(\bbR^\bbN; \mc H)$ in the orthonormal basis of multivariate Hermite polynomials $H_\vnu$. 
We shall therefore examine the worst-case approximation error of any $U_\Lambda$ applied to a given multivariate Hermite basis polynomial $H_\vnu$.
To this end we define 
\begin{equation} \label{equ:c_nu}
	c_\vnu := \sup_{\Lambda \subset \mc F, |\Lambda|<\infty} \|(I - U_{\Lambda}) H_{\vnu}\|_{L^2_\mu},
	\qquad 
	\vnu \in \mc F.
\end{equation}
This quantity is finite since $\Delta_\vi H_\vnu = 0$ for $\vi \not \leq \vnu$ and hence
\[
	c_\vnu
	= 
	\max_{\Lambda \subseteq \mc R_\vnu} \|(I - U_{\Lambda}) H_{\vnu}\|_{L^2_\mu},
\]
where the maximum is taken over a finite set.
The quantities $c_\vnu$ also measure the deviation of the error of oblique projection $U_\Lambda$ from that of orthogonal projection, as these numbers would all be zero or one if $U_\Lambda$ is replaced with the $L^2_\mu$-orthogonal projection onto $\mc P_\Lambda$.
Moreover, we obtain the following bound:
\begin{propo} \label{thm:c_nu}
For all  $\vnu\in \mc F$ the quantity $c_\vnu$ defined in \eqref{equ:c_nu} satisfies
\[
	c_\vnu \leq  \sum_{\vi \in \mc R_\vnu} \|\Delta_\vi H_\vnu\|_{L^2_\mu}.
\]
In particular, if for the univariate Hermite polynomials there exists $\theta \geq 0$ and $K\geq1$ such that
\begin{align}\label{equ:Delta_i_bound}
	\|\Delta_i H_\nu\|_{L^2_\mu} \leq (1+ K\nu)^\theta
	\qquad \text{ for all }i \in\bbN_0,
\end{align}
where we have denoted the univariate Gaussian measure again by $\mu$, then
\begin{align} \label{equ:c_nu_bound}
	c_\vnu 
	& \leq 
	\prod_{m \in \bbN} (1 + K\nu_m)^{\theta+1},
	\qquad \vnu \in \mc F.
\end{align}
\end{propo}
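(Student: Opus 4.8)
The plan is to establish the two assertions separately: the general bound on $c_\vnu$ by a telescoping argument combined with the triangle inequality, and then the explicit product bound by exploiting the tensor-product structure of the measure and of the operators and polynomials involved.

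For the first inequality I would begin from the reduction already recorded before the statement, namely $c_\vnu = \max_{\Lambda \subseteq \mc R_\vnu}\|(I - U_\Lambda)H_\vnu\|_{L^2_\mu}$, so that it suffices to bound $\|(I - U_\Lambda)H_\vnu\|_{L^2_\mu}$ uniformly over subsets $\Lambda$ of the finite set $\mc R_\vnu$. The key observation is that the operator $U_{\mc R_\vnu} = \sum_{\vi \in \mc R_\vnu}\Delta_\vi$ reproduces $H_\vnu$: since $\mc R_\vnu$ is monotone and $H_\vnu \in \mc P_{\mc R_\vnu}$ by Proposition~\ref{propo:montone_set}, Proposition~\ref{propo:U_Lambda} yields $U_{\mc R_\vnu}H_\vnu = H_\vnu$. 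Since $\Lambda \subseteq \mc R_\vnu$, the error then collapses to $(I - U_\Lambda)H_\vnu = (U_{\mc R_\vnu} - U_\Lambda)H_\vnu = \sum_{\vi \in \mc R_\vnu \setminus \Lambda}\Delta_\vi H_\vnu$. Applying the triangle inequality and enlarging the index set from $\mc R_\vnu \setminus \Lambda$ to all of $\mc R_\vnu$ gives $\|(I - U_\Lambda)H_\vnu\|_{L^2_\mu} \leq \sum_{\vi \in \mc R_\vnu}\|\Delta_\vi H_\vnu\|_{L^2_\mu}$, and taking the maximum over $\Lambda$ proves the first claim.

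For the product bound, the core fact is that both $\mu = \bigotimes_{m \geq 1} N(0,1)$ and the function $\Delta_\vi H_\vnu = \prod_{m \geq 1}(\Delta_{i_m}H_{\nu_m})(\xi_m)$ factorize across coordinates, so the $L^2_\mu$ norm of a tensor product is the product of the univariate norms, $\|\Delta_\vi H_\vnu\|_{L^2_\mu} = \prod_{m \geq 1}\|\Delta_{i_m}H_{\nu_m}\|_{L^2_\mu}$, the product being effectively finite. Because $\mc R_\vnu$ is itself the Cartesian product of the ranges $0 \leq i_m \leq \nu_m$, the sum over $\mc R_\vnu$ then factorizes into univariate sums, $\sum_{\vi \in \mc R_\vnu}\|\Delta_\vi H_\vnu\|_{L^2_\mu} = \prod_{m \geq 1}\bigl(\sum_{i=0}^{\nu_m}\|\Delta_i H_{\nu_m}\|_{L^2_\mu}\bigr)$. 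I would then substitute the hypothesis \eqref{equ:Delta_i_bound}: each inner sum has $\nu_m + 1$ terms bounded by $(1 + K\nu_m)^\theta$, hence is at most $(\nu_m + 1)(1 + K\nu_m)^\theta$, and since $K \geq 1$ we have $\nu_m + 1 \leq 1 + K\nu_m$, so this is at most $(1 + K\nu_m)^{\theta+1}$; taking the product over $m$ yields \eqref{equ:c_nu_bound}.

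I do not anticipate a genuine obstacle here; the proof is essentially bookkeeping around the tensor structure. The only points deserving care are the justification of the reproduction identity $U_{\mc R_\vnu}H_\vnu = H_\vnu$ via the earlier propositions, and the factorization of the $L^2_\mu$ norm over the countable product, where one uses that all but finitely many univariate factors equal $\|\Delta_0 H_0\|_{L^2_\mu} = 1$ so that the infinite product is well defined.
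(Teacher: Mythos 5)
Your proposal is correct and follows essentially the same route as the paper: both rest on the reproduction identity $U_{\mc R_\vnu} H_\vnu = H_\vnu$ from Proposition~\ref{propo:U_Lambda}, the vanishing of $\Delta_\vi H_\vnu$ for $\vi \notin \mc R_\vnu$, and the triangle inequality, followed by the tensorization of the $L^2_\mu$-norm. The only cosmetic difference is in the second part, where you factorize the sum over $\mc R_\vnu$ into a product of univariate sums while the paper bounds each summand uniformly and multiplies by $|\mc R_\vnu| = \prod_{m}(1+\nu_m)$; the two computations coincide term by term.
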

\begin{proof}
In view of Proposition \ref{propo:U_Lambda} we have $H_\vnu = U_{\vnu}H_\vnu = \sum_{\vi\in\mc R_\vnu} \Delta_\vi H_\vnu$ 
and, particularly, $\Delta_\vi H_\vnu = 0$ for $\vi \not \in \mc R_\vnu$, since $H_\vnu \in \mc P_{\mc R_\vnu}$.
Therefore
\begin{align*}
	(I - U_{\Lambda}) H_{\vnu}
	&= 
	\sum_{\vi \in \mc R_\vnu} \Delta_i H_\vnu - \sum_{\vi \in \Lambda} \Delta_i H_\vnu 
	= 
	\sum_{\vi \in \mc R_\vnu} \Delta_i H_\vnu - \sum_{\vi \in \Lambda\cap \mc R_\vnu} \Delta_i H_\vnu\\
	& = 
	\sum_{\vi \in \mc R_\vnu\setminus \Lambda} \Delta_i H_\vnu,
\end{align*}
giving 
\begin{align*}
	c_\vnu 
	&= 
	\max_{\Lambda \subseteq \mc R_\vnu} \|(I - U_{\Lambda}) H_{\vnu}\|_{L^2_\mu}
	\leq 
	\max_{\Lambda \subseteq \mc R_\vnu} \sum_{\vi \in \mc R_\vnu\setminus \Lambda} \|\Delta_i H_\vnu\|_{L^2_\mu}
	\leq 
    \sum_{\vi \in \mc R_\vnu} \|\Delta_i H_\vnu\|_{L^2_\mu}.
\end{align*}
Moreover, if \eqref{equ:Delta_i_bound} holds, then
\begin{align*}
	c_\vnu 
	& \leq \sum_{\vi \in \mc R_\vnu} \|\Delta_\vi H_\vnu\|_{L^2_\mu}
	= 
	\sum_{\vi \in \mc R_\vnu} \prod_{m \in \bbN} \|\Delta_{i_m} H_{\nu_m}\|_{L^2_\mu}
	\leq 
	\sum_{\vi \in \mc R_\vnu} \prod_{m \in \bbN} (1+ K\nu_m)^\theta \\
	&= 
	|\mc R_\vnu| \prod_{m \in \bbN} (1+ K\nu_m)^\theta 
	\leq 
	\prod_{m \in \bbN} (1 + K\nu_m)^{\theta+1}.
\end{align*}
where we have used \eqref{eq:card_of_R_nu} and $K \geq 1$ in the last inequality.
\end{proof}

\begin{rem}\label{rem:c_nu}
Bounds such as \eqref{equ:Delta_i_bound} can often be found in the sparse collocation or sparse quadrature literature, e.g., for quadrature operators applied to Hermite polynomials \cite{Chen2016}, norms of quadrature operators on bounded domains \cite{SchillingsSchwab2013} or Lebesgue constants for Leja points \cite{ChkifaEtAl2015}. Numerical estimates for the specific case of Genz-Keister points
have been provided in \cite{back.nobile.eal:lognormal}. 
\end{rem}

The following lemma provides a natural starting point for bounding the approximation error of $U_\Lambda f$ for monotone subsets $\Lambda$.
The proof follows the same line of argument as the proof of \cite[Lemma 3.2]{Chen2016}. 

\begin{lem}[{cf.\ \cite[Lemma 3.2]{Chen2016}}] \label{lem:I-U}
For a finite and monotone subset $\Lambda \subset \mc F$ there holds
\begin{equation}
	\left\| f - U_{\Lambda} f \right\|_{L^2_\mu} 
	\leq 
	\sum_{\vnu \in \mc F\setminus \Lambda} c_{\vnu} \|f_\vnu\|_{\mc H}.
\end{equation}
\end{lem}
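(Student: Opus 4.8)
The plan is to expand $f$ in the multivariate Hermite basis, apply $I - U_\Lambda$ termwise, and bound each resulting term by the scalar quantity $c_\vnu$. Concretely, starting from the expansion $f = \sum_{\vnu \in \mc F} f_\vnu H_\vnu$ of \eqref{equ:f_PCE} and the linearity of $U_\Lambda$, the goal is to establish the key identity
\begin{equation*}
	f - U_\Lambda f = \sum_{\vnu \in \mc F \setminus \Lambda} f_\vnu \, (I - U_\Lambda) H_\vnu .
\end{equation*}
The terms indexed by $\vnu \in \Lambda$ drop out: by Proposition~\ref{propo:montone_set} the monotonicity of $\Lambda$ gives $H_\vnu \in \mc P_\Lambda$ for every $\vnu \in \Lambda$, whence Proposition~\ref{propo:U_Lambda} yields $U_\Lambda H_\vnu = H_\vnu$, i.e.\ $(I - U_\Lambda) H_\vnu = 0$.

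Granting this identity, the remaining steps are routine. First I would apply the triangle inequality in $L^2_\mu(\bbR^\bbN;\mc H)$ to pass from the norm of the series to the series of norms. Then, for each fixed $\vnu$, the factor $(I - U_\Lambda) H_\vnu$ is scalar-valued while $f_\vnu \in \mc H$ is a constant vector, so the two factors separate:
\begin{equation*}
	\|f_\vnu \, (I - U_\Lambda) H_\vnu\|_{L^2_\mu}
	= \|f_\vnu\|_{\mc H} \, \|(I - U_\Lambda) H_\vnu\|_{L^2_\mu}
	\leq c_\vnu \, \|f_\vnu\|_{\mc H},
\end{equation*}
the last inequality being exactly the definition \eqref{equ:c_nu} of $c_\vnu$ as a supremum over finite index sets. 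Summing over $\vnu \in \mc F \setminus \Lambda$ gives the claim.

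The main obstacle is justifying the termwise identity itself, since $U_\Lambda$ acts through point evaluations at the sparse grid $\Xi_\Lambda$ and is therefore \emph{not} continuous on $L^2_\mu$; one cannot simply interchange $U_\Lambda$ with the $L^2_\mu$-convergent Hermite series. I would handle this by truncation: choose an increasing sequence $\Theta_1 \subseteq \Theta_2 \subseteq \cdots$ of finite monotone sets with $\bigcup_n \Theta_n = \mc F$ and $\Lambda \subseteq \Theta_1$, and apply the argument above to the partial sums $P_n f := \sum_{\vnu \in \Theta_n} f_\vnu H_\vnu$, which are genuine polynomials on which $U_\Lambda$ acts unambiguously and termwise, yielding $\|(I - U_\Lambda) P_n f\|_{L^2_\mu} \leq \sum_{\vnu \in \Theta_n \setminus \Lambda} c_\vnu \|f_\vnu\|_{\mc H} \leq S$, where $S := \sum_{\vnu \in \mc F \setminus \Lambda} c_\vnu \|f_\vnu\|_{\mc H}$. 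If $S = \infty$ the bound is trivial, so I may assume $S < \infty$; then $\sum_{\vnu \notin \Lambda} f_\vnu (I - U_\Lambda) H_\vnu$ converges absolutely in $L^2_\mu$ (its tail is dominated by the tail of $S$), with partial sums equal to $(I - U_\Lambda) P_n f$ and limiting norm at most $S$. The delicate point is identifying this limit with $f - U_\Lambda f$: since $P_n f \to f$ in $L^2_\mu$, this reduces to $U_\Lambda P_n f \to U_\Lambda f$, i.e.\ to convergence of the point values $P_n f(\vxi) \to f(\vxi)$ at the finitely many grid points $\vxi \in \Xi_\Lambda$. This is where the regularity available in the present setting, beyond bare $L^2_\mu$-membership, must be invoked, exactly as in the proof of \cite[Lemma~3.2]{Chen2016}; with this convergence in hand the limiting identity, and hence the lemma, follows.
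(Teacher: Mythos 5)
Your proposal follows essentially the same route as the paper's proof: expand $f$ in the Hermite basis, use the exactness $U_\Lambda H_\vnu = H_\vnu$ for $\vnu \in \Lambda$ (Proposition~\ref{propo:U_Lambda}) to discard those terms, then apply the triangle inequality and the definition of $c_\vnu$ termwise. The one difference is that the paper simply writes $\| f - U_\Lambda f\|_{L^2_\mu} = \| \sum_{\vnu} f_\vnu (I-U_\Lambda) H_\vnu\|_{L^2_\mu}$ without comment, whereas you correctly flag that interchanging the point-evaluation operator $U_\Lambda$ with the $L^2_\mu$-convergent series requires justification (pointwise convergence of the truncated expansions at the finitely many grid points), and you supply a truncation argument for it; this is a legitimate refinement of the same proof rather than a different approach.
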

\begin{proof}
Due to the monotonicity of $\Lambda$ we can apply Proposition \ref{propo:U_Lambda} and obtain
\begin{align*}
	\left\| f - U_{\Lambda} f \right\|_{L^2_\mu} 
	&= 
	\biggl\| \sum_{\vnu \in \mc F} f_{\vnu} \, (I - U_{\Lambda}) H_{\vnu} (\vxi) \biggr\|_{L^2_\mu}
	= 
	\biggl\| 
	\sum_{\vnu \in \mc F\setminus \Lambda} f_{\vnu} \, (I - U_{\Lambda}) H_{\vnu} (\vxi) 
	\biggr\|_{L^2_\mu} \\
	&\leq  
	\sum_{\vnu \in \mc F\setminus \Lambda} 
	\|f_{\vnu}\|_{\mc H} \|(I - U_{\Lambda}) H_{\vnu}\|_{L^2_\mu}
	\leq 
	\sum_{\vnu \in \mc F\setminus \Lambda} c_{\vnu} \|f_{\vnu}\|_{\mc H}.
\end{align*}
\end{proof}

Building on Lemma~\ref{lem:I-U} the approximation error $\| f - U_{\Lambda} f \|_{L^2_\mu}$ may be further bounded given summability results for the sequence $(c_\vnu\|f_{\vnu}\|_{\mc H})_{\vnu\in\mc F}$.
The key result here is known as \emph{Stechkin's lemma} which provides a decay rate for the $\ell^q$-tail of an $p$-summable sequence for $q>p$ and is due to Stechkin \cite{Stechkin1955} for $q=2$ (cf.\ also \cite[Lemma 3.6]{CohenDeVore2015}).

\begin{lem}[Stechkin] 
\label{lem:Stechkin}
Let $0<p<q<\infty$ and let 
\[
	(a_\vnu)_{\vnu\in\mc F} \in \ell^p(\mc F)
	:= 
	\left\{ (b_{\vnu})_{\vnu\in\mc F} \colon \sum_{\vnu\in\mc F} |b_\vnu|^p < \infty \right\}
\]
be a sequence of nonnegative numbers.
Then for $\Lambda_N$ denoting the set of multi-indices $\vnu$ corresponding to the $N$ largest elements $a_\vnu$, there holds
\begin{equation} \label{eq:stechkin}
	\Biggl(\sum_{\vnu\notin \Lambda_N} a^q_\vnu\Biggr)^{1/q} 
	\leq 
	\|(a_\vnu)_{\vnu\in\mc F}\|_{\ell^p} (N+1)^{-s}, \qquad s = \frac 1p - \frac 1q~.
\end{equation}
\end{lem}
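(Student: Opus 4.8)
The plan is to reduce the statement to a tail bound on the non-increasing rearrangement of the sequence. Since $\mc F$ is countable and $(a_\vnu)_{\vnu\in\mc F}$ is $p$-summable (so that all but countably many terms vanish and the terms tend to zero), let $(a_n^*)_{n\geq 1}$ denote its non-increasing rearrangement, with $a_1^* \geq a_2^* \geq \cdots \geq 0$. By the very definition of $\Lambda_N$ as the index set of the $N$ largest elements, the left-hand side is exactly the tail of this rearranged sequence,
\[
   \sum_{\vnu\notin\Lambda_N} a_\vnu^q = \sum_{n>N} (a_n^*)^q,
\]
and this identification is unambiguous even if the threshold value is attained with multiplicity, since any tie-breaking choice of $\Lambda_N$ produces the same sum.

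The key step is a pointwise bound on the $(N+1)$-st largest element. As $a_1^*,\dots,a_{N+1}^*$ are the $N+1$ largest terms and each is at least $a_{N+1}^*$, monotonicity gives
\[
   (N+1)\,(a_{N+1}^*)^p \leq \sum_{n=1}^{N+1}(a_n^*)^p \leq \sum_{\vnu\in\mc F} a_\vnu^p = \|(a_\vnu)_{\vnu\in\mc F}\|_{\ell^p}^p,
\]
and hence $a_{N+1}^* \leq (N+1)^{-1/p}\,\|(a_\vnu)_{\vnu\in\mc F}\|_{\ell^p}$. This single inequality is the load-bearing estimate of the whole argument.

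Finally I would exploit $q>p$ to split the tail. Writing $(a_n^*)^q = (a_n^*)^{q-p}(a_n^*)^p$ and using $a_n^* \leq a_{N+1}^*$ for every $n>N$, I factor out the largest remaining term and bound the residual $p$-sum by the full $\ell^p$-norm:
\[
   \sum_{n>N}(a_n^*)^q \leq (a_{N+1}^*)^{q-p}\sum_{n>N}(a_n^*)^p \leq (a_{N+1}^*)^{q-p}\,\|(a_\vnu)_{\vnu\in\mc F}\|_{\ell^p}^p.
\]
Inserting the threshold bound from the previous step yields $\sum_{n>N}(a_n^*)^q \leq (N+1)^{-(q-p)/p}\,\|(a_\vnu)_{\vnu\in\mc F}\|_{\ell^p}^{\,q}$, and taking $q$-th roots gives the claim with exponent $\tfrac{1}{q}\cdot\tfrac{q-p}{p} = \tfrac1p-\tfrac1q = s$. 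There is no genuine obstacle here, as the proof is entirely self-contained and invokes no earlier result; the only points requiring care are the clean passage to the rearranged sequence (so that ``the $N$ largest elements'' is well defined under ties) and the bookkeeping of the exponent $s$.
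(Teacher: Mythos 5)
Your proof is correct, and it is the standard argument for Stechkin's lemma: pass to the non-increasing rearrangement, bound the $(N+1)$-st largest term by $(N+1)^{-1/p}\|(a_\vnu)\|_{\ell^p}$, and split $(a_n^*)^q=(a_n^*)^{q-p}(a_n^*)^p$ in the tail; the exponent bookkeeping $\frac{q-p}{pq}=\frac1p-\frac1q$ checks out. The paper itself gives no proof of this lemma, deferring to Stechkin's original work and to Lemma~3.6 of Cohen--DeVore, so there is nothing to compare against; your self-contained derivation, including the remark that the tail sum is unambiguous under ties at the threshold, is exactly what those references contain.
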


The index sets $\Lambda_N$ in Stechkin's lemma associated with the $N$ largest sequence elements are not necessarily monotone and, therefore Lemma \ref{lem:I-U} and Lemma \ref{lem:Stechkin} can not be combined to bound the error without additional assumptions.
An obvious way to ensure monotonicity of the sets $\Lambda_N$ in Stechkin's lemma is to assume the sequence $(a_\vnu)$ to be \emph{nonincreasing}, i.e.,
\[
	\vnu \leq \tilde \vnu 
	\quad \Rightarrow \quad 
	a_\vnu \geq a_{\tilde \vnu}.
\]
This leads to 

\begin{theo} \label{theo:conv1}
Let Assumptions \ref{assum:f} and \ref{assum:Xi} be satisfied and let there exist a nonincreasing sequence $(\hat c_{\vnu})_{\vnu \in \mc F}\in \ell^p(\mc F)$ with $p\in(0,1)$ such that
\begin{equation*}
	c_{\vnu} \|f_{\vnu}\|_{\mc H} \leq \hat c_{\vnu} \qquad \forall \vnu \in \mc F.
\end{equation*}
Then there exists a nested sequence $(\Lambda_N)_{N\in\bbN}$ of finite and monotone subsets $\Lambda_N\subset \mc F$ with $|\Lambda_N|=N$ such that \eqref{equ:conv} holds with rate $s = 1/p - 1$.
\end{theo}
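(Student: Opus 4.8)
The plan is to combine the error bound of Lemma \ref{lem:I-U} with Stechkin's lemma (Lemma \ref{lem:Stechkin}), exploiting that the nonincreasing hypothesis on $(\hat c_\vnu)$ forces monotonicity onto the index sets that Stechkin selects. First I would invoke Lemma \ref{lem:I-U} together with the pointwise domination $c_\vnu\|f_\vnu\|_{\mc H}\le\hat c_\vnu$ to obtain, for any finite monotone $\Lambda\subset\mc F$,
\[
  \left\| f - U_\Lambda f \right\|_{L^2_\mu}
  \le \sum_{\vnu \in \mc F \setminus \Lambda} c_\vnu \|f_\vnu\|_{\mc H}
  \le \sum_{\vnu \in \mc F \setminus \Lambda} \hat c_\vnu,
\]
so that it suffices to exhibit a nested family $(\Lambda_N)$ of finite monotone sets with $|\Lambda_N|=N$ for which the remaining tail $\sum_{\vnu\notin\Lambda_N}\hat c_\vnu$ decays like $N^{-s}$.

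The heart of the argument is to show that the Stechkin-optimal choice---the indices of the $N$ largest $\hat c_\vnu$---can be realized by a single nested family of \emph{monotone} sets. I would fix an enumeration $(\vnu^{(j)})_{j\ge1}$ of $\mc F$ that is simultaneously (i) nonincreasing in value, $\hat c_{\vnu^{(1)}} \ge \hat c_{\vnu^{(2)}} \ge \cdots$, and (ii) a linear extension of the partial order, i.e.\ $\tilde\vnu < \vnu$ forces $\tilde\vnu$ to precede $\vnu$. Such an enumeration exists because the two requirements are compatible: the nonincreasing hypothesis gives $\tilde\vnu \le \vnu \Rightarrow \hat c_{\tilde\vnu} \ge \hat c_\vnu$, so sorting by decreasing value never contradicts $\le$, and the only remaining ambiguities occur among indices sharing a common value, within each of which one may break ties by any linear extension of $\le$. (That each positive value is attained only finitely often follows from $\ell^p$-summability, so the sort is well defined.) Setting $\Lambda_N := \{\vnu^{(1)},\dots,\vnu^{(N)}\}$ then produces sets that are nested and of cardinality $N$ by construction, and monotone precisely because property (ii) places every $\tilde\vnu \le \vnu^{(k)}$ at an index $\le k \le N$.

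Finally, since $\Lambda_N$ indexes the $N$ largest entries of the nonnegative sequence $a_\vnu := \hat c_\vnu \in \ell^p(\mc F)$, I would apply Lemma \ref{lem:Stechkin} with $q=1$ to obtain
\[
  \sum_{\vnu \notin \Lambda_N} \hat c_\vnu
  \le \|(\hat c_\vnu)_{\vnu\in\mc F}\|_{\ell^p}\,(N+1)^{-s},
  \qquad s = \frac1p - 1 > 0,
\]
and, since $(N+1)^{-s}\le N^{-s}$, chaining this with the first display yields \eqref{equ:conv} with $C=\|(\hat c_\vnu)_{\vnu\in\mc F}\|_{\ell^p}$ and the claimed rate $s=1/p-1$. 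The main obstacle is the second step: confirming that the greedy Stechkin selection and the downward-closedness required by Lemma \ref{lem:I-U} are not in conflict. Everything hinges on the nonincreasing hypothesis, which is exactly what reconciles the value ordering used by Stechkin with the partial-order requirement; the tie-handling is the only genuinely delicate point, and it is there that $\ell^p$-summability (finiteness of the level sets of positive values) is quietly used.
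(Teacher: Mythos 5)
Your proposal is correct and follows essentially the same route as the paper: take $\Lambda_N$ to be the indices of the $N$ largest $\hat c_\vnu$, observe that the nonincreasing hypothesis makes these sets monotone and allows them to be chosen nested, and then chain Lemma \ref{lem:I-U} with Stechkin's lemma for $q=1$. The only difference is that you spell out the tie-breaking/linear-extension argument that the paper leaves implicit in the sentence ``each $\Lambda_N$ is monotone and the sequence can be chosen to be nested,'' which is a welcome but not substantively different elaboration.
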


We will provide a proof below.
The convergence analysis in \cite{ChkifaEtAl2015,SchillingsSchwab2013} for sparse quadrature and interpolation in case of bounded $\Gamma$ follows Theorem~\ref{theo:conv1}, although sometimes hidden in the details.
There the authors employ explicit bounds on the norms of the Legendre or Taylor coefficients of $f:\Gamma \to \mc H$ to construct a dominating and nonincreasing sequence $(\hat c_{\vnu})_{\vnu \in \mc F}\in \ell^p(\mc F)$, $p\in(0,1)$.

In our setting it is, however, not always possible to derive explicit bounds on the norm of the Hermite coefficients $\|f_\vnu\|_{\mc H}$.
In \cite{BachmayrEtAl2015} a technique was developed which relies on somewhat implicit bounds on $\|f_\vnu\|_{\mc H}$ via a weighted $\ell^2$-summability property.
We adapt this approach to the current setting in

\begin{theo} \label{theo:conv2}
Let Assumptions \ref{assum:f} and \ref{assum:Xi} be satisfied and let there exist a sequence $(b_{\vnu})_{\vnu \in \mc F}$ of positive numbers such that
\begin{equation} \label{equ:sum_b_nu_f_nu}
	\sum_{\vnu \in \mc F} b_{\vnu} \|f_{\vnu}\|^2_{\mc H} < \infty
\end{equation}
as well as another nonincreasing sequence $(\hat c_{\vnu})_{\vnu \in \mc F} \in \ell^p(\mc F)$, $p\in(0,2)$, for which 
\[
	 \frac{c_{\vnu}}{b^{1/2}_\vnu} \leq \hat c_{\vnu} \qquad \forall \vnu\in\mc F.
\]
Then there exists a nested sequence $(\Lambda_N)_{N\in\bbN}$ of finite and monotone subsets $\Lambda_N\subset \mc F$ with $|\Lambda_N|=N$ such that \eqref{equ:conv} holds with rate $s = 1/p - 1/2$.
\end{theo}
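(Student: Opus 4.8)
The plan is to reduce the claim to an application of Stechkin's lemma (Lemma~\ref{lem:Stechkin}) combined with the basic error bound of Lemma~\ref{lem:I-U}, mirroring the strategy of Theorem~\ref{theo:conv1} but now splitting the summand $c_\vnu\|f_\vnu\|_{\mc H}$ through the auxiliary weights $b_\vnu$. First I would start from Lemma~\ref{lem:I-U}, which gives
\[
	\|f - U_{\Lambda}f\|_{L^2_\mu} \leq \sum_{\vnu\in\mc F\setminus\Lambda} c_\vnu \|f_\vnu\|_{\mc H}
\]
for any finite monotone $\Lambda$. The key algebraic step is to factor each summand as
\[
	c_\vnu \|f_\vnu\|_{\mc H} = \frac{c_\vnu}{b_\vnu^{1/2}}\cdot\bigl(b_\vnu^{1/2}\|f_\vnu\|_{\mc H}\bigr),
\]
and then apply the Cauchy--Schwarz inequality over the tail $\mc F\setminus\Lambda$. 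This yields
\[
	\sum_{\vnu\in\mc F\setminus\Lambda} c_\vnu\|f_\vnu\|_{\mc H}
	\leq
	\Biggl(\sum_{\vnu\in\mc F\setminus\Lambda} \frac{c_\vnu^2}{b_\vnu}\Biggr)^{1/2}
	\Biggl(\sum_{\vnu\in\mc F\setminus\Lambda} b_\vnu\|f_\vnu\|_{\mc H}^2\Biggr)^{1/2}.
\]
The second factor is bounded by the finite constant $\bigl(\sum_{\vnu\in\mc F} b_\vnu\|f_\vnu\|_{\mc H}^2\bigr)^{1/2}$ coming from hypothesis~\eqref{equ:sum_b_nu_f_nu}; crucially, dropping to the full sum here only enlarges the bound and frees us from tracking the tail in this factor. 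It is the \emph{first} factor that must decay in $N$, and this is where Stechkin enters.

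The main technical point is to control the first factor using the decay of the $\ell^p$-summable dominating sequence. By hypothesis $c_\vnu/b_\vnu^{1/2}\leq \hat c_\vnu$, so $c_\vnu^2/b_\vnu \leq \hat c_\vnu^2$, and hence
\[
	\Biggl(\sum_{\vnu\in\mc F\setminus\Lambda} \frac{c_\vnu^2}{b_\vnu}\Biggr)^{1/2}
	\leq
	\Biggl(\sum_{\vnu\in\mc F\setminus\Lambda} \hat c_\vnu^2\Biggr)^{1/2}.
\]
I would now invoke Stechkin's lemma with exponents $p\in(0,2)$ and $q=2$ applied to the sequence $(\hat c_\vnu)\in\ell^p(\mc F)$. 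Because $(\hat c_\vnu)$ is \emph{nonincreasing} with respect to the partial order on $\mc F$, the Stechkin index set $\Lambda_N$ selecting the $N$ largest values $\hat c_\vnu$ is automatically monotone (and one may take these sets nested in $N$), so the bound of Lemma~\ref{lem:I-U} legitimately applies to $\Lambda=\Lambda_N$. Stechkin then gives the $\ell^2$-tail bound $\bigl(\sum_{\vnu\notin\Lambda_N}\hat c_\vnu^2\bigr)^{1/2} \leq \|(\hat c_\vnu)\|_{\ell^p}(N+1)^{-s}$ with $s = 1/p - 1/2$, which is exactly the target rate.

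Assembling the three estimates produces
\[
	\|f - U_{\Lambda_N}f\|_{L^2_\mu}
	\leq
	\|(\hat c_\vnu)\|_{\ell^p}\Biggl(\sum_{\vnu\in\mc F} b_\vnu\|f_\vnu\|_{\mc H}^2\Biggr)^{1/2}(N+1)^{-s},
\]
which is~\eqref{equ:conv} with $s=1/p-1/2$ and a constant $C$ depending on $f$ (through both $\|(\hat c_\vnu)\|_{\ell^p}$ and the weighted second moment) and on the univariate nodes (through the $c_\vnu$ that define $\hat c_\vnu$). I expect the only genuinely delicate point to be the \textbf{monotonicity bookkeeping}: one must verify that the sets $\Lambda_N$ returned by Stechkin --- selecting the indices of the $N$ largest $\hat c_\vnu$ --- are simultaneously monotone and nested, and that the error bound of Lemma~\ref{lem:I-U} is only valid because of this monotonicity. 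The nonincreasing hypothesis on $(\hat c_\vnu)$ is precisely what guarantees this, exactly as in the passage preceding Theorem~\ref{theo:conv1}; the Cauchy--Schwarz splitting and the application of Stechkin with $q=2$ are then routine.
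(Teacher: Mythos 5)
Your proposal is correct and follows essentially the same route as the paper's proof: Lemma~\ref{lem:I-U}, the factorization $c_\vnu\|f_\vnu\|_{\mc H} = (c_\vnu/b_\vnu^{1/2})(b_\vnu^{1/2}\|f_\vnu\|_{\mc H})$, Cauchy--Schwarz over the tail, and Stechkin's lemma with $q=2$ applied to the nonincreasing dominating sequence, whose monotonicity guarantees that the sets $\Lambda_N$ are monotone and can be taken nested. No gaps.
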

\begin{proof}[Proof of Theorem \ref{theo:conv1} and Theorem \ref{theo:conv2}]
Let $\Lambda_N$ be the set of multi-indices $\vnu$ corresponding to the $N$ largest elements of $(\hat c_{\vnu})_{\vnu \in \mc F}$.
Then each $\Lambda_N$ is monotone and the sequence $(\Lambda_N)_{N\in\bbN}$ can be chosen to be nested.

If the assumption of Theorem \ref{theo:conv1} hold, we can apply Lemma \ref{lem:I-U} and Stechkin's lemma with $q=1>p$ to obtain
\begin{align*}
	\left\| f - U_{\Lambda_N} f \right\|_{L^2_\mu} 
	&\
	\leq \sum_{\vnu \in \mc F\setminus \Lambda_N} c_{\vnu} \|f_{\vnu}\|_{\mc H}
	\leq 
	\sum_{\vnu \in \mc F\setminus \Lambda_N} \hat c_{\vnu}
	\leq 
	C(N+1)^{-(1/p-1)}
\end{align*}
where $C = \|(\hat c_\vnu)_{\vnu\in\mc F}\|_{\ell^p}$.

If the assumptions of Theorem \ref{theo:conv2} hold, Lemma~\ref{lem:I-U} combined with the Cauchy-Schwarz inequality and Stechkin's lemma for $q=2>p$ give 
\begin{align*}
	\left\| f - U_{\Lambda_N} f \right\|_{L^2_\mu} 
	&\leq 
	\sum_{\vnu \in \mc F\setminus \Lambda_N} c_{\vnu} \|f_{\vnu}\|_{\mc H}
	= \sum_{\vnu \in \mc F\setminus \Lambda_N} \left( \frac{c_{\vnu}}{b_\vnu^{1/2}}\right) \, \left( b_\vnu^{1/2} \|f_{\vnu}\|_{\mc H} \right)\\
	& 
	\leq 
	\left( \sum_{\vnu \in \mc F\setminus \Lambda_N} b_{\vnu} \|f_{\vnu}\|^2_{\mc H} \right)^{1/2}
	\cdot
	\left( \sum_{\vnu \in \mc F\setminus \Lambda_N} \frac {c^2_{\vnu}}{b_\vnu} \right)^{1/2} \\
	&\leq 
	\left( \sum_{\vnu \in \mc F} b_{\vnu} \|f_{\vnu}\|^2_{\mc H} \right)^{1/2} 
	\cdot 
	\left( \sum_{\vnu \in \mc F\setminus \Lambda_N} \hat c^2_{\vnu}\right)^{1/2} \\
	&\leq 
	C (N+1)^{-(1/p-1/2)},
\end{align*}
where now $C = \| (b^{1/2}_\vnu\|f_\vnu\|)_{\vnu\in\mc F}\|_{\ell^2} \cdot \|(\hat c_\vnu)_{\vnu\in\mc F}\|_{\ell^p}$, respectively.
\end{proof}

\begin{rem}
Another application of the weighted $\ell^2$-summability property \eqref{equ:sum_b_nu_f_nu} is the analysis of sparse quadrature given in \cite{Chen2016}, where the author employs the slightly different estimate
\begin{align*}
	\sum_{\vnu \in \mc F\setminus \Lambda_N} c_{\vnu} \|f_{\vnu}\|_{\mc H}
	& \leq
	\sup_{\vnu\in\mc F \setminus \Lambda_N} b_\vnu^{q - 1/2} 
	\sum_{\vnu \in \mc F\setminus \Lambda_N} 
	\frac{c_{\vnu}}{b_\vnu^{-q}} \, b_\vnu^{1/2} \|f_{\vnu}\|_{\mc H}.
\end{align*}
After showing that the series on the right is bounded and applying Stechkin's lemma to $(b^{q-1/2}_\vnu)_{\vnu\in\mc F}$, this yields the same convergence rate as stated in Theorem \ref{theo:conv2}.
\end{rem}

\begin{rem}
We mention that sparse collocation attains a smaller convergence rate than 
best $N$-term approximation in case the assumptions of Theorem \ref{theo:conv2} hold.
Namely, under these assumptions the best $N$-term rate is $s= \frac 1p$, see \cite[Theorem 1.2]{BachmayrEtAl2015}.
This reduced convergence rate is not caused by the additional factors $c_\vnu$ in the error analysis of sparse collocation.
The reason for the slower rate is missing orthogonality: 
in the proof of Lemma \ref{lem:I-U} we could not apply Parseval's identity and had to use the triangle inequality to bound the error.
This led to bounds in terms of $\|f_\vnu\|_{\mc H}$ rather than $\|f_\vnu\|^2_{\mc H}$ as in the case of orthogonal projections, e.g., best $N$-term approximations.
\end{rem}

We emphasize that the construction of such a nonincreasing, $p$-summable dominating sequence is by no means trivial.
Without the first property we can not conclude that the multi-index sets $\Lambda_N$ occurring in Stechkin's lemma are monotone, which in turn is needed to use Lemma \ref{lem:I-U} as the starting point of our error analysis.
Of course, we could consider monotone envelopes $\Lambda_N\subset \tilde \Lambda_N$ of $\Lambda_N$, but their size can grow quite rapidly with $N$ (e.g., polynomially or even faster, see counterexample below).
Moreover, it is not at all obvious that for a sequence $(a_\vnu)_{\vnu\in\mc F}\in\ell^p(\mc F)$ there exists a dominating and nonincreasing $(\hat a_\vnu)_{\vnu\in\mc F}\in\ell^p(\mc F)$.
In particular, we provide the following counterexample: let $\mc F = \bbN$ and define $a_n$, $n\in\bbN$ by
\[
	a_n = \begin{cases} \frac 1{m^2}, & n = \sum^m_{k=1} k, \\ 0, & \text{otherwise}, \end{cases}
\]
i.e., $a_1 = 1, a_2 = 0, a_3 = \frac 14, a_4 = 0, a_5 = 0, a_6 = \frac 19, a_7 = 0, \ldots, a_9=0, a_{10} = \frac 1{16}, a_{11} = 0, \ldots$~.
Then $(a_n)_{n \in \bbN} \in \ell^1(\bbN)$.
The smallest positive nonincreasing sequence $(\hat a_n)_{n\in\bbN}$ dominating $(a_n)_{n\in\bbN}$ is given by $\hat a_n := \sup_{m \geq n} |a_{m}|$, see \cite[Section 3.8]{CohenDeVore2015}.
In our case, we get 
\[
	\hat a_n = \frac 1{m^2} \qquad \text{for each $n$ such that} \qquad 1+ \sum^{m-1}_{k=1} k \leq n \leq \sum^m_{k=1} k
\]
and, thus, 
\[
	\sum_{n=1}^\infty |\hat a_n| = \sum_{m=1}^\infty m\frac 1{m^2} = \infty.
\]
Although the example is somewhat pathological, it illustrates that for $(a_\vnu)\in\ell^p(\mc F)$  a $p$-summable nonincreasing dominating sequence need not exist.
  
%
%

\subsection{Sufficient Conditions for Weighted Summability and Majorization}\label{subsec:convSG}
We now follow the strategy of Theorem \ref{theo:conv2} and study under which requirements the assumptions of Theorem \ref{theo:conv2} hold.
To this end we recall a result from \cite{BachmayrEtAl2015} for weighted $\ell^2$-summability of Hermite coefficients $\|f_\vnu\|_{\mc H}$ given the following smoothness conditions on $f$:

\begin{assum} \label{assum:df}
Let $f$ satisfy Assumption \ref{assum:f}. 
There exists an integer $r\in\bbN_0$ and a sequence of positive numbers $(\tau^{-1}_m)_{m\in\bbN} \in \ell^p(\bbN)$, $p\in(0,2)$, such that
\begin{enumerate}[(a)]
\item 
for any $\valpha\in\mc F$ with $|\valpha|_\infty \leq r$ the (weak) partial derivative $\partial^\valpha f$ exists and satisfies $\partial^\valpha f \in L^2_\mu(\bbR^\bbN; \mc H)$, \item
there holds
\begin{equation} \label{equ:bound_partial_weighted}
	\sum_{|\valpha|_\infty \leq r} \frac{\vtau^{2\valpha}}{\valpha!} \|\partial^\valpha f\|^2_{L^2_\mu} <\infty,
\end{equation}
where $\vtau^{\valpha} = \prod_{m=1}^\infty \tau_m^{\alpha_m}$ and $\valpha! = \prod_{m=1}^\infty \alpha_m!$.
\end{enumerate}
\end{assum}

Observe that the sum in \eqref{equ:bound_partial_weighted} is actually a series, because $\valpha$ has infinitly many
components and therefore there are countably many vectors such that $|\valpha|_\infty \leq r$.
Assumption~\ref{assum:df}(a) states that we require a \emph{finite} order of partial differentiability of $f$, i.e., up to order $r$ 
with respect to each variable $\xi_m$, and, maybe more importantly, Assumption~\ref{assum:df}(b) 
asks for a \emph{weighted square-summability} of the $L^2_\mu$-norms of the corresponding partial derivatives.
The latter, in particular, implies bounds of the form
\[
	\|\partial^\valpha f\|_{L^2_\mu} \leq K \sqrt{\valpha!}\, \vtau^{-\valpha}, 
	\qquad
	|\valpha|_0 \le r,
\]
since otherwise the summability requirement \eqref{equ:bound_partial_weighted} would not hold.
Recalling that $(\tau^{-1}_m)_{m\in\bbN} \in \ell^p(\bbN)$ this bound implies that, e.g., the $L^2_\mu$-norm of the derivative $\partial^{\alpha}_{\xi_m} f$, $\alpha \leq r$, decays if $m\to\infty$.

The following result shows that the smoothness condition of Assumption~\ref{assum:df} implies the first condition \eqref{equ:sum_b_nu_f_nu} of Theorem~\ref{theo:conv2}:

\begin{theo}[cf. {\cite[Theorem 3.1]{BachmayrEtAl2015}}] \label{theo:Bachmayr1}
Let Assumption \ref{assum:df} be satisfied. 
Then, with the weights
\begin{align} \label{equ:b_nu}
	b_\vnu 
	& = b_\vnu(\vtau,r) 
	= \sum_{|\valpha|_\infty \leq r} \binom{\vnu}{\valpha} \vtau^{2\valpha}
	= \prod_{m\geq1} \left(\sum_{l=0}^r \binom{\nu_m}{l} \tau_m^{2l}\right), \qquad \vnu \in \mc F,
\end{align}
where
\[
	\binom{\vnu}{\valpha} := \prod_{m\geq1} \binom{\nu_m}{\alpha_m}
	\quad \text{ and } \quad
	\binom{\nu_m}{\alpha_m} := 0 \; \text{ if } \; \alpha_m>\nu_m,
\]
there holds
\begin{equation}\label{eq:the_sums_of_b_and_tau}
	\sum_{\vnu \in \mc F} b_\vnu \|f_\vnu\|_{\mc H}^2
	=
	\sum_{|\valpha|_\infty \leq r} \frac{\vtau^{2\valpha}}{\valpha!} \|\partial^\valpha f\|^2_{L^2_\mu}
	< \infty.  
\end{equation}
\end{theo}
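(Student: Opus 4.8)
The plan is to prove the identity by expressing the Hermite coefficients of each weak derivative $\partial^\valpha f$ in terms of the coefficients $f_\vnu$ of $f$, applying Parseval's identity, and then interchanging the (nonnegative) sums over $\valpha$ and $\vnu$. The algebraic backbone is the differentiation rule for the univariate orthonormal Hermite polynomials: from $H_n' = \sqrt{n}\,H_{n-1}$ one obtains by iteration
\[
H_n^{(l)} = \sqrt{\tfrac{n!}{(n-l)!}}\,H_{n-l} = \sqrt{\textstyle\binom{n}{l}\,l!}\;H_{n-l}, \qquad 0 \le l \le n,
\]
with $H_n^{(l)} \equiv 0$ for $l>n$. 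Tensorizing over the finitely many active coordinates of $\valpha$ (recall $|\valpha|_0 < \infty$ since $\valpha \in \mc F$) gives $\partial^\valpha H_\vnu = \sqrt{\binom{\vnu}{\valpha}\,\valpha!}\;H_{\vnu - \valpha}$ for $\valpha \le \vnu$, and $\partial^\valpha H_\vnu = 0$ otherwise.

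Next I would identify the Hermite coefficients of $\partial^\valpha f$. The cleanest rigorous route avoids differentiating the series of $f$ term by term and instead uses the fact that, with respect to the Gaussian measure, $\partial_{\xi_m}$ is formally adjoint to the raising operator $\xi_m - \partial_{\xi_m}$, which acts on the basis by $(\xi_m - \partial_{\xi_m})H_{\mu_m} = \sqrt{\mu_m+1}\,H_{\mu_m+1}$; this is precisely Gaussian integration by parts, and iterating the adjoint of $\partial^\valpha$ yields $(\partial^\valpha)^\ast H_\vmu = \sqrt{\binom{\vmu+\valpha}{\valpha}\,\valpha!}\,H_{\vmu+\valpha}$. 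Since $\partial^\valpha f \in L^2_\mu(\bbR^\bbN;\mc H)$ by Assumption~\ref{assum:df}(a), testing against $H_\vmu$ then gives
\[
(\partial^\valpha f)_\vmu
= \int_{\bbR^\bbN} f\,(\partial^\valpha)^\ast H_\vmu\,\mu(\d\vxi)
= \sqrt{\textstyle\binom{\vmu+\valpha}{\valpha}\,\valpha!}\;f_{\vmu+\valpha}.
\]
Equivalently, one may expand $f$ in the Hermite basis and differentiate term by term, justified by $L^2_\mu$-convergence of the resulting series; both routes give the same coefficients.

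Parseval's identity for the orthonormal basis $\{H_\vmu\}$ of $L^2_\mu(\bbR^\bbN;\mc H)$ then yields, after substituting $\vnu = \vmu+\valpha$ and using the convention $\binom{\nu_m}{\alpha_m}=0$ for $\alpha_m>\nu_m$ to extend the sum to all $\vnu \in \mc F$,
\[
\|\partial^\valpha f\|^2_{L^2_\mu}
= \sum_{\vmu \in \mc F} \binom{\vmu+\valpha}{\valpha}\,\valpha!\,\|f_{\vmu+\valpha}\|^2_{\mc H}
= \valpha!\sum_{\vnu \in \mc F} \binom{\vnu}{\valpha}\,\|f_\vnu\|^2_{\mc H}.
\]
Multiplying by $\vtau^{2\valpha}/\valpha!$ and summing over $|\valpha|_\infty \le r$, all summands are nonnegative, so Tonelli's theorem permits exchanging the two sums:
\[
\sum_{|\valpha|_\infty \le r} \frac{\vtau^{2\valpha}}{\valpha!}\,\|\partial^\valpha f\|^2_{L^2_\mu}
= \sum_{\vnu \in \mc F}\Biggl(\sum_{|\valpha|_\infty \le r} \binom{\vnu}{\valpha}\,\vtau^{2\valpha}\Biggr)\|f_\vnu\|^2_{\mc H}
= \sum_{\vnu \in \mc F} b_\vnu\,\|f_\vnu\|^2_{\mc H},
\]
by the definition \eqref{equ:b_nu} of $b_\vnu$; finiteness is exactly Assumption~\ref{assum:df}(b), i.e.\ \eqref{equ:bound_partial_weighted}.

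The step I expect to be the main obstacle is the rigorous identification of $(\partial^\valpha f)_\vmu$: one must justify the Gaussian integration-by-parts (adjoint) identity in the countably-infinite-dimensional setting and verify that, because only finitely many components of $\valpha$ are nonzero, the manipulation reduces to a finite-dimensional computation compatible with the weak-derivative hypothesis of Assumption~\ref{assum:df}(a). Everything downstream---Parseval, the reindexing, and the Tonelli interchange---is then routine given the nonnegativity of all terms.
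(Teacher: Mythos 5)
Your argument is correct, but note that the paper does not prove this statement at all: it is imported verbatim from \cite[Theorem 3.1]{BachmayrEtAl2015}, and your proof is essentially the standard one given there --- the Hermite differentiation identity $\partial^\valpha H_\vnu = \sqrt{\smash[b]{\binom{\vnu}{\valpha}\valpha!}}\,H_{\vnu-\valpha}$, Parseval applied to $\partial^\valpha f$, and a Tonelli interchange of the nonnegative double sum. The one technical point you rightly flag (justifying the Gaussian integration by parts against Hermite polynomials, which are not compactly supported, so as to identify $(\partial^\valpha f)_\vmu$ from the weak-derivative hypothesis) is the only place requiring care, and your observation that it reduces to a finite-dimensional computation over the support of $\valpha$ is the right way to handle it; your Tonelli step also correctly reproduces the remark in the paper that the identity in \eqref{eq:the_sums_of_b_and_tau} holds in $[0,\infty]$ independently of the finiteness assumption.
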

(We mention in passing that in \cite{BachmayrEtAl2015} the assertion of Theorem \ref{theo:Bachmayr1} was actually proven without requiring that 
both series in \eqref{eq:the_sums_of_b_and_tau} be finite.) 
To apply Theorem~\ref{theo:conv2} it remains to prove the existence of a nonincreasing and $p$-summable sequence which dominates $c_\vnu / b^{1/2}_\vnu$, $\vnu\in\mc F$.
Since the $b_\vnu$ are explicitly given in \eqref{equ:b_nu}, this boils down to the question, how fast the projection errors $c_\vnu$ are allowed to grow.
As it turns out, a polynomial growth w.r.t. $\vnu$ as given in \eqref{equ:c_nu_bound} in Proposition~\ref{thm:c_nu} is sufficient.
We therefore state the following lemma, which is strongly based on the techniques developed in the proofs of \cite[Lemma 5.1]{BachmayrEtAl2015} and \cite[Lemma 3.4]{Chen2016}.

\begin{lem}
\label{lem:c_nu_b_nu}
Let there exists a $\theta \geq0$ and a $K \geq 1$ such that
\[
	c_\vnu \leq \prod_{m\geq1}^\infty (1+ K \nu_m)^{\theta+1}, \qquad \vnu\in\mc F.
\]
Then for any increasing sequence $(\tau_m)_{m\in\bbN}$ such that $\sum_{m\geq 1} \tau^{-p}_m < \infty$ for a $p > 0$ and for any $r > 2(\theta+1) + \frac 2p$ there exists a nonincreasing sequence $(\hat c_\vnu)_{\vnu\in\mc F} \in \ell^p(\mc F)$ such that
\[
	\frac{c_\vnu}{b^{1/2}_\vnu} \leq \hat c_\vnu \qquad \forall \vnu\in\mc F,
\]
where $b_\vnu = b_\vnu(\tau,r)$ is as in \eqref{equ:b_nu}.
\end{lem}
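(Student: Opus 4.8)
The plan is to exploit the fact that both the hypothesized bound on $c_\vnu$ and the weight $b_\vnu$ in \eqref{equ:b_nu} factorize over the coordinates $m$, so that the whole quotient $c_\vnu/b_\vnu^{1/2}$ is dominated by a product of univariate factors. Concretely, I would set
\[
	d_m(\nu) := \frac{(1+K\nu)^{\theta+1}}{\bigl(\sum_{l=0}^r \binom{\nu}{l}\tau_m^{2l}\bigr)^{1/2}},
	\qquad \nu\in\bbN_0,\ m\in\bbN,
\]
so that the hypothesis on $c_\vnu$ together with \eqref{equ:b_nu} gives $c_\vnu/b_\vnu^{1/2}\leq\prod_{m\geq1} d_m(\nu_m)$, where $d_m(0)=1$ ensures that only finitely many factors differ from $1$. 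The candidate dominating sequence is the coordinatewise nonincreasing envelope $\hat c_\vnu := \prod_{m\geq1}\bar d_m(\nu_m)$ with $\bar d_m(\nu):=\sup_{k\geq\nu}d_m(k)$. Being a product of factors each nonincreasing in its own argument, $\hat c_\vnu$ is automatically nonincreasing with respect to the partial order on $\mc F$, and it manifestly dominates $c_\vnu/b_\vnu^{1/2}$; so the only real task is to prove $p$-summability.

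Next I would establish a single univariate decay estimate for $d_m$. Bounding the denominator from below by its top term via $\binom{\nu}{r}\geq(\nu/r)^r$ for $\nu\geq r$, by its $l=1$ term for $1\leq\nu<r$, and using $1+K\nu\leq 2K\nu$ for $\nu\geq1$, I expect a constant $C'$ depending only on $K,\theta,r$ with
\[
	d_m(\nu)^2 \leq C'\,(1+\nu)^{2(\theta+1)-r}\,\tau_m^{-2},
	\qquad \nu\geq1,
\]
valid once $\tau_m\geq1$. Since $r>2(\theta+1)$ the exponent $2(\theta+1)-r$ is negative, so the right-hand side is decreasing in $\nu$ and the same bound carries over to the envelope, giving $\bar d_m(\nu)\leq\sqrt{C'}(1+\nu)^{(\theta+1)-r/2}\tau_m^{-1}$ for $\nu\geq1$. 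Because $\sum_m\tau_m^{-p}<\infty$ forces $\tau_m\to\infty$, there is an index $m_0$ beyond which $\sqrt{C'}2^{(\theta+1)-r/2}\tau_m^{-1}\leq 1$, so that $\sup_{k\geq1}d_m(k)\leq1$ and hence $\bar d_m(0)=1$ for $m\geq m_0$.

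Finally I would verify $\hat c_\vnu\in\ell^p(\mc F)$ through the product-summation identity for finitely supported multi-indices, namely $\sum_{\vnu\in\mc F}\prod_m \bar d_m(\nu_m)^p = \prod_m\bigl(\sum_{\nu\geq0}\bar d_m(\nu)^p\bigr)$. The coordinatewise tail obeys $\sum_{\nu\geq1}\bar d_m(\nu)^p\leq (C')^{p/2}\tau_m^{-p}\sum_{\nu\geq1}(1+\nu)^{p((\theta+1)-r/2)}$, and the inner sum over $\nu$ converges precisely because $p\bigl(r/2-(\theta+1)\bigr)>1$, i.e.\ $r>2(\theta+1)+2/p$ --- this is exactly where the stated lower bound on $r$ is consumed. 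Splitting the product into the finitely many factors $m<m_0$ (each a finite univariate sum, hence a finite positive constant) and the factors $m\geq m_0$ (each equal to $1+\mc O(\tau_m^{-p})$ since $\bar d_m(0)=1$ there), the infinite tail product converges because $\sum_{m\geq m_0}\tau_m^{-p}<\infty$, whence the whole product is finite.

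The crux, and the step I expect to require the most care, is to package the univariate estimates so that one and the same bound simultaneously delivers decay in \emph{two} directions: decay in the polynomial degree $\nu$, which after raising to the $p$-th power and summing exploits $r>2(\theta+1)+2/p$, and decay in the coordinate index $m$ through the factor $\tau_m^{-1}$, which through the infinite product exploits $\sum_m\tau_m^{-p}<\infty$. The accompanying bookkeeping --- identifying the threshold $m_0$ that guarantees $\bar d_m(0)=1$, so that the tail product is of convergent type --- is routine but must be tracked carefully, since without it the infinite product need not be finite.
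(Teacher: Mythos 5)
Your proposal is correct and follows essentially the same route as the paper's proof: factor $c_\vnu/b_\vnu^{1/2}$ over coordinates, retain a single binomial term of the denominator to extract decay $(1+\nu)^{2(\theta+1)-r}$ in the degree and $\tau_m^{-2}$ in the coordinate index, and sum via the product formula, consuming $r>2(\theta+1)+\tfrac 2p$ for the univariate series and $\sum_m\tau_m^{-p}<\infty$ for the infinite product. The only cosmetic difference is that you enforce monotonicity by taking the coordinatewise sup-envelope $\sup_{k\geq\nu}d_m(k)$, whereas the paper builds an explicitly monotone majorant (via $\hat h$, $\hat\tau_m$ and the constant $C_\tau$) and checks $\hat c_{\vnu+\ve_m}\leq\hat c_\vnu$ by hand; both rest on the same threshold-index bookkeeping for the finitely many small $m$.
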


\begin{proof}
We start with constructing the dominating sequence $(\hat c_\vnu)_{\vnu \in \mc F}$ 
and show afterwards that it belongs to $\ell^p(\mc F)$ and is nonincreasing. 
In the following we use the notation $a \wedge b := \min(a,b)$ and $a \vee b := \max(a,b)$.

\paragraph{Step 1: Constructing $\hat c_\vnu$}
We get due to
\[
	\binom{\nu_m}{\nu_m \wedge r} \tau_m^{2(\nu_m\wedge r)} \leq \binom{\nu_m}{r} \tau_m^{2r} \leq \sum_{l=0}^r \binom{\nu_m}{l} \tau_m^{2l}
\]
that
\begin{align} \label{equ:proof_chen_star}
	\frac {c^2_\vnu}{b_\vnu} 	
	& \leq \prod_{m\geq1} \frac{(1+K\nu_m)^{2(\theta+1)}}{\sum_{l=0}^r \binom{\nu_m}{l} \tau_m^{2l}}
	\leq \prod_{m\geq1} \frac{(1+K\nu_m)^{2\theta+2}}{\binom{\nu_m}{\nu_m \wedge r} \tau_m^{2(\nu_m\wedge r)}}
	= \prod_{m\geq1} \tau_m^{-2(\nu_m\wedge r)} h(\nu_m)
\end{align}
where we defined the auxiliary function $h(n) := \frac{(1+Kn)^{2\theta+2}}{\binom{n}{n \wedge r}}$, $n\in\bbN$.
We will now derive bounds for $h(n)$ as well as for $\tau_m^{-2(\nu_m\wedge r)}$ in order to construct a dominating sequence $\hat c_\vnu$.

For $n\leq r$ we get $h(n) = (1+Kn)^{2\theta+2}$, but for $n>r$ holds
\[
	h(n) = \frac{(1+Kn)^{2\theta+2}}{\binom{n}{r}} = \frac{r! \, (1+Kn)^{2\theta+2}}{(n+1) \cdots(n+r)}.
\]
Thus, we have $h \in \mc O(n^{2\theta+2-r})$, i.e., there exists a $C_h \in[1,\infty)$ such that
\[
	h(n) \leq C_h n^{2\theta+2-r} =: \hat h(n) \qquad \forall n\in\bbN.
\]
By setting $\hat h(0) := 1 = h(0)$, we get $h(n) \leq \hat h(n)$ for all $n\in\bbN_0$.

Furthermore, since $(\tau^{-1}_m)_{m\in\bbN} \in \ell^p(\bbN)$ we have $\tau_m\to\infty$ as $m\to\infty$.
Thus, there exists an $M\in\bbN$ such that $\tau_m \geq \sqrt C_h$ for $m\geq M$ and $\tau_m \leq \sqrt{C_h}$ for $m< M$.
We define 
\[
	\hat \tau_m := \sqrt{C_h} \vee \tau_m, \qquad m\in\bbN,
\]
and notice that $\hat \tau_m \geq 1$ as well as $(\hat \tau^{-1}_m)_{m\in\bbN}\in\ell^p(\bbN)$ by assumption.
Moreover, we obtain for $m\geq M$
\[
	\tau_m^{2(\nu_m\wedge r)} = \hat \tau_m^{2(\nu_m\wedge r)} \geq \hat \tau^{2(\nu_m\wedge 1)}_m, \qquad \forall \nu_m\in\bbN_0,
\]
since $\tau_m = \hat \tau_m \geq \sqrt{C_h} \geq 1$ in this case.
Further, let us define 
\[
	C_\tau := \min_{m\geq1} \min_{n=0,\ldots,r} \frac {\tau^{2n}_m}{C_h^{n\wedge 1}} > 0
\]
which then yields for $1\leq m < M$
\[
	\tau_m^{2(\nu_m\wedge r)} \geq C_\tau \,C_h^{\nu_m\wedge 1}  = C_\tau \, \hat \tau^{2(\nu_m\wedge 1)}_m, \qquad \forall \nu_m\in\bbN_0
\]
since $\hat \tau_m = \sqrt{C_h}$ for $m<M$. 
We now define
\begin{equation} \label{equ:d_nu}
	\hat c_\vnu^2 := C_\tau^{-M}\prod_{m\geq 1} \hat \tau_m^{- 2(\nu_m\wedge 1)} \hat h(\nu_m).
\end{equation}
and notice that $\hat c^2_\vnu$ dominates $\frac {c^2_\vnu}{b_\vnu}$ by \eqref{equ:proof_chen_star}.

\paragraph{Step 2: Show that $(\hat c_\vnu)_{\vnu \in \mc F}\in\ell^p(\mc F)$}
As for the $p$-summability, there holds
\begin{align*}
	\sum_{\vnu \in \mc F} \hat c_\vnu^{p} 	
	&=
	C_\tau^{-pM/2}\sum_{\vnu \in \mc F} \prod_{m\geq1} \hat \tau_m^{- p (\nu_m\wedge 1)} \hat h^{p/2} (\nu_m)\\	
	& =C_\tau^{-pM/2} \prod_{m\geq1} \sum_{n\geq 0} \hat \tau_m^{- p (n\wedge 1)} \hat h^{p/2} (n).
\end{align*}
We get
\begin{align*}
	\sum_{n\geq 0} \hat \tau_m^{- p (n\wedge 1)} \hat h^{p/2} (n)
	= 1 + C^{p/2}_h \hat \tau_m^{-p} \underbrace{\sum_{n \geq 1} n^{-p(r-2\theta-2)/2}}_{=:S}
\end{align*}
where the sum $S$ is finite due to the assumption $\frac p2 (r-2\theta-2) =  \frac p2 (r-2\theta-2) > 1$.
The rest follows by using $\log(1+x)\leq x$ for $x$ positive in order to get
\[
	\sum_{\vnu \in \mc F} \hat c_\vnu^{p} 
	=
	C_\tau^{-pM/2}\prod_{m\geq1} (1+ C^{p/2}_hS \hat \tau_m^{-p})
	\leq C_\tau^{-pM/2}\exp\left( C^{p/2}_hS \sum_{m \geq 1} \hat \tau_m^{-p}\right) < \infty
\]
since $(\hat \tau^{-1}_m)_{m\in\bbN}$ is in $\ell^p(\bbN)$ by construction.

\paragraph{Step 3: Show that $(\hat c_\vnu)_{\vnu \in \mc F}$ is nonincreasing}
Let $\vnu \in \mc F$ be arbitrary.
If $m \in \supp \vnu =\{m \in \bbN: \nu_m >0\}$, then we get
\[
	\hat c_{\vnu+\ve_m}^2 = \hat c_{\vnu}^2 \cdot \frac{\hat h(\nu_m+1)}{\hat h(\nu_m)}
	\leq 
	\hat c_{\vnu}^2,
\]
since $\hat h(n)$ is nonincreasing for $n\geq 1$.
Let now $m \notin \supp \vnu$. Then
\[
	\hat c_{\vnu+\ve_m}^2 
	= 
	\hat c_{\vnu}^2 \cdot \hat \tau^{-2}_m \cdot \hat h(1) 
	= 
	\hat c_{\vnu}^2 \cdot C_h \hat \tau^{-2}_m 
	\leq 
	\hat c_{\vnu}^2 \cdot C_h (\sqrt{C_h})^{-2} 
	\leq 
	\hat c_\vnu^2.
\]
In summary, we obtain
\[
	\hat c_{\vnu+\ve_m} \leq \hat c_{\vnu} \qquad \forall m\in\bbN,
\]
hence, $(\hat c_\vnu)_{\vnu\in\mc F}$ is nonincreasing.
\end{proof}

We can now state our main convergence result for sparse collocation.

\begin{theo}[Convergence of sparse collocation] \label{cor:conv}
Assume that for $\theta \geq 0$ and $K \geq 1$
there holds
\begin{equation} \label{equ:Delta_bound}
	\|\Delta_i H_\nu\|_{L^2_\mu} \leq (1+ K\nu)^\theta,
	\qquad i \in\bbN_0.
\end{equation}
Then, for any function $f$ which satisfies Assumption \ref{assum:df} with $r  > 2(\theta+1) + \frac 2p$ and Assumption \ref{assum:Xi}, there exists a nested sequence of monotone finite subsets $\Lambda_N\subset \mc F$ with $|\Lambda_N| = N$ such that for the sparse collocation error holds
\[
	\left\| f - U_{\Lambda_N} f \right\|_{L^2_\mu} \leq C (1+N)^{-\left( \frac 1p - \frac 12\right )}.
\]
\end{theo}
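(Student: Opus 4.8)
The plan is to obtain the statement purely by assembling the four preceding results into a single chain, since the genuine analytic work has already been carried out in Proposition~\ref{thm:c_nu}, Theorem~\ref{theo:Bachmayr1} and Lemma~\ref{lem:c_nu_b_nu}. The target inequality is precisely the conclusion \eqref{equ:conv} of Theorem~\ref{theo:conv2} with rate $s = \frac1p - \frac12$, so the entire task reduces to verifying that the two hypotheses of Theorem~\ref{theo:conv2}---the weighted $\ell^2$-summability \eqref{equ:sum_b_nu_f_nu} and the existence of a nonincreasing, $p$-summable majorant of $c_\vnu/b_\vnu^{1/2}$---are in force under the stated assumptions.

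First I would feed the hypothesis \eqref{equ:Delta_bound} into Proposition~\ref{thm:c_nu}: this is verbatim the bound \eqref{equ:Delta_i_bound}, so \eqref{equ:c_nu_bound} delivers the polynomial growth
\[
	c_\vnu \leq \prod_{m\geq1}(1+K\nu_m)^{\theta+1}, \qquad \vnu\in\mc F.
\]
Next, Assumption~\ref{assum:df} supplies the integer $r$ and a sequence $(\tau_m^{-1})_{m\in\bbN}\in\ell^p(\bbN)$ together with the weighted derivative bound \eqref{equ:bound_partial_weighted}. Applying Theorem~\ref{theo:Bachmayr1} with these data and the weights $b_\vnu = b_\vnu(\vtau,r)$ of \eqref{equ:b_nu} yields
\[
	\sum_{\vnu\in\mc F} b_\vnu\|f_\vnu\|_{\mc H}^2 < \infty,
\]
which is exactly the first hypothesis \eqref{equ:sum_b_nu_f_nu} of Theorem~\ref{theo:conv2}.

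It then remains to produce the majorant. Here I would invoke Lemma~\ref{lem:c_nu_b_nu} with the growth bound just derived, the same weight sequence $(\tau_m)$ (which satisfies $\sum_m\tau_m^{-p}<\infty$ and hence $\tau_m\to\infty$), and the smoothness order $r$. The threshold hypothesis of the lemma is $r > 2(\theta+1) + \frac2p$, which is precisely the condition imposed on $r$ in the theorem; consequently the lemma returns a nonincreasing sequence $(\hat c_\vnu)_{\vnu\in\mc F}\in\ell^p(\mc F)$ with $c_\vnu/b_\vnu^{1/2}\leq\hat c_\vnu$ for all $\vnu$. With both hypotheses of Theorem~\ref{theo:conv2} verified (and Assumption~\ref{assum:Xi} guaranteeing that the collocation operators are well defined on $\Gamma$), Theorem~\ref{theo:conv2} furnishes the nested family of monotone finite sets $\Lambda_N$ with $|\Lambda_N|=N$ and the claimed rate $\frac1p-\frac12$.

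Since each implication is a direct citation, I do not expect a serious obstacle in the assembly itself; the only point requiring care is parameter bookkeeping---ensuring that the single exponent $p$ from Assumption~\ref{assum:df} is the one used throughout (in $\ell^p(\bbN)$, in the $r$-threshold, in $\ell^p(\mc F)$, and finally in the rate $\frac1p-\frac12$) and that the \emph{same} weights $b_\vnu$ produced by Theorem~\ref{theo:Bachmayr1} are the ones fed into Lemma~\ref{lem:c_nu_b_nu}. All of the real difficulty is hidden upstream in Lemma~\ref{lem:c_nu_b_nu}, whose explicit construction of a nonincreasing $p$-summable dominating sequence is the technical heart of the argument and which I take here as given.
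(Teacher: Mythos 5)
Your proposal is correct and follows exactly the paper's own proof: the authors likewise verify the two hypotheses of Theorem~\ref{theo:conv2} by combining Proposition~\ref{thm:c_nu} (applied to the bound \eqref{equ:Delta_bound}), Theorem~\ref{theo:Bachmayr1}, and Lemma~\ref{lem:c_nu_b_nu}. The only detail the paper adds that you gloss over is that $(\tau_m)$ must be \emph{increasing} to invoke Lemma~\ref{lem:c_nu_b_nu}, which is arranged without loss of generality by permuting the dimensions.
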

\begin{proof}
We prove the assertion by verifying the assumptions of Theorem \ref{theo:conv2}.
Since $f$ satisfies Assumption \ref{assum:df} with $r  > 2(\theta+1) + \frac 2p$, condition \eqref{equ:sum_b_nu_f_nu} of Theorem \ref{theo:conv2} holds due to Theorem \ref{theo:Bachmayr1}.
Moreover, we can apply Lemma \ref{lem:c_nu_b_nu} to verify the remaining assumption of Theorem \ref{theo:conv2} about a nonincreasing dominating sequence $(\hat c_\vnu)_{\vnu\in\mc F}\in\ell^p(\mc F)$, $p\in(0,2)$: due to Proposition \ref{thm:c_nu}) the bound \eqref{equ:Delta_bound} implies
\[
	c_\vnu \leq \prod_{m\geq1}^\infty (1+ K \nu_m)^{\theta+1}, \qquad \vnu\in\mc F,
\]
and the sequence $(\tau_m)_{m\in\bbN}$ appearing in Assumption \ref{assum:df} can w.l.o.g.~be assumed to be increasing (otherwise we can permute the dimension accordingly).
\end{proof}

%
%

\subsection{Convergence of Sparse Collocation Using Gauss-Hermite Nodes}\label{subsec:convGH}
In the following, we will verify the assumption \eqref{equ:Delta_bound} in Theorem~\ref{cor:conv} for the interpolation operators $U_i$ based on Gauss-Hermite nodes.
Moreover, we bound the number of sparse grid points $|\Xi_{\Lambda_N}|$ associated with a multi-index set $\Lambda_N$ allowing us to relate the convergence rate previously derived to a quantity which reflects the computational effort of the collocation approximation.
For nested univariate node sets, i.e., when $\Xi_{i+i} = \Xi_i \cup \{\xi^{(i+1)}_{i+1}\}$, we have $|\Xi_{\Lambda_N}| = |\Lambda_N|$.
This simple relation, however, fails to hold for non-nested interpolation sequences such as those based on Gauss-Hermite nodes.

\begin{lem} \label{lem:bound_UiHnu}
For $U_i$ being the interpolation operator based on the zeros of the $(i+1)$th Hermite polynomial we have for each $\nu \in \bbN$ that
\[
	\|U_i H_\nu \|^2_{L^2_\mu} \leq c^2\e \sqrt{2\nu-1} \qquad \forall i\in\bbN_0
\]
where $c = 1.086435$ is the constant appearing in \emph{Cram\'er's inequality} for Hermite functions.
In particular, there holds
\[
	\|\Delta_i H_\nu\|_{L^2_\mu} \leq (1+ K\nu)
\]
with $K = 2c\sqrt{\e}>1$.
\end{lem}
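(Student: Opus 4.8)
The plan is to reduce the $L^2_\mu$-norm of the interpolant to a Gauss--Hermite quadrature sum, bound the nodal values pointwise by Cram\'er's inequality, and then control the resulting weighted sum of $\e^{\xi^2/2}$ over the nodes. Throughout I write $\xi_0^{(i)},\dots,\xi_i^{(i)}$ for the $i+1$ zeros of $H_{i+1}$ (the Gauss--Hermite nodes) and $w_0^{(i)},\dots,w_i^{(i)}$ for the associated quadrature weights, which satisfy $\sum_j w_j^{(i)}=1$ and make quadrature exact on polynomials of degree $\le 2i+1$. First I would record the identity
\[
  \|U_i H_\nu\|_{L^2_\mu}^2 = \sum_{j=0}^i w_j^{(i)}\, H_\nu\big(\xi_j^{(i)}\big)^2, \qquad i\in\bbN_0 .
\]
Indeed $U_i H_\nu\in\mc P_i$, so $(U_i H_\nu)^2$ has degree at most $2i$ and quadrature is exact on it; since $U_i H_\nu$ interpolates $H_\nu$ at every node, $(U_i H_\nu)(\xi_j^{(i)})=H_\nu(\xi_j^{(i)})$, giving the claim. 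For $i\ge\nu$ one has $U_iH_\nu=H_\nu$ (exact reproduction of degree-$\nu$ polynomials), so $\|U_iH_\nu\|_{L^2_\mu}^2=1\le c^2\e\sqrt{2\nu-1}$ because $c^2\e>1$ and $\nu\ge1$. The case split is genuinely needed: the quadrature route below produces a bound growing in $i$, which is harmless only once we know $i<\nu$.

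For $i<\nu$ I would invoke Cram\'er's inequality in its Gaussian normalisation, $H_\nu(x)^2\,\e^{-x^2/2}\le c^2$ for all $x$, to estimate each nodal value and obtain
\[
  \|U_i H_\nu\|_{L^2_\mu}^2 \le c^2 \sum_{j=0}^i w_j^{(i)}\, \e^{(\xi_j^{(i)})^2/2}.
\]
The heart of the proof is then the purely quadrature-theoretic estimate
\[
  \sum_{j=0}^i w_j^{(i)}\, \e^{(\xi_j^{(i)})^2/2} \le \e\sqrt{2i+1},
\]
after which $i\le\nu-1$ yields $\sqrt{2i+1}\le\sqrt{2\nu-1}$ and hence $\|U_iH_\nu\|_{L^2_\mu}^2\le c^2\e\sqrt{2\nu-1}$, completing both cases. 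To prove the displayed estimate I would use the Christoffel--Darboux expression for the weights, $w_j^{(i)}=\big[(i+1)H_i(\xi_j^{(i)})^2\big]^{-1}$ (which follows from $H_{i+1}'=\sqrt{i+1}\,H_i$ and the confluent Christoffel--Darboux formula), turning the claim into the equivalent lower bound
\[
  H_i\big(\xi_j^{(i)}\big)^2\,\e^{-(\xi_j^{(i)})^2/2} \ge \big(\e\sqrt{2i+1}\big)^{-1}
\]
on the value of the $i$-th Hermite function at the zeros of $H_{i+1}$.

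I expect this lower bound to be the main obstacle. In contrast to the uniform \emph{upper} bound supplied by Cram\'er, it asserts that the Hermite function does not dip too low at the interlacing Gauss nodes, and such two-sided control lies deeper. I would obtain it from the oscillatory (Plancherel--Rotach) behaviour of Hermite functions: the function $u=H_i\,\e^{-x^2/4}$ solves $u''+\big(i+\tfrac12-\tfrac{x^2}{4}\big)u=0$, whose Sonin-type monotone envelope $u^2+u'^2/Q$ (with $Q(x)=i+\tfrac12-\tfrac{x^2}{4}$) furnishes the required amplitude lower bound in the bulk, equivalently the classical sharp estimates for Gauss--Hermite weights. Finally I would pass to the detail operator by the triangle inequality,
\[
  \|\Delta_i H_\nu\|_{L^2_\mu} \le \|U_i H_\nu\|_{L^2_\mu}+\|U_{i-1}H_\nu\|_{L^2_\mu} \le 2\,c\sqrt{\e}\,(2\nu-1)^{1/4},
\]
using $U_{-1}\equiv 0$ when $i=0$. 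Since $2\nu-1\le\nu^4$ for every $\nu\ge1$, we have $(2\nu-1)^{1/4}\le\nu$, so with $K:=2c\sqrt{\e}>1$ the right-hand side is at most $K\nu\le 1+K\nu$, which is the asserted bound.
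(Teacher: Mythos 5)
Your reduction is exactly the one the paper uses: Gauss--Hermite quadrature is exact on $(U_iH_\nu)^2\in\mc P_{2i}$, which turns $\|U_iH_\nu\|_{L^2_\mu}^2$ into $\sum_k w_k^{(i)}H_\nu^2(\xi_k^{(i)})$, and Cram\'er's inequality $|H_\nu(\xi)|\le c\,\e^{\xi^2/4}$ then leaves you with the weighted sum $\sum_k w_k^{(i)}\e^{(\xi_k^{(i)})^2/2}$; the passage to $\Delta_i$ by the triangle inequality is also the same. The gap is that the one genuinely quantitative ingredient --- the bound $\sum_k w_k^{(i)}\e^{(\xi_k^{(i)})^2/2}=\mc O(\sqrt{i})$ --- is not actually proved. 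You correctly identify it as ``the heart of the proof'' and ``the main obstacle,'' but then only gesture at Plancherel--Rotach behaviour and a Sonin-type envelope. The paper does not prove this either from scratch: it invokes a known result (Nevai, Lemma 4), which states precisely $\sum_{k}\tilde w_{kn}\exp(\tilde\xi_{kn}^2)\le \e\sqrt{\pi(2n+1)}$ in the physicists' normalization, and rescales. Without either citing such a result or completing the argument, the proof is not finished.

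Two further points about your proposed route to that estimate. First, you reduce the summed bound to a \emph{uniform pointwise} lower bound $H_i^2(\xi_j^{(i)})\,\e^{-(\xi_j^{(i)})^2/2}\ge(\e\sqrt{2i+1})^{-1}$ at every node, via $w_j^{(i)}=[(i+1)H_i^2(\xi_j^{(i)})]^{-1}$. That identity is correct, but the pointwise statement is strictly stronger than the summed one and is delicate at the extreme nodes, which sit close to the turning point $\sqrt{4i+2}$ of the Hermite function $H_i\e^{-x^2/4}$; the Sonin envelope $u^2+u'^2/Q$ controls the monotonicity of the \emph{relative maxima} of that function, not its values at the interlacing zeros of $H_{i+1}$, so some additional argument (effectively two-sided Christoffel-number estimates) would still be needed. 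Second, your constant $\e\sqrt{2i+1}$ is sharper than what the cited Nevai bound delivers after rescaling, namely $\e\sqrt{2(i+1)+1}$; with the weaker constant one only gets $\sqrt{2i+3}\le\sqrt{2\nu-1}$ for $i\le\nu-2$, and the remaining case $i=\nu-1$ must be handled by the (easy but necessary) observation that $U_{\nu-1}H_\nu\equiv 0$, since $U_{\nu-1}$ interpolates at the zeros of $H_\nu$. The paper makes exactly this case distinction; your write-up silently relies on the unproven sharper constant to avoid it.
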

\begin{proof}
We start by recalling the $L^2_\mu$-orthogonality ($\mu$ refers here to the univariate standard Gaussian measure $N(0,1)$) of Lagrange basis polynomials $L_{k}^{(i)}$ constructed from the zeros $\{\xi_{k}^{(i)}\}_{k=0}^i$ of the Hermite polynomial of degree $i+1$ (\cite[Theorem 14.2.1]{Szego1939}).
This orthogonality yields
\begin{align*}
	\|U_i H_\nu \|^2_{L^2_\mu}
	&= 
	\int_{\bbR} \left(\sum_{k=0}^i H_\nu(\xi_k^{(i)}) L_k^{(i)}(\xi)\right)^2 \, \mu(\d \xi)
	= 
	\sum_{k=0}^i  H^2_\nu(\xi_{k}^{(i)}) 
	\int_{\bbR} \left(L_{k}^{(i)}(\xi)\right)^2 \, \mu(\d \xi)\\
	& = 
	\sum_{k=0}^i  H^2_\nu(\xi_{k}^{(i)}) w_{k}^{(i)}
\end{align*}
where $\{w_{k}^{(i)}\}_{k=0}^i$ denotes the weights of the Gauss quadrature formulae based on the zeros of the $(i+1)$th Hermite polynomial, see also \cite[Theorem 14.2.1]{Szego1939}.

Next, we recall Cram\'er's inequality for the Hermite polynomials $\tilde H_\nu$ taken w.r.t. the weight function $\tilde \rho(\xi) = \exp(-\xi^2)$, i.e.,
\[
	|\tilde H_n(\xi)| \leq c \pi^{-1/4} \exp(\xi^2/2),
\]
see, e.g., \cite[Chapter 22, p.787 ]{AbramowitzStegun1972}.
Since there holds $\tilde H_n(\xi) = \pi^{-1/4} H_n(\xi \sqrt 2)$ \cite[Chapter 22, p.778 ]{AbramowitzStegun1972}, we get
\[
	|H_n(\xi)| \leq c \exp(\xi^2/4)
\]
and, thus,
\begin{align*}
	\|U_i H_\nu \|^2_{L^2_\mu}
	& \leq c^2 \sum_{k=0}^i  \exp(\xi^2_{ki}/2) w_{ki},
\end{align*}
where we switched notation to $\xi_{ki}:=\xi_{k}^{(i)}$ and $w_{ki} := w_{k}^{(i)}$ for convenience.
Furthermore, we use a consequence of \cite[Lemma 4]{Nevai1980}.
The latter states for $\tilde \xi_{kn}$ being the zeros of $\tilde H_n$ and $\tilde w_{kn}$ the Christoffel numbers of corresponding Gauss-Hermite quadrature (i.e. Gauss-Hermite weights for $\tilde \rho$) that
\[
	\sum_{k=1}^n \tilde w_{kn} \exp(\tilde \xi^2_{kn}) \leq \e \sqrt{\pi(2n+1)}.
\]
It can be easily verified that
\[
	\xi_{kn} = \sqrt 2 \tilde \xi_{kn} 
	\quad \text{ and }\quad
	w_{kn} = \pi^{-1/2} \tilde w_{kn}.
\]
Hence, we get
\begin{align*}
	\sum_{k=0}^i  \exp(\xi^2_{ki}/2) w_{ki} 
	\leq \e \sqrt{2(i+1)+1}
\end{align*}
and by noticing that for $i \geq \nu$ we have $U_i H_\nu = H_\nu$ and, thus, $\|U_i H_\nu \|^2_{L^2_\mu}=1$, and for $i=\nu-1$ we get $U_i H_\nu \equiv 0$ the first assertion is shown.

For the second statement we notice 
\[
	\|U_i H_\nu \|^2_{L^2_\mu} \leq c^2\e \nu,	\qquad \forall i \in \bbN_0\, \forall \nu \geq 1
\]
since $\nu \geq \sqrt{2\nu-1}$ for $\nu \geq 1$.
And, because of $\Delta_i H_0 \equiv 0$ for $i\geq 1$ and $\Delta_0 H_0 \equiv H_0$, we get
\[
	\|\Delta_i H_\nu \|_{L^2_\mu} \leq 1+ K \nu, \qquad \forall i,\nu \in \bbN_0.
\]
\end{proof}

Thus, interpolation on Gauss-Hermite points satisfies the assumptions of Theorem~\ref{cor:conv} with $\theta=1$ and we obtain

\begin{theo}[Convergence of sparse collocation, Gauss--Hermite nodes] \label{cor:conv_GH}
For any function $f$ which satisfies Assumption \ref{assum:df} with $r  > 4 + \frac 2p$ and Assumption~\ref{assum:Xi}, there exists a nested sequence of mononote finite subsets $\Lambda_N\subset \mc F$ with $|\Lambda_N| = N$ such that for the error of the sparse collocation operator $U_{\Lambda_N}$ based on Gauss-Hermite nodes holds
\[
	\left\| f - U_{\Lambda_N} f \right\|_{L^2_\mu} \leq C (1+N)^{-\left( \frac 1p - \frac 12\right )}.
\]
\end{theo}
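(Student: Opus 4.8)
The plan is to read Theorem~\ref{cor:conv_GH} as the specialization of the abstract result Theorem~\ref{cor:conv} to Gauss-Hermite nodes, so that the entire argument reduces to verifying the single univariate hierarchical-surplus bound \eqref{equ:Delta_bound} for this particular node family and identifying the correct value of $\theta$. First I would establish that the interpolation operators $U_i$ built on the zeros of the $(i+1)$th Hermite polynomial satisfy
\[
	\|\Delta_i H_\nu\|_{L^2_\mu} \leq 1 + K\nu, \qquad i,\nu\in\bbN_0,
\]
for some $K\geq 1$, which is precisely \eqref{equ:Delta_bound} with $\theta=1$. Granting this, the theorem follows at once by invoking Theorem~\ref{cor:conv} with $\theta=1$: its smoothness threshold $r>2(\theta+1)+\tfrac 2p$ then reads $r>4+\tfrac 2p$, matching the hypothesis, and the asserted rate $\tfrac 1p-\tfrac 12$ is inherited verbatim.

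The substance therefore lies in the surplus bound, which I would reach in three steps. The first step is to control $\|U_iH_\nu\|_{L^2_\mu}$ \emph{uniformly in $i$} using the $L^2_\mu$-orthogonality of the Lagrange fundamental polynomials $L_k^{(i)}$ associated with Gauss-Hermite nodes; this collapses $\|U_iH_\nu\|_{L^2_\mu}^2$ into the Gauss quadrature sum $\sum_{k} H_\nu^2(\xi_k^{(i)})\,w_k^{(i)}$, the cross terms vanishing and each $\|L_k^{(i)}\|_{L^2_\mu}^2$ equaling the corresponding weight $w_k^{(i)}$. The second step is a pointwise envelope on the Hermite polynomial: Cram\'er's inequality (after rescaling from the physicists' to the probabilists' normalization) yields $|H_\nu(\xi)|\leq c\exp(\xi^2/4)$, so that the quadrature sum is dominated by $c^2\sum_k \exp((\xi_k^{(i)})^2/2)\,w_k^{(i)}$. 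The third step bounds this weighted exponential sum independently of the node locations by translating Nevai's estimate $\sum_k \tilde w_k\exp(\tilde\xi_k^2)\leq \e\sqrt{\pi(2n+1)}$ through the scaling $\xi_k=\sqrt 2\,\tilde\xi_k$, $w_k=\pi^{-1/2}\tilde w_k$. Combining the three, and using $U_iH_\nu=H_\nu$ once $i\geq\nu$ to cap the dependence on the grid size, gives $\|U_iH_\nu\|_{L^2_\mu}^2\leq c^2\e\sqrt{2\nu-1}$; the surplus estimate then drops out from $\|\Delta_iH_\nu\|\leq\|U_iH_\nu\|+\|U_{i-1}H_\nu\|$ together with $\sqrt{2\nu-1}\leq\nu$.

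I expect the main obstacle to be the uniformity in $i$ of the bound on $\|U_iH_\nu\|_{L^2_\mu}$. For non-nested node families interpolation can be unstable, and any argument routed through the Lebesgue constant would pick up growth in $i$ and ruin the estimate; what rescues the plan is that a \emph{Gaussian-weighted} $L^2$ norm is being measured, so Gauss-quadrature orthogonality reduces everything to a weighted sum of squared nodal values that Cram\'er's and Nevai's inequalities control uniformly in the grid size. The remaining delicate point is bookkeeping the normalization constants when passing between the probabilists' polynomials $H_\nu$, orthonormal for $N(0,1)$, and the physicists' polynomials $\tilde H_\nu$ for the weight $e^{-\xi^2}$: this is routine but must be carried out carefully, since a slip there would corrupt $K$ and, more seriously, the exponent $\theta$ that feeds the final smoothness requirement.
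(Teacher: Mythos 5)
Your proposal is correct and follows essentially the same route as the paper: the theorem is obtained by plugging $\theta=1$ into Theorem~\ref{cor:conv}, and the surplus bound \eqref{equ:Delta_bound} is verified exactly as in the paper's Lemma~\ref{lem:bound_UiHnu} — $L^2_\mu$-orthogonality of the Gauss--Hermite Lagrange basis reducing $\|U_iH_\nu\|^2_{L^2_\mu}$ to a quadrature sum, Cram\'er's inequality, and Nevai's weighted-sum estimate, capped by exactness for $i\geq\nu$. No discrepancies worth noting.
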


\begin{rem}
In numerical experiments we actually observed for $\nu=0,\ldots,39$ that
\[
	\|U_i H_\nu \|_{L^2_\mu} \leq 1,	\qquad \forall i \in \bbN_0,
\]
see Figure \ref{fig:UpGH}.
This would imply
\[
	\|\Delta_i H_\nu \|_{L^2_\mu} 
	\leq 
	\begin{cases}
	   1 & \text{ if } \nu = 0,\\
	   2 & \text{ otherwise, }
	\end{cases} 
	\qquad \forall i,\nu \in \bbN_0.
\]
Again, we even observed a smaller bound numerically, see the right plot in Figure \ref{fig:UpGH}.
However, we have not been able to prove $\|U_i H_\nu \|_{L^2_\mu} \leq 1$ and the improvement in the statement of Theorem~\ref{cor:conv_GH} would have been minor, i.e., the assertion would also hold with the same rate for functions $f:\Gamma\to\mc H$ satisfying Assumption \ref{assum:df} with $r  > 2 + \frac 2p$. Note that similar numerical evidence was presented in \cite{Chen2016} for quadrature operators applied to
Hermite polynomials. See also \cite{back.nobile.eal:lognormal} for analogous numerical bounds in the case of Genz-Keister points. 
\end{rem}

\begin{figure}[h]
\hfill
\begin{minipage}{0.49\textwidth}
\includegraphics[width = \textwidth]{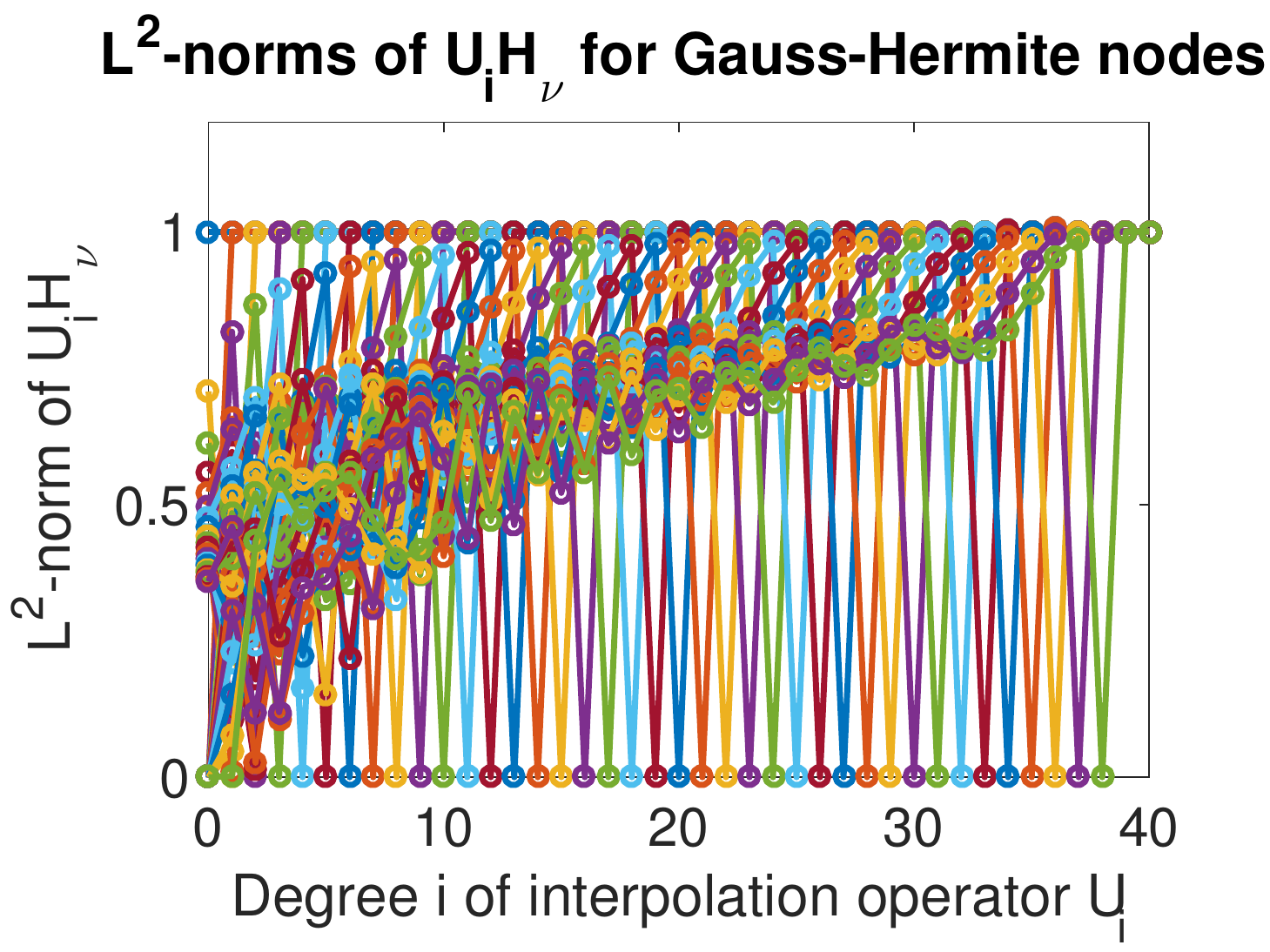}\\[3pt]
\end{minipage}
\hfill
\begin{minipage}{0.49\textwidth}
\includegraphics[width = \textwidth]{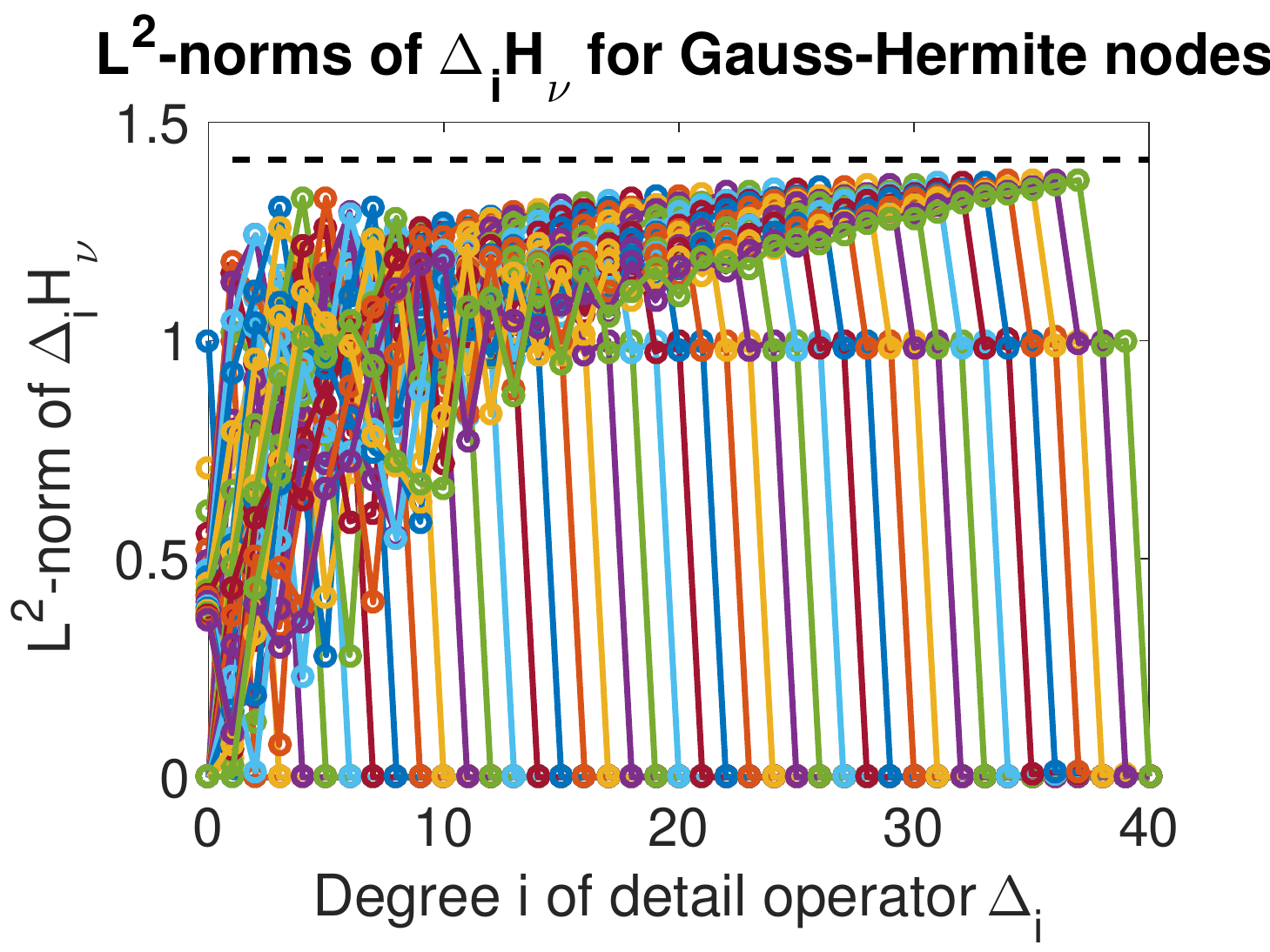}\\[3pt]
\end{minipage}
\label{fig:DeltaiGH}
\caption{Computed values of $\|U_iH_{\nu}\|_{L^2_\mu}$ (left) and $\|\Delta_iH_{\nu}\|_{L^2_\mu}$ (right) for Gauss-Hermite nodes. 
The dashed, black line in the right plot indicates the value $\sqrt2$.}
\label{fig:UpGH}
\end{figure} 

%
%

\subsection{Convergence Rate With Respect to Number of Collocation Nodes}

We now derive bounds for the number of nodes in the sparse grid $\Xi_\Lambda$ associated with $U_\Lambda$.
Consider first the following simple monotone index of cardinality $N$: $\Lambda_N = \{0\ve_j,\ldots, (N-1)\ve_j\}$.
Then due to $|\Xi_{\{k\ve_j\}}| = (k+1)$ we get for this $\Lambda_N$ that
\[
	|\Xi_{\Lambda_N}| \leq \sum_{k=0}^{N-1} (k+1) = \frac{N(N+1)}{2} \in \mc O(N^2).
\]
The quadratic complexity is indeed sharp, since $0$ is the only reappearing Gauss-Hermite node.
We will show in the subsequent two propositions that this complexity holds also for arbitrary monotone multi-index sets. 
We start with rectangular envelopes $\Lambda = \mc R_\vnu$ and provide also a rather technical ordering result which we will require later on.
Recall that $|\Xi^{(\vi)}| = \prod_{m\geq 1} (1+i_m)$.

\begin{propo} \label{propo:X_R_vnu}
Let $\vnu\in\mc F$. 
Then there exists an ordering $n$ of $\mc R_\vnu$, i.e., a bijective mapping $n:\mc R_\vnu \to \{1, \ldots, |\mc R_\vnu|\}$ such that
\[
	|\Xi^{(\vi)}| \leq n(\vi) \qquad \forall \vi \in \mc R_\vnu,
\]
which implies, in particular,
\[
	\left| \Xi_{\mc R_\vnu} \right| 
	\leq \frac{|\mc R_\vnu|\,(|\mc R_\vnu|+1)}{2}.
\] 
\end{propo}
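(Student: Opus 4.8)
The plan is to write $g(\vi):=|\Xi^{(\vi)}|=\prod_{m\geq1}(1+i_m)$ and $M:=|\mc R_\vnu|=\prod_{m\geq1}(1+\nu_m)$, and to construct $n$ so that $g(\vi)\le n(\vi)$. Since $\vnu\in\mc F$ has finite support, only the finitely many coordinates with $\nu_m>0$ matter (the rest contribute a factor $1$ to $g$ and are fixed to $0$), so I may assume the support is $\{1,\dots,D\}$. The natural first idea is to sort $\mc R_\vnu$ by increasing value of $g$ and let $n$ record this rank; then $g(\vi)\le n(\vi)$ for all $\vi$ is equivalent to $F(t):=|\{\vi\in\mc R_\vnu:g(\vi)\le t\}|\ge\min(t,M)$ for every integer $t\ge1$ (the standard reformulation that the $j$-th smallest value is at most $j$), which could be shown by induction on $D$. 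I prefer, however, to exhibit the ordering \emph{explicitly}, which turns the verification into a one-line induction and sidesteps any analysis of ties in $g$.

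Concretely, I would take $n$ to be the mixed-radix (lexicographic) rank: with place values $P_m:=\prod_{m'=m+1}^{D}(1+\nu_{m'})$ (so $P_D=1$), set
\[
	n(\vi):=1+\sum_{m=1}^{D} i_m P_m, \qquad \vi\in\mc R_\vnu.
\]
As $(i_1,\dots,i_D)$ ranges over $\prod_{m=1}^{D}\{0,\dots,\nu_m\}$, the sum $\sum_m i_m P_m$ runs bijectively over $\{0,1,\dots,M-1\}$, so $n$ is a bijection onto $\{1,\dots,M\}$, as required.

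The crux is the inequality $g(\vi)\le n(\vi)$, which I would prove by induction on the number $D$ of active coordinates. For $D\le1$ both sides equal $1+i_1$. For the inductive step I peel off the most significant digit, writing $\vi=(i_1,\vi')$ with $\vi'\in\mc R_{\vnu'}$, $\vnu'=(\nu_2,\dots,\nu_D)$, and $M':=\prod_{m=2}^{D}(1+\nu_m)=P_1$; then $n(\vi)=i_1 M'+n'(\vi')$ for the corresponding rank $n'$ on $\mc R_{\vnu'}$, while $g(\vi)=(1+i_1)\,g(\vi')$. Using the induction hypothesis $g(\vi')\le n'(\vi')$ it remains only to check
\[
	(1+i_1)\,g(\vi') \le i_1 M' + n'(\vi'),
\]
and substituting $n'(\vi')\ge g(\vi')$ reduces this to $i_1\,g(\vi')\le i_1 M'$, i.e.\ to the trivial bound $g(\vi')\le M'$. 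This single reduction is the only genuine step; I expect it to be the main (though mild) obstacle, being essentially the matter of choosing the right quantity to peel off, with everything else bookkeeping.

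Finally, for the cardinality estimate I would use that $\mc R_\vnu$ is monotone, so by \eqref{equ:sparse_grid} the associated sparse grid is $\Xi_{\mc R_\vnu}=\bigcup_{\vi\in\mc R_\vnu}\Xi^{(\vi)}$. The union bound together with $g(\vi)=|\Xi^{(\vi)}|\le n(\vi)$ and the fact that $n$ enumerates $\mc R_\vnu$ bijectively by $\{1,\dots,M\}$ then yields
\[
	|\Xi_{\mc R_\vnu}| \le \sum_{\vi\in\mc R_\vnu}|\Xi^{(\vi)}| \le \sum_{\vi\in\mc R_\vnu} n(\vi) = \sum_{j=1}^{M} j = \frac{|\mc R_\vnu|\,(|\mc R_\vnu|+1)}{2},
\]
which is the asserted bound.
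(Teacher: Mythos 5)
Your proposal is correct and is essentially the paper's own argument: the paper also constructs an explicit mixed-radix rank on $\mc R_\vnu$ and verifies $|\Xi^{(\vi)}|\le n(\vi)$ by induction on the number of active coordinates, with the key step being exactly your reduction $i\,g(\vi')\le i\,M'$ (the paper merely takes the \emph{last} active coordinate as the most significant digit where you take the first). The derivation of the cardinality bound from the ordering is identical.
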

\begin{proof}
The second assertion follows easily by the first one since
\[
	\left| \Xi_{\mc R_\vnu} \right| 
	\leq \sum_{\vi \leq \vnu} |\Xi^{(\vi)}| 
	\leq \sum_{n=1}^{|\mc R_\vnu|} n
	= \frac{|\mc R_\vnu|\,(|\mc R_\vnu|+1)}{2}.
\] 
We prove the first assertion by induction.
Since $\vnu\in\mc F$, there exist only finitely many $m\in\bbN$ such that $\nu_m > 0$.
Without loss of generality we assume that $\nu_m = 0$ for $m > M$ where $M\in\bbN$.
We now perform an induction over the number $M$ of non-zero entries in $\nu$.

\begin{itemize}
\item
\textbf{base case} $M=1$: 
The only possible multi-indices $\vnu\in\mc F$ are $\vnu = k\ve_1$, $k\in\bbN_0$, and we have $R_\vnu = \{0 \ve_1, 1\ve_1, \ldots, \nu_1 \ve_1\}$.
The ordering is then simply $n(\vi) = i_1 + 1$.
Then
\[
	|\Xi^{((i_1,0,\ldots))}| = 1+i_1 = n(\vi).
\]

\item
\textbf{Induction step:} the assertion holds for $M\geq1$.
Let $\vnu \in \mc F$ be such that $\nu_m = 0$ for $m\geq M+2$.
Moreover, let $n_M$ denote the ordering for $\mc R_{\vnu - \nu_{M+1} \ve_{M+1}} = \{\vi \in \mc R_\vnu: i_{M+1} = 0\}$, i.e., it holds
\[
	|\Xi^{(\vi)}| = \prod_{m=1}^{M} (1+i_m) \leq n_M( \vi ) \qquad \forall \vi \in \mc R_{\vnu - \nu_{M+1} \ve_{M+1}}.
\]
For notational convenience, we set $\vi_M := (i_1,\ldots,i_M,0,\ldots)$ for each $\vi \in \mc R_\vnu$ and 
observe that $\vi_M \in \mc R_{\vnu - \nu_{M+1} \ve_{M+1}}$. We define the ordering
\[
	n(\vi) := i_{M+1}\,\left(\prod_{m=1}^M (1+\nu_m)\right) + n_M(\vi_M), \qquad \vi\in\mc R_\vnu.
\]
It is easy to check that $n:\mc R_\vnu \to \{1,\ldots, |\mc R_\vnu|\}$ is again bijective.
Furthermore, we get for each $\vi \in \mc R_\vnu$
\begin{align*}
	|\Xi^{(\vi)}| 
	& = \prod_{m=1}^{M+1} (1+i_m)
	= (i_{M+1}+1)\, \prod_{m=1}^M (1+i_m)\\
	& = i_{M+1} \left(\prod_{m=1}^M (1+i_m)\right) + \prod_{m=1}^M (1+i_m)\\
	& = i_{M+1} \left(\prod_{m=1}^M (1+i_m)\right) + |\Xi_{\vi_m}|\\
	& \leq i_{M+1} \left(\prod_{m=1}^M (1+\nu_m)\right) + n_M(\vi_M) = n(\vi)
\end{align*}
where the last line follows by $i_m \leq \nu_m$ for all $m\geq 1$ and the fact that $\vi_M\in \mc R_{\vnu - \nu_{M+1} \ve_{M+1}}$ for $\vi \in \mc R_\vnu$.
\end{itemize}
\end{proof}

We extend the estimate for $\Xi_{\mc R_\vnu}$ in the above proposition now to arbitrary finite and monotone index sets $\Lambda$:

\begin{propo} \label{propo:X_Lambda}
Let $\Lambda \subset \mc F$ be a finite and monotone, then there holds
\begin{equation}\label{eq:nb_pts_growth}
	|\Xi_\Lambda| \leq \frac{|\Lambda|\,(|\Lambda|+1)}{2}.  
\end{equation}
\end{propo}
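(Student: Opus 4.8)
The plan is to reproduce the mechanism of Proposition~\ref{propo:X_R_vnu} at the level of the whole set $\Lambda$: I would produce a bijective ordering $n\colon \Lambda \to \{1,\ldots,|\Lambda|\}$ with the compatibility property $|\Xi^{(\vi)}| \leq n(\vi)$ for every $\vi \in \Lambda$. Once such an ordering is available, the claim is immediate from the union bound together with the fact that $n$ enumerates $\{1,\ldots,|\Lambda|\}$:
\[
	|\Xi_\Lambda|
	= \Bigl| \bigcup_{\vi \in \Lambda} \Xi^{(\vi)} \Bigr|
	\leq \sum_{\vi \in \Lambda} |\Xi^{(\vi)}|
	\leq \sum_{\vi \in \Lambda} n(\vi)
	= \sum_{k=1}^{|\Lambda|} k
	= \frac{|\Lambda|\,(|\Lambda|+1)}{2}.
\]

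The observation that unlocks such an ordering is that monotonicity forces the down-set of any $\vi \in \Lambda$ to be its \emph{entire} rectangular envelope. Indeed, for $\vi \in \Lambda$ every $\vj \leq \vi$ belongs to $\Lambda$ by monotonicity, so $\{\vj \in \Lambda : \vj \leq \vi\} = \mc R_\vi$, and by \eqref{eq:card_of_R_nu} this set has cardinality $\prod_{m \geq 1}(1+i_m) = |\Xi^{(\vi)}|$. I would then take $n$ to be any \emph{linear extension} of the partial order $\leq$ on the finite poset $\Lambda$, i.e.\ a bijection onto $\{1,\ldots,|\Lambda|\}$ with $\vj < \vi \Rightarrow n(\vj) < n(\vi)$; such an extension exists for every finite poset (topological sort).

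It then remains to verify the compatibility inequality. For this $n$ and any $\vi \in \Lambda$, the $|\mc R_\vi|$ indices $\vj \leq \vi$ receive pairwise distinct values $n(\vj) \leq n(\vi)$ (strict for $\vj < \vi$), so $\{1,\ldots,n(\vi)\}$ contains at least $|\mc R_\vi|$ integers, giving $n(\vi) \geq |\mc R_\vi| = |\Xi^{(\vi)}|$, exactly as required.

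I expect the main obstacle to be conceptual rather than computational: recognizing that no explicit recursive construction of the ordering is needed, in contrast to Proposition~\ref{propo:X_R_vnu}. The single structural input is the identity between the down-set in $\Lambda$ and $\mc R_\vi$, after which the monotonicity of $\Lambda$ guarantees that \emph{any} order-respecting enumeration automatically satisfies the cardinality bound; verifying that a generic linear extension suffices is the crux of the argument.
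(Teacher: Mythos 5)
Your proof is correct, and it takes a genuinely different and more economical route than the paper's. The paper first proves Proposition~\ref{propo:X_R_vnu} by an explicit recursive construction of a compatible ordering on a single rectangular envelope (induction on the number of active dimensions), and then proves Proposition~\ref{propo:X_Lambda} by writing $\Lambda = \bigcup_{k=1}^n \mc R_{\vnu_k}$ and running a second induction on $n$, carefully accounting for the overlap $\mc R_{\vnu_{n+1}} \setminus \Lambda_n$ at each step. You replace all of this with a single abstract observation: monotonicity means the down-set of each $\vi \in \Lambda$ inside $\Lambda$ is all of $\mc R_\vi$, whose cardinality equals $|\Xi^{(\vi)}|$ by \eqref{equ:Xi} and \eqref{eq:card_of_R_nu}, so \emph{any} linear extension of $(\Lambda,\leq)$ automatically assigns $\vi$ a label $n(\vi) \geq |\mc R_\vi| = |\Xi^{(\vi)}|$, and the union bound plus $\sum_{k=1}^{|\Lambda|} k$ finishes the argument. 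This buys brevity and conceptual clarity, avoids both inductions, and in fact subsumes the first assertion of Proposition~\ref{propo:X_R_vnu} as the special case $\Lambda = \mc R_\vnu$ (which is itself monotone). What the paper's version buys in exchange is an explicit, constructive ordering (useful if one ever wants to enumerate the grid in that order), but for the cardinality bound alone your argument is the cleaner one.
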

\begin{proof}
Since $\Lambda$ is supposed to be monotone, it is a union of rectangular envelopes, see Proposition \ref{propo:montone_set}.
Thus, there exist $n$ indices $\vnu_1,\ldots, \vnu_n \in \mc F$ such that
\[
	\Lambda = \bigcup_{k=1}^n \mc R_{\vnu_k} 
	\quad \text{ and } \quad 	\Xi_{\Lambda} = \bigcup_{k=1}^n \Xi_{\mc R_{\vnu_k}}.
\]
We prove the assertion by induction over $n$:
\begin{itemize}
\item
\textbf{base case} $n=1$: 
The assertion follows by Proposition \ref{propo:X_R_vnu}.

\item
\textbf{Induction step:} the assertion holds for $n\geq 1$.
With a slight abuse of notation we set $\Lambda_n := \bigcup_{k=1}^n \mc R_{\vnu_k}$ and obtain
\[
	\sum_{\vi \in \Lambda_n \cup \mc R_{\vnu_{n+1}}} |\Xi^{(\vi)}|
	= 
	\sum_{\vi \in \Lambda_n} |\Xi^{(\vi)}|
	+ \sum_{\vi \in \mc R_{\vnu_{n+1}}\setminus \Lambda_n} |\Xi^{(\vi)}|.
\]
Let $m:=|\mc R_{\vnu_{n+1}}\setminus \Lambda_n|$.
The first statement of Proposition \ref{propo:X_R_vnu} now implies
\[
	\sum_{\vi \in \mc R_{\vnu_{n+1}}\setminus \Lambda_n} |\Xi^{(\vi)}|
	\leq \sum_{k = 1+|\mc R_{\vnu_{n+1}}| - |\mc R_{\vnu_{n+1}}\setminus \Lambda_n|}^{|\mc R_{\vnu_{n+1}}|} k
	\leq \sum_{k = 1+|\Lambda_n|}^{|\Lambda_n|+|\mc R_{\vnu_{n+1}}\setminus \Lambda_n|} k
\]
where the last inequality is due to $|\mc R_{\vnu_{n+1}}| \leq |\mc R_{\vnu_{n+1}}\setminus\Lambda_n| + |\Lambda_n|$.
Thus, we get by the induction hypothesis
\[
	\sum_{\vi \in \Lambda_n \cup \mc R_{\vnu_{n+1}}} |\Xi^{(\vi)}|
	\leq \sum_{k=1}^{|\Lambda_n|} k + \sum_{k = 1+|\Lambda_n|}^{|\Lambda_n|+|\mc R_{\vnu_{n+1}}\setminus \Lambda_n|} k
	= \sum_{k = 1}^{|\Lambda_n \cup \mc R_{\vnu_{n+1}}|} k.
\]
\end{itemize}
\end{proof}

Thus, employing non-nested points such as Gauss-Hermite points, yields at most a quadratic growth of the number of sparse grid points 
\[
	|\Xi_\Lambda| \in \mc O(|\Lambda|^2)
\]
whereas in the nested case one has $|\Xi_\Lambda| = |\Lambda|$.
\begin{rem}
We provide some numerical validation of the bound \eqref{eq:nb_pts_growth}. 
More precisely, we consider the following two families of multi-index sets $\Lambda$ (cf. \cite{back.nobile.eal:comparison}):
\begin{description}
	\item[Total Degree (TD):] $\Lambda = \Lambda(w,M) = \{\vnu \in \mc F: \sum_{m=1}^M \nu_m \leq w,\; \nu_m = 0 \text{ for } m > M\}$ 
	\item[Hyperbolic Cross (HC):] $\Lambda = \Lambda(w,M) = \{\vnu \in \mc F : \prod_{m=1}^M (\nu_m+1) \leq w,\; \nu_m = 0 \text{ for } m > M\}$,
\end{description}
In Figure \ref{fig:check_bound_nb_pts} we fix the number of (active) dimensions $M$ and display the cardinality of $\Xi_{\Lambda(w,M)}$ for both choices of $\Lambda(w,M)$ and increasing values of $w \in \bbN$.
The plot shows that estimate \eqref{eq:nb_pts_growth} is valid but slightly pessimistic for the two specific examples considered here.
\begin{figure}[h]
  \centering
  \includegraphics[width = 0.49\textwidth]{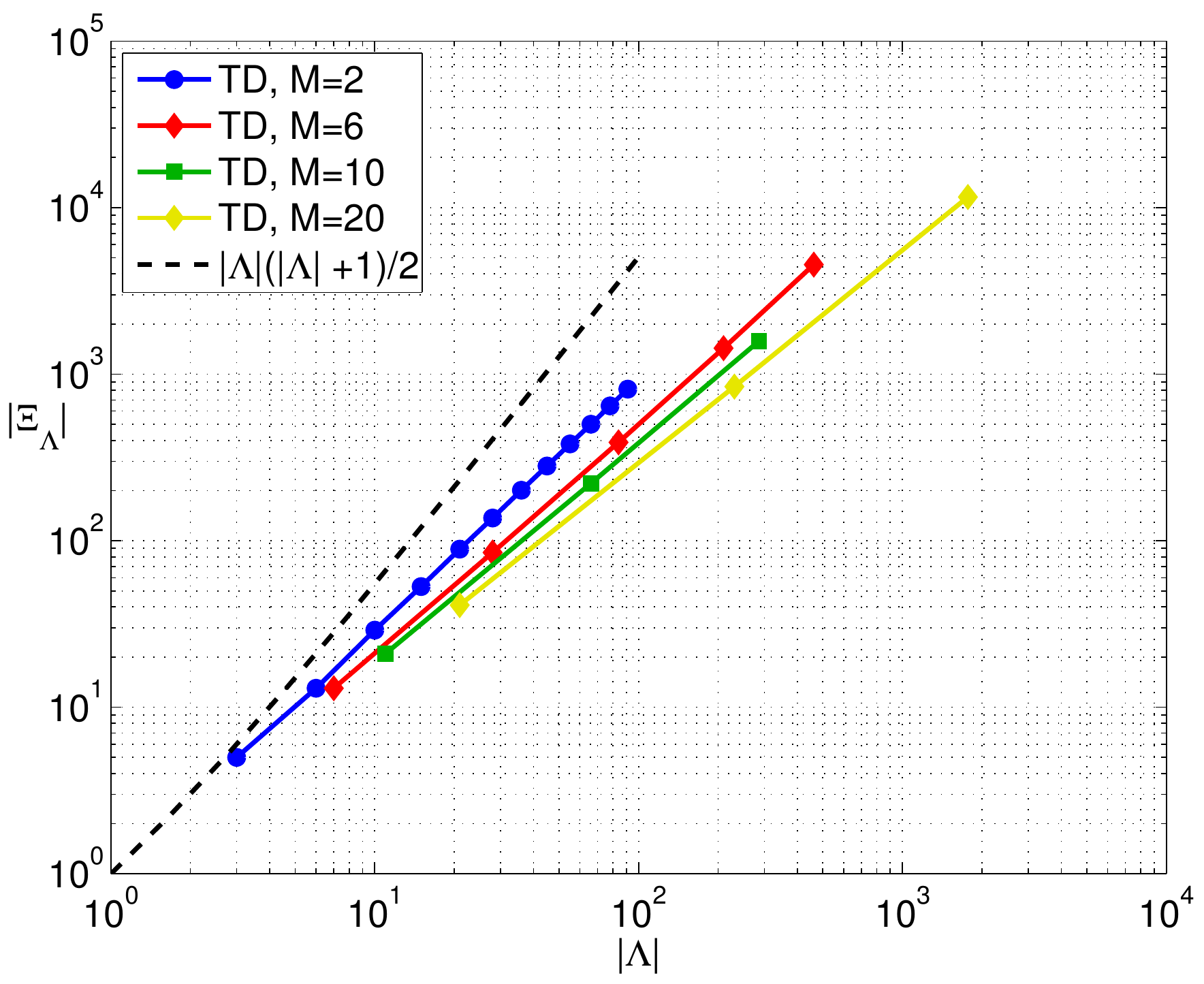}
  \includegraphics[width = 0.49\textwidth]{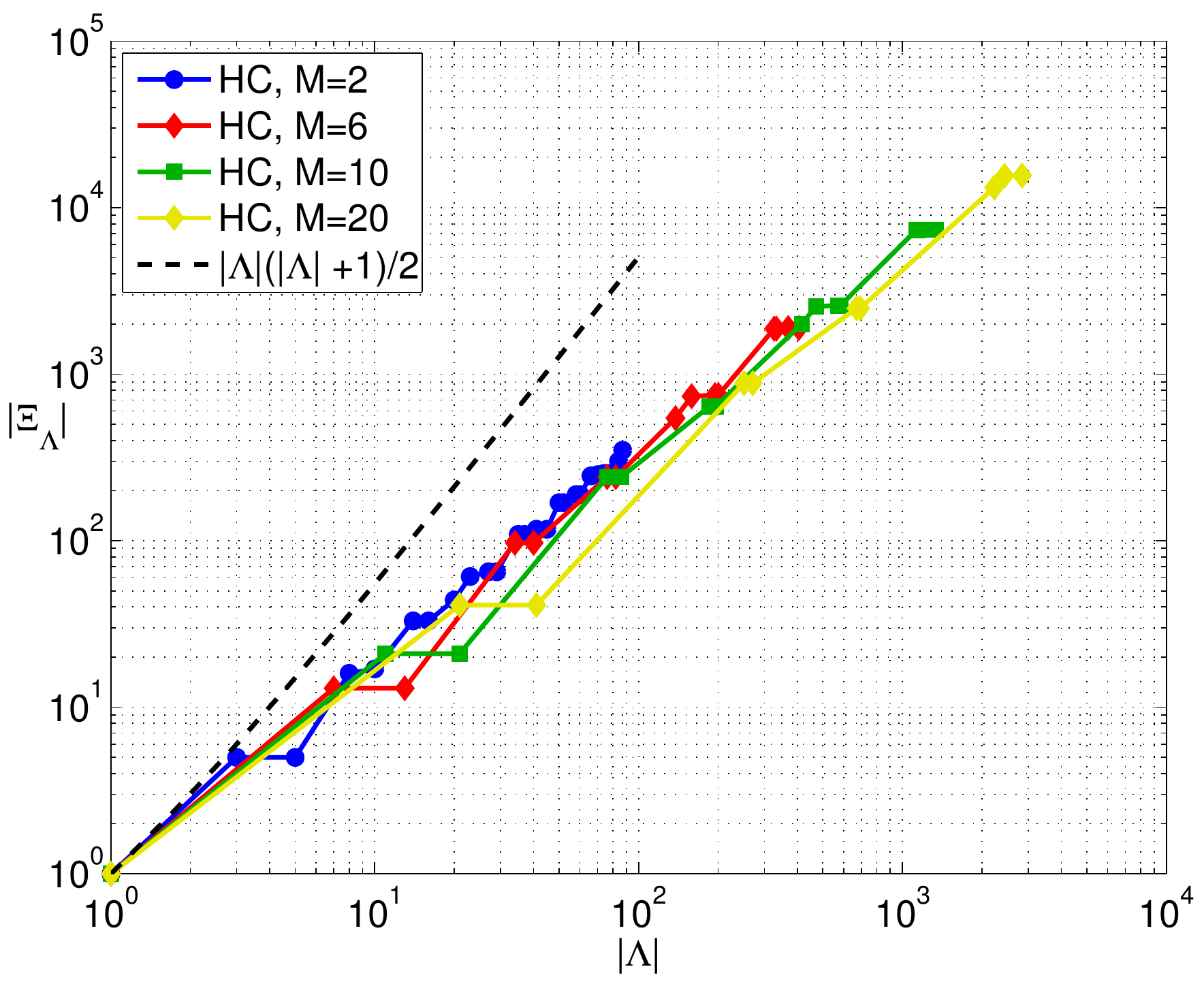}
  \caption{Numerical verification of estimate \eqref{eq:nb_pts_growth} for ``Total Degree'' sparse grids (left) and ``Hyperbolic Cross'' sparse grids (right).}
  \label{fig:check_bound_nb_pts}
\end{figure}
\end{rem}

We finally arrive at the resulting error-cost theorem: 

\begin{theo}[Convergence rate of Gauss-Hermite sparse grid collocation in terms of nodes]\label{theo:GH-conv-vs-nodes}
For any function $f$ which satisfies Assumption \ref{assum:df} with $r  > 4 + \frac 2p$ and Assumption~\ref{assum:Xi}, there exists a nested sequence of mononote finite subsets $\Lambda_N\subset \mc F$ with $|\Lambda_N| = N$ such that for the error of the sparse collocation operator $U_{\Lambda_N}$ based on Gauss-Hermite nodes holds
\[
	\|f - U_{\Lambda_N} f\|_{L^2_\mu} \leq C |\Xi_{\Lambda_N}|^{-\left(\frac 1{2p} - \frac 14\right)}
\]
where $C$ depends on $f$.
\end{theo}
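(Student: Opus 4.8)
The plan is to combine the two key results already established for Gauss-Hermite sparse collocation: the error estimate in terms of the number of multi-indices $N = |\Lambda_N|$ from Theorem~\ref{cor:conv_GH}, and the bound on the number of sparse grid nodes from Proposition~\ref{propo:X_Lambda}. First I would invoke Theorem~\ref{cor:conv_GH}: since $f$ satisfies Assumption~\ref{assum:df} with $r > 4 + \frac{2}{p}$ and Assumption~\ref{assum:Xi}, there exists a nested sequence of monotone finite sets $\Lambda_N \subset \mc F$ with $|\Lambda_N| = N$ for which
\[
	\|f - U_{\Lambda_N} f\|_{L^2_\mu} \leq C (1+N)^{-(1/p - 1/2)}.
\]
The entire remaining task is purely to re-express the right-hand side in terms of the node count $|\Xi_{\Lambda_N}|$ rather than $N$.

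Next I would apply Proposition~\ref{propo:X_Lambda} to these very sets $\Lambda_N$ (which are finite and monotone, so the proposition applies directly), giving
\[
	|\Xi_{\Lambda_N}| \leq \frac{N(N+1)}{2} \leq \frac{(1+N)^2}{2}.
\]
Inverting this yields $(1+N)^{-1} \leq \sqrt{2}\,|\Xi_{\Lambda_N}|^{-1/2}$. Raising this relation to the power $s := 1/p - 1/2 > 0$ (positive since $p \in (0,2)$ by Assumption~\ref{assum:df}) and using that $x \mapsto x^s$ is increasing gives
\[
	(1+N)^{-s} \leq 2^{s/2}\,|\Xi_{\Lambda_N}|^{-s/2}.
\]
Substituting into the error estimate and observing that $\tfrac{s}{2} = \tfrac{1}{2}\bigl(\tfrac 1p - \tfrac 12\bigr) = \tfrac{1}{2p} - \tfrac14$, the claim follows after absorbing the factor $2^{s/2}$ into the constant $C$.

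There is no genuine analytic obstacle in this final step; the only point requiring care is the bookkeeping of exponents. Conceptually, the halving of the convergence rate---from $1/p - 1/2$ in $N$ down to $1/(2p) - 1/4$ in the node count $|\Xi_{\Lambda_N}|$---is precisely the price paid for the at most quadratic growth $|\Xi_{\Lambda_N}| \in \mc O(N^2)$ of Gauss-Hermite sparse grids established in Proposition~\ref{propo:X_Lambda}. In the nested case one instead has $|\Xi_{\Lambda_N}| = N$, in which case the same argument would retain the full rate $1/p - 1/2$.
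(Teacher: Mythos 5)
Your proposal is correct and is exactly the argument the paper intends: Theorem~\ref{cor:conv_GH} gives the rate in $N$, Proposition~\ref{propo:X_Lambda} gives $|\Xi_{\Lambda_N}|\leq N(N+1)/2\leq(1+N)^2/2$, and inverting converts the rate $\frac1p-\frac12$ in $N$ into $\frac1{2p}-\frac14$ in $|\Xi_{\Lambda_N}|$. (Minor remark: inverting actually yields the sharper $(1+N)^{-1}\leq\tfrac{1}{\sqrt2}\,|\Xi_{\Lambda_N}|^{-1/2}$ rather than your $\sqrt2\,|\Xi_{\Lambda_N}|^{-1/2}$, but your weaker inequality is still valid and the constant is absorbed into $C$ either way.)
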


Hence, assume we require an approximation error $\|f - U_{\Lambda_N} f\|_{L^2_\mu} \leq \varepsilon$, then we can achieve this accuracy with
\begin{equation}\label{equ:cost}
	\mathrm{cost}(\varepsilon) \in \mc O\left( \varepsilon^{\frac 1{2p} - \frac 14} \right)
\end{equation}
number of function evalutions of $f$.
In this cost complexity \eqref{equ:cost} we neglected of course the computational work which is necessary to find the resulting multi-index sets $\Lambda_N$.
This is a very important issue. 
Typically, they are constructed employing adaptive algorithms, see \cite{ChkifaEtAl2014, SchillingsSchwab2013, NobileEtAl2016} and also Section \ref{sec:numerics}.
Our result makes no statement about the actual computational work of those.

\begin{rem}[On sparse collocation employing weighted Leja points]
As mentioned in the introduction weighted Leja points \cite{NarayanJakeman2014} seem to be a promising node family for interpolation and sparse collocation.
So far we are, however, unable to prove bounds like \eqref{equ:Delta_bound} for them.
Possibly a more suitable approach for analyzing convergence in case of weighted Leja nodes is to measure the approximation error in the $L^\infty_\mu$-norm instead of the $L^2_\mu$-norm and to estimate the corresponding Lebesgue constant.
See \cite{JantschEtAl2016} for first results on the latter -- which does not yet imply an analogous estimate to \eqref{equ:Delta_bound} -- and \cite{ChkifaEtAl2015,ChkifaEtAl2014} for the convergence analysis of sparse collocation using Leja points on $[-1,1]$ via estimates of the associated Lebesgue constant \cite{Chkifa:leja}.
\end{rem}

%
%
\section{Application to Elliptic PDEs}\label{sec:PDE}
We recall our motivation from the introduction: approximating the weak solution $u$ of an elliptic boundary value problem with lognormal diffusion coefficient as in \eqref{equ:PDE} where $f\in L^ 2(D)$ and $a(\vxi)\in L^\infty(D)$ is given as \eqref{equ:KLE}.
We will discuss now under which conditions the map $\vxi \mapsto u(\vxi) \in H_0^1(D)$ satisfies Assumptions \ref{assum:f}, \ref{assum:Xi} and \ref{assum:df} and can therefore be approximated by sparse grid collocation methods based on Gauss-Hermite nodes as outlined in the previous section.
We will mainly cite results from \cite{BachmayrEtAl2015} but try to emphasize those details which are sometimes omitted in the literature.

\paragraph{Verifying Assumption \ref{assum:f}}
First of all, we have to investigate the domain $\Gamma$ of the mapping $\vxi \mapsto u(\vxi)$.
There holds $\Gamma \neq \bbR^\bbN$ since for arbitrary $\vxi \in \bbR^\bbN$ the expansion \eqref{equ:KLE} need not converge.
A natural domain $\Gamma$ for the mapping $\vxi \mapsto u(\vxi)$ is
\begin{equation}\label{equ:Gamma}
	\Gamma := \{\vxi\in\bbR^\bbN:  \| \sum_{m=1}^\infty \phi_m \xi_m\|_{L^\infty(D)} < \infty \}.
\end{equation}
Further, a natural condition on the decay of the $\phi_m$ is
\begin{equation} \label{equ:KL_modes}
	\sum_{m=1}^\infty	\|\phi_m\|^2_{L^\infty(D)} < \infty
\end{equation}
since \eqref{equ:KL_modes} implies that the series \eqref{equ:KLE} converges in $L^2_\mu(\bbR^\bbN; L^\infty(D))$:
\begin{align*}
	\ev{\left\| \sum_{m=1}^\infty \phi_m \, \xi_m \right\|^2_{L^\infty(D)}}
	& \leq
	\ev{ \left( \sum_{m=1}^\infty  \left\| \phi_m(x)\right\|_{L^\infty(D)} |\xi_m| \right)^2}\\
	& = \sum_{m=1}^\infty  \left\| \phi_m(x)\right\|^2_{L^\infty(D)} \ev{|\xi_m|^2}
	= \sum_{m=1}^\infty  \left\| \phi_m(x)\right\|^2_{L^\infty(D)}
\end{align*}
due to $\ev{\xi_m\xi_n} = \delta_{mn}$.
Moreover, by a classical result \cite[Lemma 4.16]{Kallenberg2002} 
from probability theory this implies that the series converges also $\mu$-a.e. in $L^\infty(D)$.
Thus, if \eqref{equ:KL_modes} holds, then we get $\mu(\Gamma) = 1$.
It remains to state conditions under which we can ensure that $\vxi \mapsto u(\vxi)$ belongs to $L^2_\mu(\Gamma; H_0^1(D))$.
Measurability follows from the continuous dependence of the weak solution $u\in H_0^1(D)$ on $\exp(a)\in L^\infty(D)$, see \cite{GilbargTrudinger2001}.
Moreover, if we can ensure that for $\underline a(\vxi) := \essinf_{x\in D} \exp(a(x,\vxi))$ 
we have $\underline a^{-1}\in L^2_\mu(\Gamma; \bbR)$ 
(e.g., by Fernique's lemma, as shown in \cite{Charrier2012})
then the $\vxi$-pointwise application of the Lax-Milgram lemma \cite{GilbargTrudinger2001} 
yields for the random solution $u\in L^2_\mu(\Gamma; H_0^1(D))$.
The latter can be guaranteed by an even weaker assumption than \eqref{equ:KL_modes}

\begin{assum}[{\cite[Assumption A]{BachmayrEtAl2015}}]\label{assum:KL_1}
There exists a strictly positive sequence $(\tau_m)_{m\in\bbN}$ such that
\begin{align*}
	\sup_{x\in D} \sum_{m=1}^\infty \tau_m |\phi_m(x)| < \infty, \qquad 
	\sum_{m=1}^\infty \exp(-\tau^2_m) < \infty.
\end{align*}
\end{assum}

Under Assumption~\ref{assum:KL_1}, it is shown in 
\cite[Corollary 2.1]{BachmayrEtAl2015} 
that $u\in L^2_\mu(\Gamma; H_0^1(D))$ with $\mu(\Gamma)=1$, hence $u:\Gamma\to H_0^1(D)$ satisfies Assumption \ref{assum:f}.

\paragraph{Verifying Assumption \ref{assum:Xi}} 
It is obvious that for Gauss-Hermite nodes there holds $\Xi^{(\vi)} \subset \Gamma$, $\vi \in \mc F$,  with $\Gamma$ as in \eqref{equ:Gamma}, because due to $\vi \in \mc F$ there exists an $M\in\bbN$ such that for $\vxi\in\Xi^{(\vi)}$ we have $\xi_m = \xi^{(0)}_0$ for any $m\geq M$ and $\xi^{(0)}_0=0$.
Actually, by Assumption \ref{assum:KL_1} there holds for any $\vxi \in \ell^\infty(\bbN)$ that $\vxi\in\Gamma$:
\begin{align*}
	\left\| \sum_{m=1}^\infty \phi_m \xi_m\right\|_{L^\infty(D)}
	& \leq \|\vxi\|_{\ell^\infty}\, \sup_{x\in D} \sum_{m=1}^\infty |\phi_m(x)|
	\leq \frac{\|\vxi\|_{\ell^\infty}}{\min_m \tau_m} \, \sup_{x\in D} \sum_{m=1}^\infty \tau_m\, |\phi_m(x)|
	<\infty,
\end{align*}
where $\min_m \tau_m > 0$, because Assumption \ref{assum:KL_1} implies $\tau_m\to \infty$ as $m\to \infty$.

\paragraph{Verifying Assumption \ref{assum:df}} Again, we refer to results from \cite{BachmayrEtAl2015}, namely, \cite[Theorem 4.2]{BachmayrEtAl2015} where the authors show that the (weak) solution $u$ of \eqref{equ:PDE} satisfies Assumption \ref{assum:df} for any $r\in\bbN_0$ given

\begin{assum}\label{assum:KL_2}
There exists a strictly positive sequence $(\tau^{-1}_m)_{m\in\bbN} \in \ell^p(\bbN)$ such that
\begin{align*}
	\sup_{x\in D} \sum_{m=1}^\infty \tau_m |\phi_m(x)| < \infty.
\end{align*}
\end{assum}

Please note that Assumption \ref{assum:KL_2} implies Assumption \ref{assum:KL_1}, see \cite[Remark 2.2]{BachmayrEtAl2015}.
Hence, we obtain

\begin{theo}\label{cor:conv-final-for-PDE}
Let $a$ be given as in \eqref{equ:KLE} and satisfy Assumption \ref{assum:KL_2}.
Then there exists a nested sequence of monotone finite subsets $\Lambda_N\subset \mc F$ with $|\Lambda_N| = N$ such that for the sparse collocation operator $U_{\Lambda_N}$ based on Gauss-Hermite nodes applied to the solution $u$ of \eqref{equ:PDE} holds 
\begin{align*}
	\|u - U_{\Lambda_N} u\|_{L^2_\mu(\bbR^\bbN; H_0^1(D))} 
	\; \leq \; C_1 N^{-\left(\frac 1{p} - \frac 12\right)}
	\; \leq \; C_2 |\Xi_{\Lambda_N}|^{-\left(\frac 1{2p} - \frac 14\right)}.
\end{align*}
\end{theo}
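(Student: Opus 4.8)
The plan is to read this statement as the capstone corollary of Section~\ref{sec:conv}: under Assumption~\ref{assum:KL_2} the solution map $\vxi\mapsto u(\vxi)\in H_0^1(D)$ of \eqref{equ:PDE} satisfies the structural hypotheses of the Gauss--Hermite theory, so both displayed bounds follow by specializing Theorem~\ref{cor:conv_GH} and the node-count estimate of Proposition~\ref{propo:X_Lambda} to one and the \emph{same} index sequence $\Lambda_N$. First I would verify Assumptions~\ref{assum:f}, \ref{assum:Xi} and~\ref{assum:df} for $u$; then I would invoke Theorem~\ref{cor:conv_GH} to obtain the $N$-rate; and finally I would convert this into a rate in $|\Xi_{\Lambda_N}|$.

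For the verification step I would reproduce the three ``Verifying Assumption'' paragraphs preceding the statement. Since $(\tau_m^{-1})_{m\in\bbN}\in\ell^p(\bbN)$, Assumption~\ref{assum:KL_2} implies Assumption~\ref{assum:KL_1} by \cite[Remark 2.2]{BachmayrEtAl2015}, whereupon \cite[Corollary 2.1]{BachmayrEtAl2015} gives $u\in L^2_\mu(\Gamma;H_0^1(D))$ with $\mu(\Gamma)=1$, which is exactly Assumption~\ref{assum:f}. Assumption~\ref{assum:Xi} holds because every Gauss--Hermite node is finitely supported, hence an $\ell^\infty$-bounded point of $\Gamma$, as shown above. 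Finally, \cite[Theorem 4.2]{BachmayrEtAl2015} yields that $u$ satisfies Assumption~\ref{assum:df} for \emph{every} $r\in\bbN_0$; since $r$ is unconstrained there, I would fix a value with $r>4+\tfrac2p$, the threshold required in Subsection~\ref{subsec:convGH}.

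With the hypotheses in force, Theorem~\ref{cor:conv_GH} furnishes a nested sequence of monotone finite sets $\Lambda_N\subset\mc F$ with $|\Lambda_N|=N$ and
\[
	\|u-U_{\Lambda_N}u\|_{L^2_\mu(\bbR^\bbN;H_0^1(D))}\leq C_1(1+N)^{-(\frac1p-\frac12)}\leq C_1 N^{-(\frac1p-\frac12)},
\]
the shift $1+N\mapsto N$ being absorbed into the constant for $N\geq1$; this is the first inequality. For the second I would apply Proposition~\ref{propo:X_Lambda} to this \emph{same} $\Lambda_N$, which gives $|\Xi_{\Lambda_N}|\leq\tfrac12 N(N+1)\leq N^2$ and hence $N\geq|\Xi_{\Lambda_N}|^{1/2}$. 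Because $\tfrac1p-\tfrac12>0$ for $p\in(0,2)$, substituting this lower bound for $N$ yields
\[
	C_1 N^{-(\frac1p-\frac12)}\leq C_1|\Xi_{\Lambda_N}|^{-\frac12(\frac1p-\frac12)}=C_2|\Xi_{\Lambda_N}|^{-(\frac1{2p}-\frac14)},
\]
which is precisely the second bound and reproduces Theorem~\ref{theo:GH-conv-vs-nodes}.

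I expect no genuine analytic difficulty, since every ingredient is already in place; the one point meriting care is that both estimates must hold for a \emph{single} sequence $\Lambda_N$. This is automatic in the argument above because the node-count bound is applied to the very sequence produced by Theorem~\ref{cor:conv_GH} rather than to a separately optimized one, so the only real observation is that the two theorems of Subsection~\ref{subsec:convGH} refer to the same $\Lambda_N$. The remaining bookkeeping---the dependence of $C_1,C_2$ on $u$ through the weighted sum $\sum_{\vnu\in\mc F} b_\vnu\|u_\vnu\|_{\mc H}^2$ of Theorem~\ref{theo:Bachmayr1} and on the node family through the constant $K$ of Lemma~\ref{lem:bound_UiHnu}---is routine.
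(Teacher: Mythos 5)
Your proposal is correct and follows exactly the paper's own route: the three ``Verifying Assumption'' paragraphs preceding the theorem supply Assumptions~\ref{assum:f}, \ref{assum:Xi} and \ref{assum:df} (the latter for arbitrary $r$, so $r>4+\tfrac2p$ can be fixed), after which the paper simply combines Theorem~\ref{cor:conv_GH} with the node bound of Proposition~\ref{propo:X_Lambda} applied to the same sequence $\Lambda_N$, precisely as you do. Nothing is missing.
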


%
%

\section{Numerical Experiments}\label{sec:numerics}
We apply the sparse collocation outlined and analyzed in the previous sections to approximate the solution $u$ of a simple boundary value problem taken from \cite[Section 7]{BachmayrEtAl2015}.
In particular, we verify numerically the statement of Theorem~\ref{cor:conv-final-for-PDE} and provide some comments on algorithms for constructing sparse grid approximations.

%
%

\subsection{Problem Setting}
We consider the following boundary value problem on the unit interval $D=[0,1]$:
\begin{equation} \label{equ:BVP_1D}
	-\frac{\d}{\d x} \left(a(x,\vxi)\, \frac{\d}{\d x} u(x,\vxi)\right)
	= f(x),
	\quad
	u(0,\vxi) = u(1,\vxi) = 0,
	\qquad \mu\text{-a.e.}
\end{equation}
where we choose $f(x) = 0.03 \sin(2\pi x)$ and employ for $\log a$ the following expansion
\begin{equation} \label{equ:KLE_1D}
	\log a(x, \vxi) = 0.1 \sum_{m=1}^\infty \frac{\sqrt 2}{(\pi m)^	q} \, \sin(m\pi x) \xi_m, 
	\qquad
	\xi_m\sim N(0,1) \text{ i.i.d.}, \; q \geq 1.
\end{equation}
For $q=1$ the random field $\log a$ is a Brownian bridge, cf. \cite[Section 7]{BachmayrEtAl2015}, and for $q>1$ it is a smoother random field.
In particular, we get with $\phi_m(x) := \frac{\sqrt 2}{(\pi m)^q} \, \sin(m\pi x)$ that for $k = q-1 - \varepsilon$ with $\varepsilon >0$ 
\[
	\sup_{x\in D} \sum_{m\geq 1} m^{k} |\phi_m(x)|
	\leq \frac{\sqrt 2}{\pi^q} \sum_{m\geq 1} m^{- (q-k)}
	\propto \sum_{m\geq 1} m^{-(1+\varepsilon)}
	< \infty.
\]
Thus, given $q>1$ the expansion \eqref{equ:KLE_1D} satisfies Assumption \ref{assum:KL_2} for each $p > \frac 1{q-1}$ and 
according to Theorem~\ref{cor:conv-final-for-PDE} there exists,  if $q>1.5$, a nested sequence of monotone finite subsets $\Lambda_N\subset \mc F$, $|\Lambda_N| = N$, such that for the sparse collocation operator $U_{\Lambda_N}$ based on Gauss-Hermite nodes holds
\begin{equation} \label{equ:rate_1D}
	\|u - U_{\Lambda_N} u\|_{L^2_\mu(\bbR^\bbN; H_0^1(D))} 
	\; \leq \; C N^{-\left(q-1.5\right)}	
	\; \leq \; C |\Xi_{\Lambda_N}|^{-\left(\frac {q-1.5}{2}\right)}.
\end{equation}
In the following we will verify these rates numerically for various values of $q$.

%
%
\subsection{Numerical Algorithms}
The multi-index sets $\Lambda_N$ appearing in Theorem~\ref{cor:conv-final-for-PDE} and \eqref{equ:rate_1D} correspond to the largest entries in a $p$-summable decreasing sequence $(\hat c_\vnu)_{\vnu\in\mc F}$ which dominates $\left(c_\vnu/b^{1/2}_\vnu\right)_{\vnu\in\mc F}$.
Such a dominating sequence was constructed in Lemma \ref{lem:c_nu_b_nu}.
However, the resulting multi-index sets $\Lambda_N$ are in general not available as closed form expressions and need to be constructed by numerical algorithms.

\paragraph{A-priori algorithm}
The following greedy algorithm is based on \cite{GerstnerGriebel2003} and appears in a similar form in the recent work \cite{Chen2016}.
It successively adds to the set of multi-indices $\Lambda$ a new multi-index $\vnu$ from the set of neighbors $\mc N(\Lambda)$ which maximizes $|\hat c_{\vnu}|$.
A constraint $m_\text{buffer}$ restricts the index of dimensions considered for admissible neighbors:
\begin{enumerate}
\item
Initialize $N=1$ and $\tilde \Lambda_N := \{\boldsymbol 0\}$, choose $m_\text{buffer}\in\bbN$ and $N_{\max} \in \bbN$.

\item
For $N=2,\ldots,N_{\max}$ set
\begin{align} \label{equ:index_choice_prior}
	\tilde \Lambda_{N} := \tilde \Lambda_{N-1} \cup \{\vnu_{N}^*\}, 
	\qquad
	\vnu_{N}^* := \argmax_{\vnu \in \mc N(\tilde \Lambda_{N-1})} \left| \hat c_\vnu \right|
\end{align}
where with $\supp(\vnu) := \{m\in\bbN: \nu_m >0\}$ and $\supp(\Lambda) := \bigcup_{\vnu \in \Lambda} \supp(\vnu)$
\begin{align*}
	\mc N(\Lambda) & := \{\vnu \in \mc F\setminus \Lambda:\, \vnu - \ve_m \in \Lambda\; \forall m \in\supp(\vnu) \text{ and }\\
	& \qquad\qquad \nu_m = 0 \; \text{ for } m > \max(\supp(\Lambda)) + m_\text{buffer}\}.
\end{align*}
\end{enumerate}
The set of admissible neighbors $\mc N(\Lambda)$ of $\Lambda$ is defined such that adding any $\vnu \in \mc N(\Lambda)$ to $\Lambda$ maintains monotonicity.
The restriction in the definition of $\mc N(\Lambda)$ above is that we do not allow  the \emph{activation} of any dimension $m\in\bbN$, i.e., including $\vnu = \ve_m$ for arbitrarily (large) $m\in\bbN$, but restrict the selection to the ``next'' $m_\text{buffer}$ higher dimensions.
Moreover, for our numerical simulations, we have chosen
\[
	\hat c_\vnu := \prod_{m\geq 1} (\nu_m)^{2\theta+2-r} \tau_m^{-2(1\wedge \nu_m)}
\]
with $\tau_m = m^{q-1}$, $\theta=1$ and a suitable value\footnote{We used $r = 2\, ( 2(\theta+1) + 2/p + 1) = 10+4(q-1)$ in the numerical simulations.} for $r > 2(\theta+1) + \frac 2p$, cf. the proof of Lemma \ref{lem:c_nu_b_nu}.

\paragraph{A-posteriori algorithm}
Beside this \emph{a-priori} construction which is usually cheap to run, we also apply a more costly a-posteriori algorithm for generating monotone multi-index sets $\Lambda_N$. 
Such an algorithm already appeared in \cite{SchillingsSchwab2013, ChkifaEtAl2014, ChkifaEtAl2015, Chen2016, NobileEtAl2016} and is motivated by using \emph{a-posteriori} heuristics for estimating the improvement of including $\Delta_\vnu u$ in the sparse collocation approximation.
In particular, the a-posteriori algorithm works exactly as the a-priori algorithm except for substituting the choice \eqref{equ:index_choice_prior} by
\begin{align} \label{equ:index_choice_post}
	\vnu_{n}^* := \argmax_{\vnu \in \mc N(\tilde \Lambda_{n-1})} \frac{\left\| \Delta_\vnu u \right\|_{L^\infty_\mu}}{|\Xi^{(\vnu)}|}.
\end{align}
The ratio $\| \Delta_\vnu u \|_{L^\infty_\mu}/|\Xi^{(\vnu)}|$ represents the \emph{profitability} or \emph{profit} of the multi-index $\vnu \in \mc F$, i.e., the associated gain in approximation $\| \Delta_\vnu u \|_{L^\infty_\mu}$ in relation to the associated computational cost $|\Xi^{(\vnu)}|$.
By choosing the most profitable multi-index in the neighborhood of $\tilde \Lambda_{n-1}$ we may obtain  a better sparse collocation approximation than when applying the a-priori construction \eqref{equ:index_choice_prior}, although the theory developed above does not apply to the multi-indices generated in this way.
Here, we estimated $\| \Delta_\vnu u \|_{L^\infty_\mu}$ as in \cite{NobileEtAl2016} by
\[
	\| \Delta_\vnu u \|_{L^\infty_\mu} 
	\approx 
	\max_{\vxi_\vk \in \Xi^{(\vnu)}} \|\rho(\vxi_\vk) \, \left[\Delta_\vnu u\right](\vxi_\vk)\|_{H_0^1(D)},
\]
where $\rho(\vxi) = \exp(- \frac 12 \sum_{m\geq1}\xi_m^2 )$ represents the (unnormalized) product density funtion of $\mu = \bigotimes_{m\geq1} N(0,1)$.
Thus, for the a-posteriori algorithm we have to evaluate $u$ on a much larger grid than just $\Xi_{\tilde \Lambda_N}$, namely,
$
\Xi_{\tilde \Lambda_{N}} \cup \Xi_{\mc N(\tilde \Lambda_{N-1})},
\, 
\Xi_{\mc N(\tilde \Lambda_{N-1})} := \bigcup_{\vnu \in \mc N(\tilde \Lambda_{N-1})} \Xi^{(\vnu)}.
$
We will refer to $\Xi_{\tilde \Lambda_N}$ as the a-posteriori grid (associated with $\Lambda_N$) and to $\Xi_{\tilde \Lambda_{N}} \cup \Xi_{\mc N(\tilde \Lambda_{N-1})}$ as the extended grid (associated with $\Lambda_N$).
The latter represents the ``true'' computational cost of the sparse collocation approximation generated by the a-posteriori algorithm.

\begin{rem}
For our numerical simulations we choose a maximal number of parameter dimensions $M$, which may be arbitrarily large, to construct the refererence solution.
Then, for a given $\vxi \in \bbR^M$ we approximate the solution $u(x,\vxi)$ to \eqref{equ:BVP_1D} by evaluating its exact representation
\[
	u(x,\vxi) = \int_0^x \frac{K(\vxi) - F(y)}{a(y,\vxi)} \, \d y,
	\qquad
	 F(x) := \int_0^x f(y) \, \d y,
	\quad
	K(\vxi) := \frac{\int_0^1 \frac{F(y)}{a(y,\vxi)} \, \d y}{\int_0^1 \frac{1}{a(y,\vxi)} \, \d y},
\]
by numerical quadrature, particularly the trapezoidal rule based on an equidistant spatial grid with spacing $\Delta x = 2^{-10}$.
\end{rem}

%
%
\subsection{Results}
We are now ready to discuss the details and the results of the numerical tests we performed. 
The tests are divided into two parts: in the first set of experiments 
we aim at validating the sharpness of our analysis, 
i.e., whether we can actually observe numerically the rate predicted by Theorem~\ref{cor:conv-final-for-PDE} for the case of countably many random variables; in the second set of experiments,
we will instead gradually increase the number of random variables 
and see if the observed rate of convergence is actually dimension-independent. 
Concerning the first set of experiments, we recall that the convergence results in Theorem~\ref{cor:conv-final-for-PDE} strictly apply only to the sparse collocation constructed by the a-priori index selection algorithm.
However, we will assess whether the set of indices proposed by the a-posteriori construction, i.e., the a-posteriori grid, achieves the same rate and also examine the convergence rate w.r.t.~number of points in the extended grid.

\paragraph{Tests - Part I}

\begin{table}
  \caption{Statistics for numerical results in Tests - Part I. See equation \eqref{equ:rate_1D} for the theoretical rates.}
  \label{table:conv-stats}
  \centering
  \begin{tabular}[tbp]{l|l|ccc|ccc}
     $q$ 	& \bf var. perc.& \multicolumn{3}{c|}{\bf rate w.r.t. $|\Lambda_N|$} 		& \multicolumn{3}{c}{\bf rate w.r.t. $|\Xi_{\Lambda_N|}$} \\	
     		&				& theory	& a-post. 	& a-priori 						& theory	& a-post. 	& a-priori \\
\hline	 
    1		& 99.91\%   	& N.A.		& 0.5 		& 0.4							& N.A.		& 0.5 		& 0.5	\\ 	
    1.5		& 99.9999\% 	& 0			& 0.8 		& 0.7							& 0			& 0.9 		& 0.8	\\ 	
    2		& 99.9999999\% 	& 0.5		& 1.1 		& 1.0							& 0.25		& 1.2 		& 1.1	\\ 	
    3		& 100\%  		& 1.5		& 1.7 		& 1.7							& 0.75		& 2 		& 2	\\ 	
  \end{tabular}
\end{table}

In this section, we will compare the numerical convergence rate of both the a-priori and the a-posteriori
versions of the proposed algorithm against the theoretical convergence rate for $q=1,1.5,2,3$ to verify
the sharpness of our theoretical analysis. 
For each tested value of $q$, the errors will be computed against a reference solution 
$u_{ref}$ based on the first $640$ random variables which captures more than 99\% of the log-diffusion variability
for every value of $q$ (see Table \ref{table:conv-stats} for the precise value). The error is computed
with a Monte Carlo sampling over $N_{MC}=1000$ random samples:
\begin{align}\label{eq:err-def}
\| u - U_{\Lambda_N} u\|_{L^2_\mu(\bbR^\bbN;H^1_0(D))} 
& \approx \| u_{ref} - U_{\Lambda_N} u \|_{L^2_\mu(\bbR^\bbN;H^1_0(D))} \nonumber \\
& \approx \frac{1}{N_{MC}}\sum_{k=1}^{N_{MC}} \| u_{ref}(\vxi_k) - U_{\Lambda_N} u(\vxi_k) \|_{H^1_0(D)},   \end{align}
where $\vxi_k$ are samples drawn from $\bigotimes_{m=1}^{640}N(0,1)$.
We remark that we have verified that $N_{MC}$ is large enough for our purposes.%
\footnote{i.e., repeating the same analysis with $N_{MC}=5000$ produced identical results.}  

\begin{figure}[tpb]
  \centering
  \includegraphics[width=0.48\textwidth]{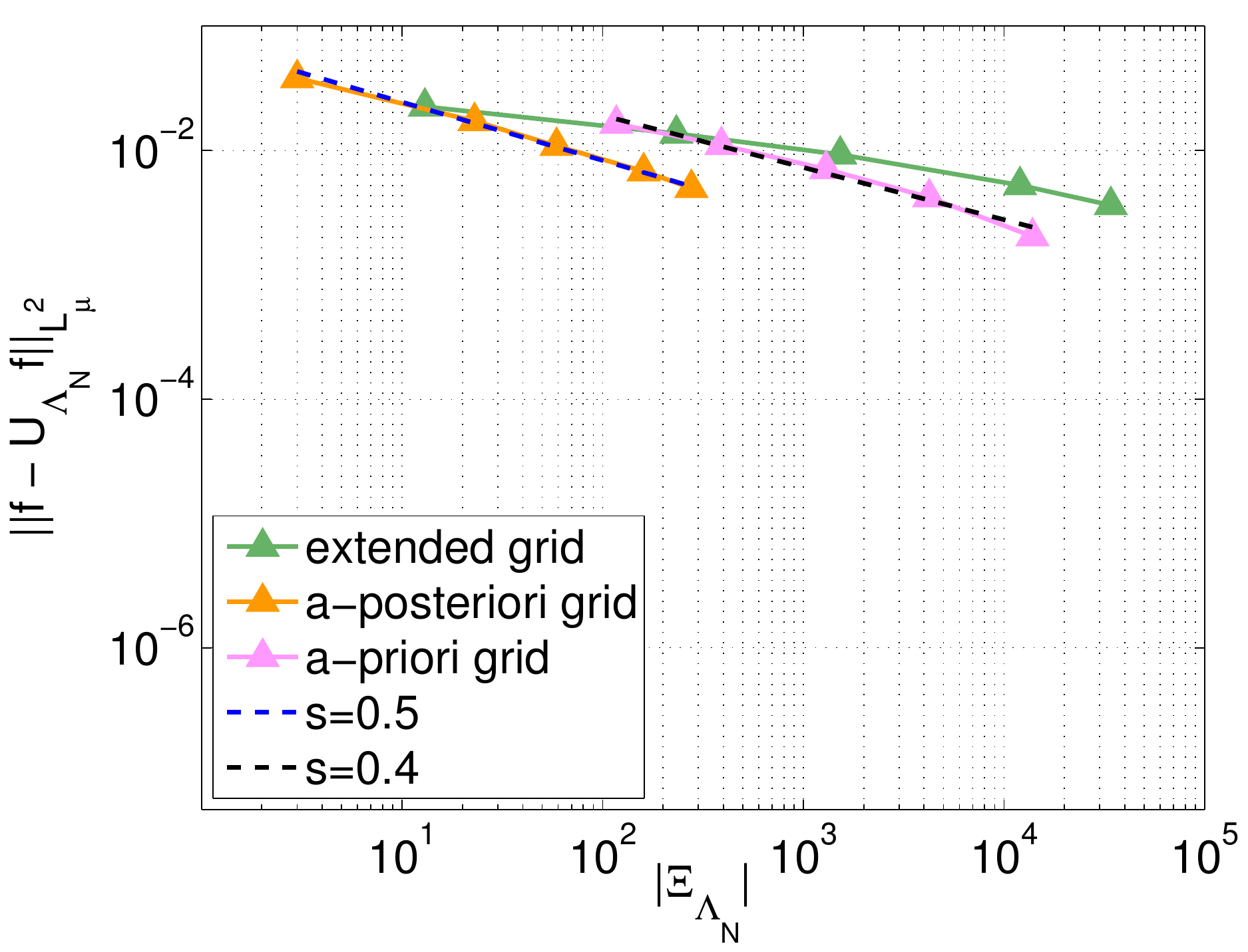}
  \includegraphics[width=0.48\textwidth]{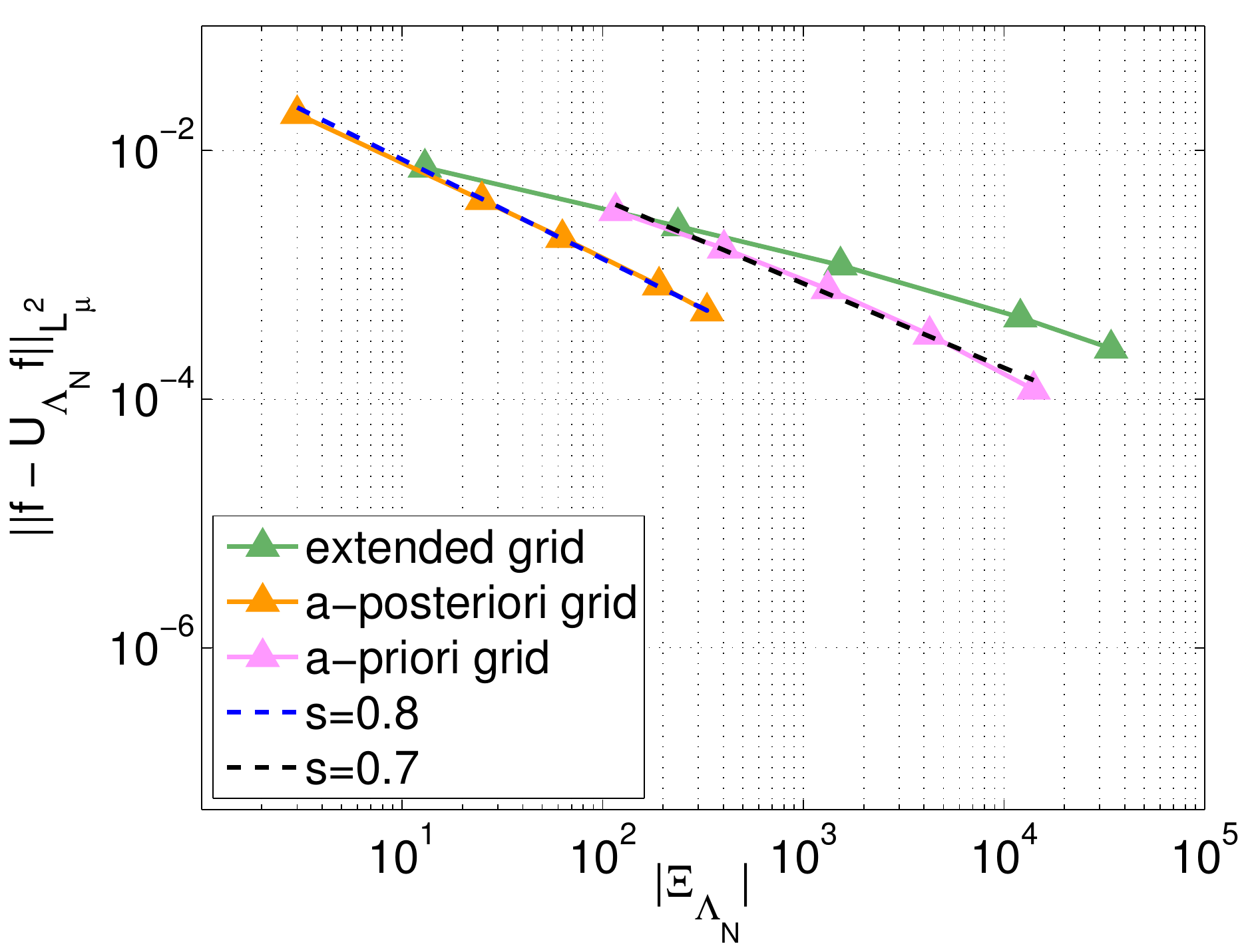} \\
  \includegraphics[width=0.48\textwidth]{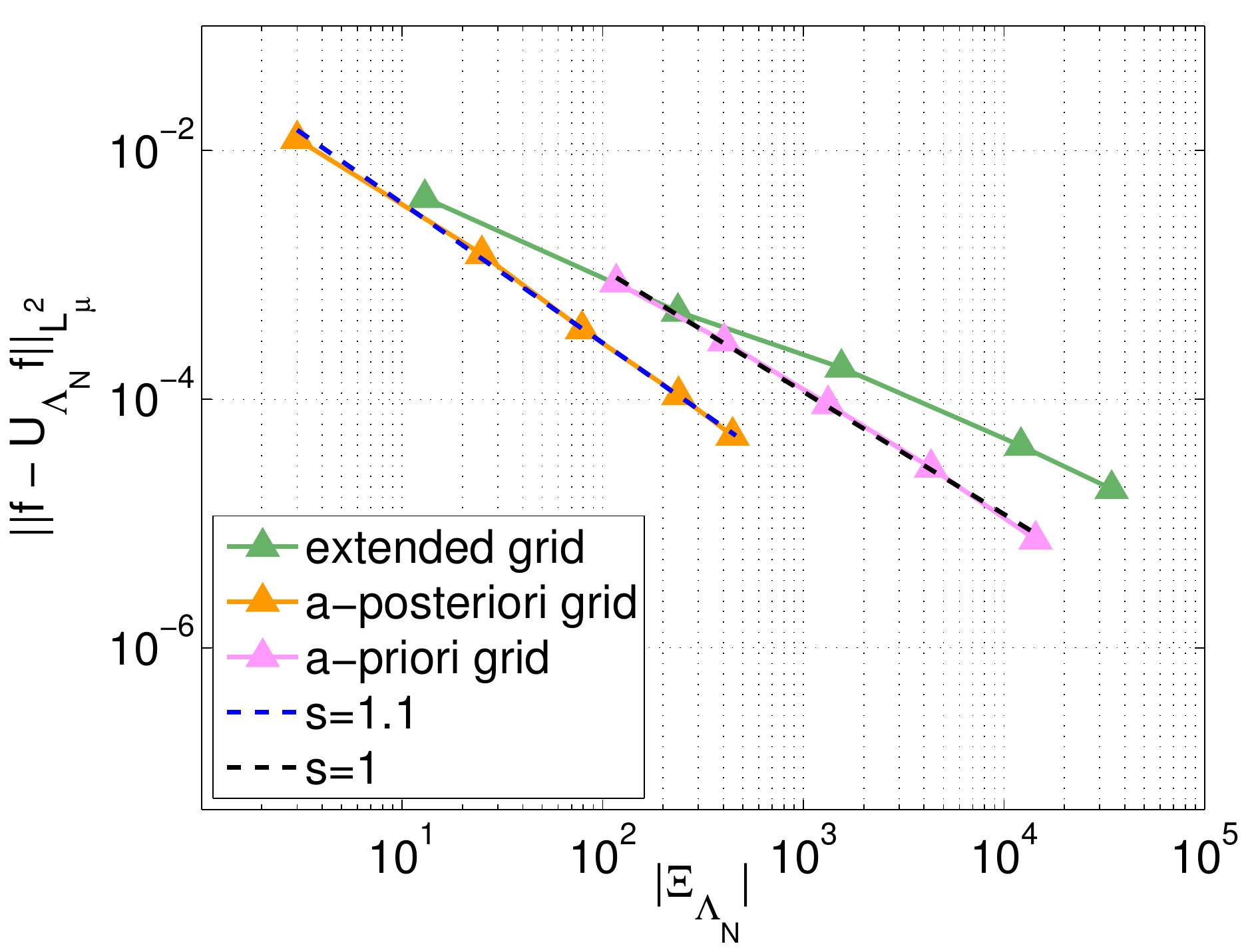}
  \includegraphics[width=0.48\textwidth]{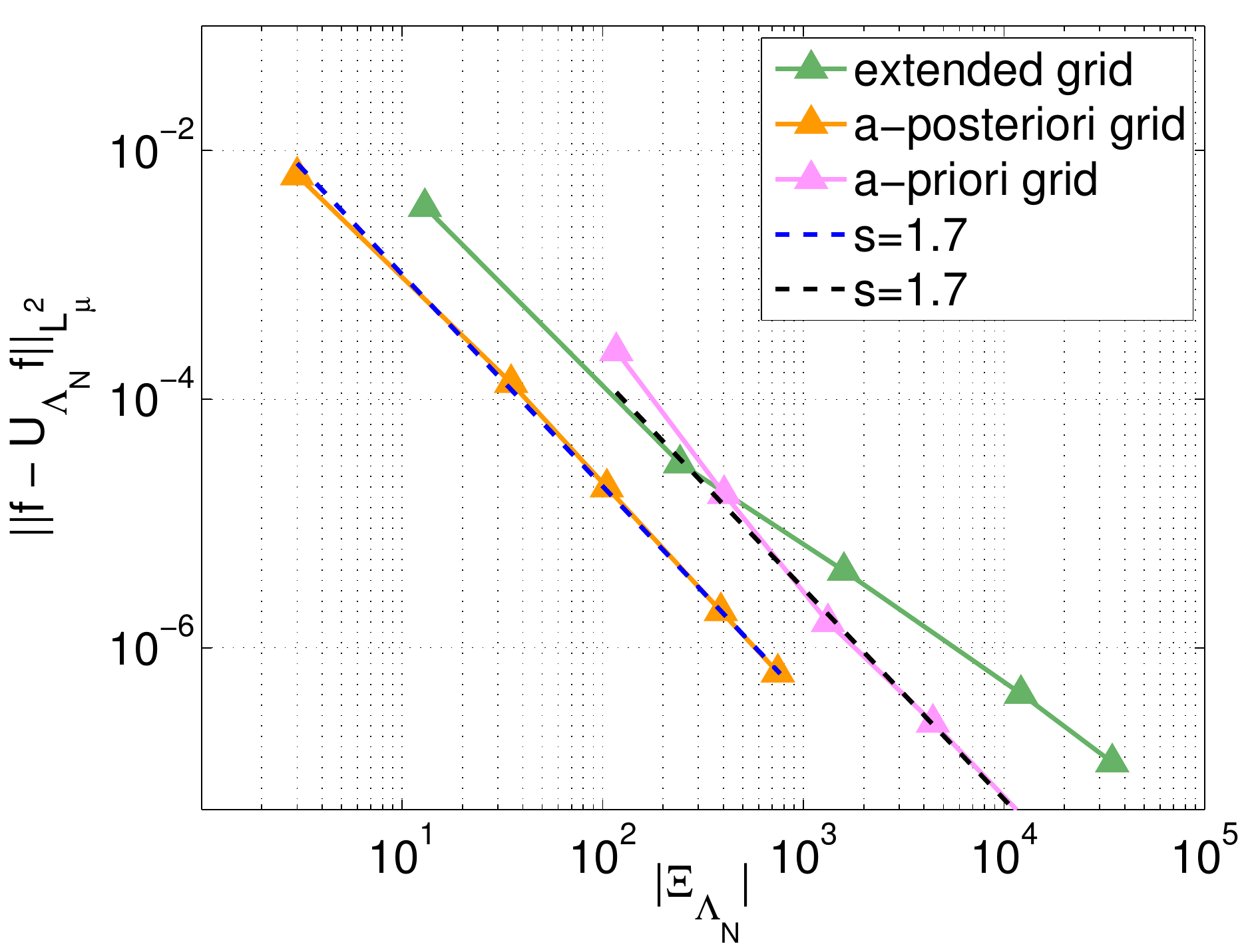}
  \caption{From top-left to bottom-right: convergence with respect to the number of points in the sparse grid for $q=1,1.5,2,3$. }
  \label{fig:err-vs-pts-KL}
\end{figure}

We begin by reporting in Figure \ref{fig:err-vs-pts-KL} the convergence of the error measure \eqref{eq:err-def} with respect to 
the number of collocation points needed to construct the sparse grid approximation for each value of $q$. 
The convergence plots in Figure \ref{fig:err-vs-pts-KL} show a monotone, well-established decreasing trend for the error
for all the variations of the sparse grid considered. 
As expected, the errors get larger in size and the convergence rate gets worse as $q$ decreases 
for all the reported sparse grids (a-posteriori grid, extended grid, a-priori).
In particular, the convergence rate appears to be similar for the a-priori and the a-posteriori algorithm,
with the rate of the latter being actually slightly larger, thus validating the a-posteriori construction. 
On top of this, the error of the a-posteriori algorithm appears to be smaller in size than the a-priori construction. 
We also remark that the rate that we measure numerically is better than the one predicted by our theory, cf. Table \ref{table:conv-stats}.
The quite significant difference between the rate of convergence of the a-posteriori grid
and the extended grid is also to be expected.
These results are consistent with the ones detailed in \cite{Chen2016}, 
although there the a-priori construction is a bit different from the one we propose.
At this junction, two factors can explain the suboptimality of our theoretical result:
a conservative estimate of the growth of the number of points in the sparse grid with
respect to the number of indices in the set $\Lambda_N$ and a conservative link
between the summability of the log-diffusion field representation and the convergence
of the sparse grid. As will be clearer later, both issues turn out to be actually affecting our analysis.

The numbers in the plot show the number of activated random variables in the a-posteriori grid and in the a-priori grid, i.e., 
in how many random variables these grids allocate at least one non-trivial point (observe that by construction
the numbers for the extended grid are the ones of a-posteriori grid plus the buffer $m_\text{buffer}$). 
It can be seen that this number steadily increases. 
The numerical results we show were obtained by $m_\text{buffer}=5$. \footnote{We report (not shown)
that we have also run the same simulations with a larger buffer $m_\text{buffer}=20$ 
and the results were identical (i.e., same a-posteriori grid and same number of activated random variables).}

\begin{figure}[tpb]
  \centering
  \includegraphics[width=0.48\textwidth]{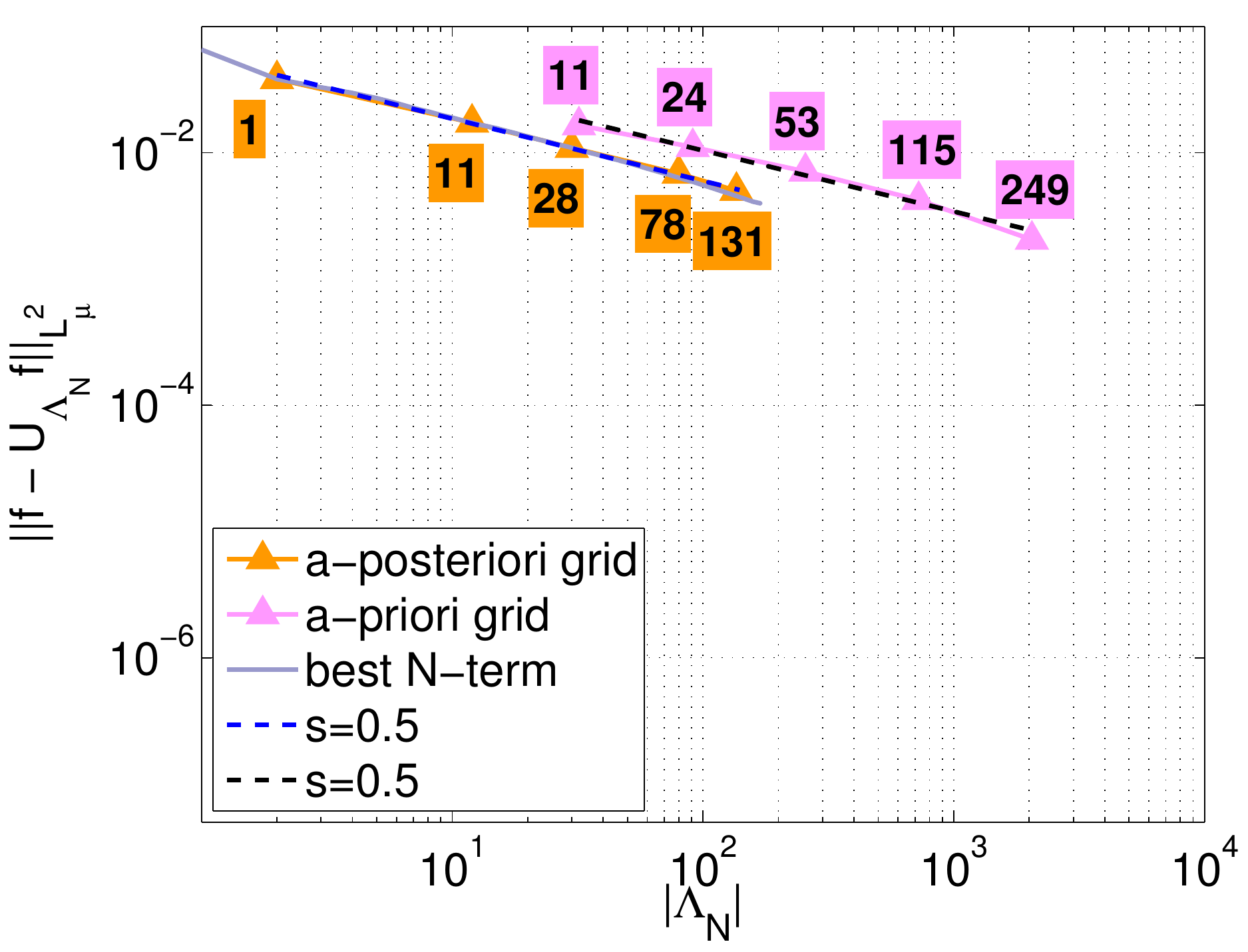}
  \includegraphics[width=0.48\textwidth]{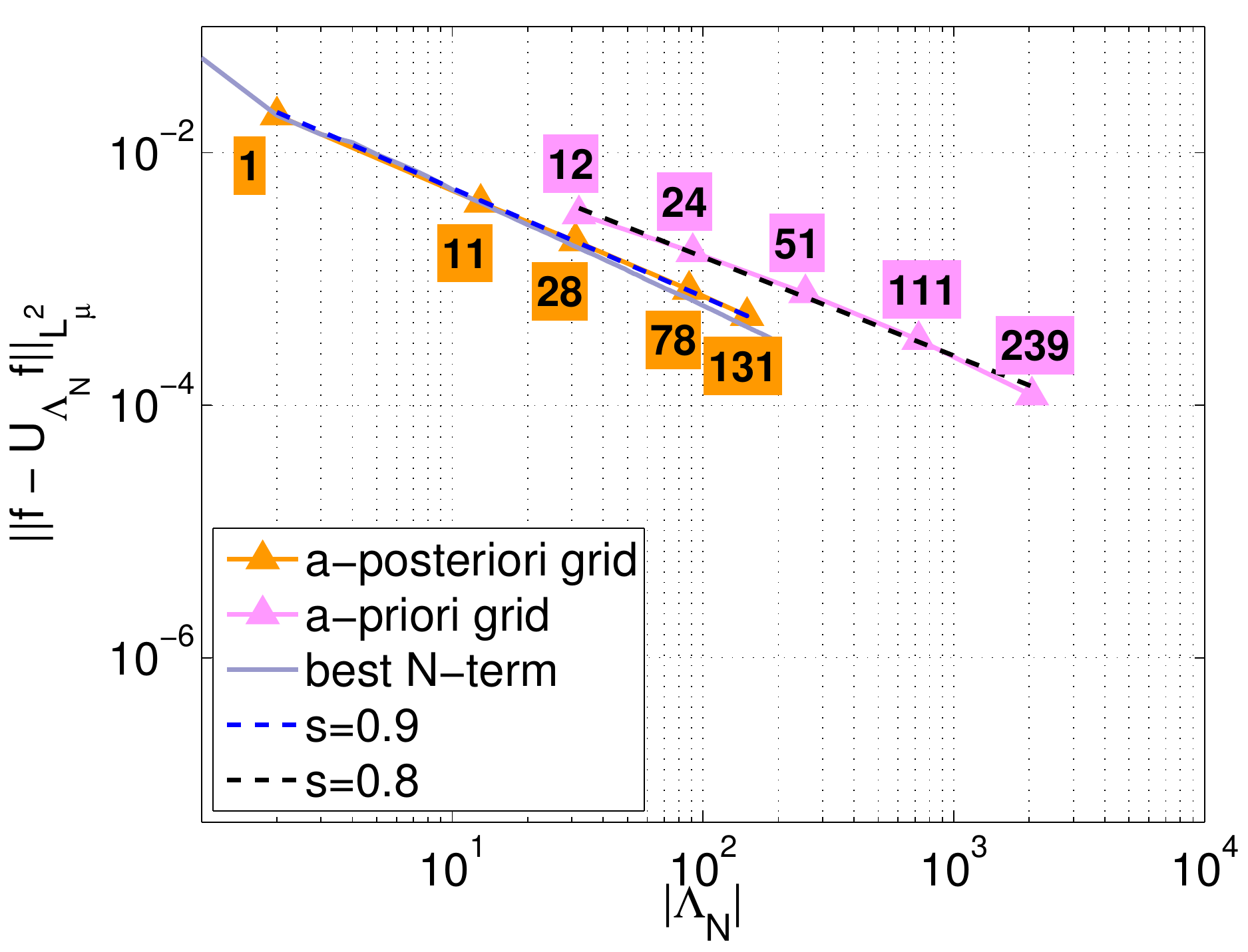} \\
  \includegraphics[width=0.48\textwidth]{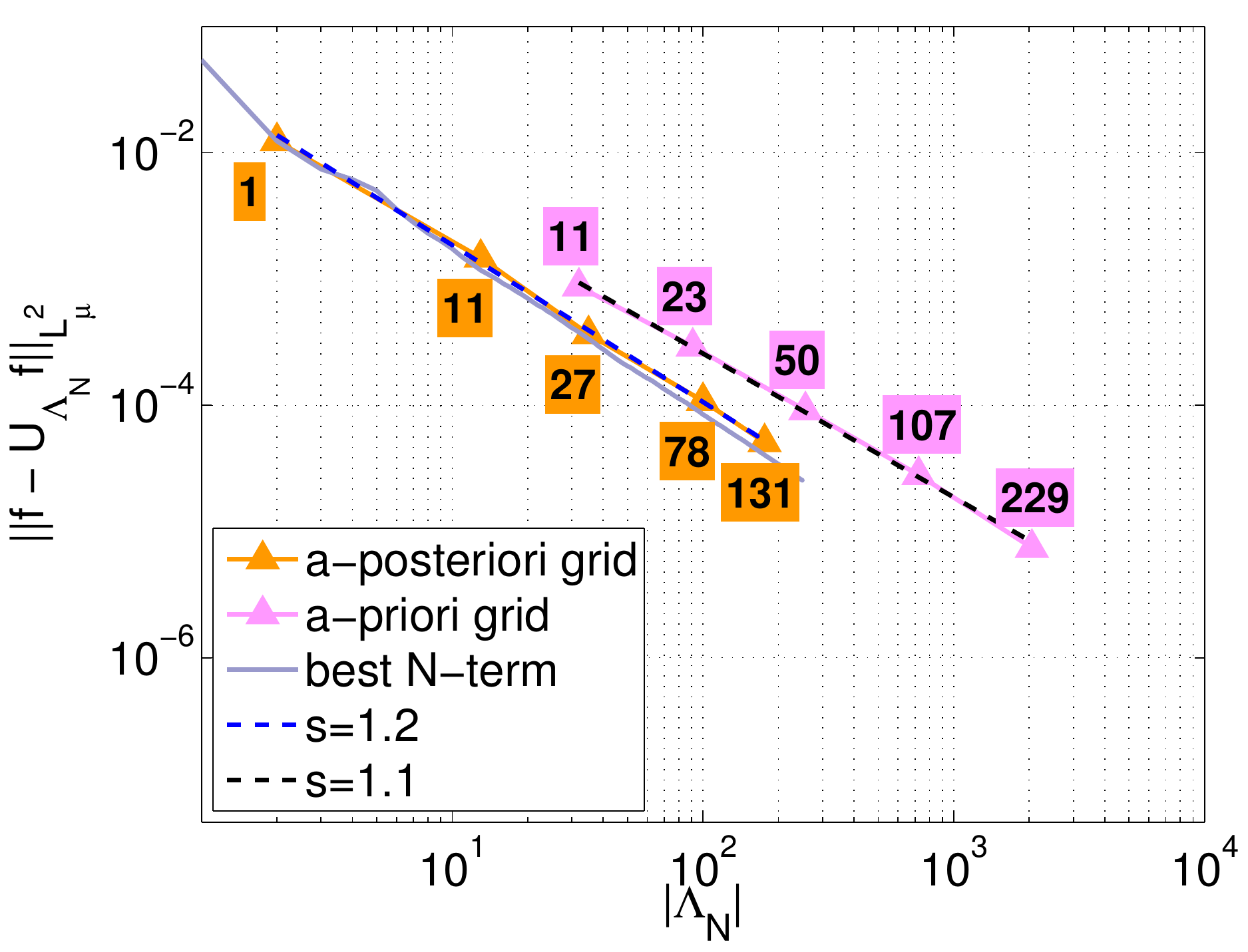}
  \includegraphics[width=0.48\textwidth]{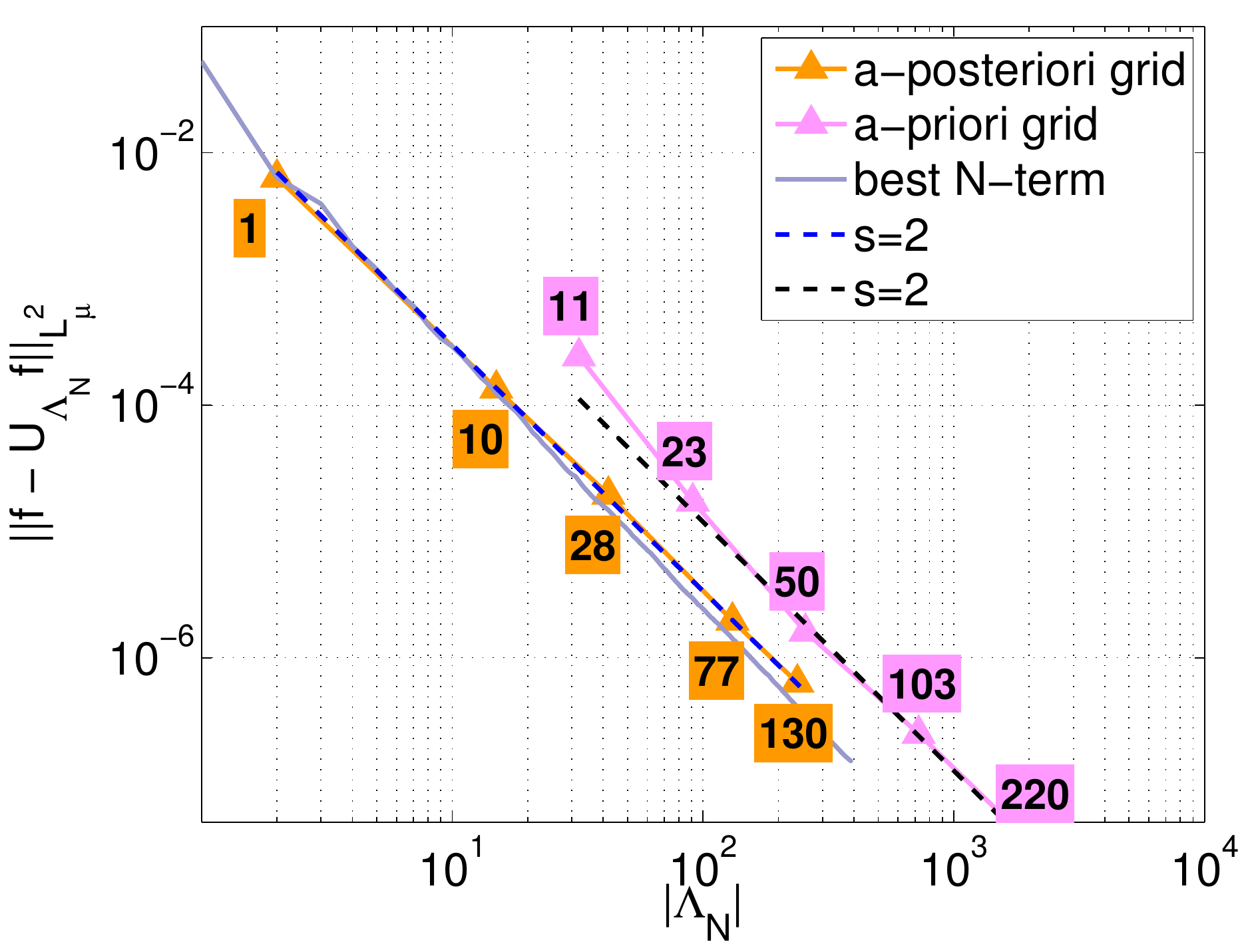}
  \caption{From top-left to bottom-right: convergence with respect to the number of indices in the set $\Lambda_N$ for $q=1,1.5,2,3$. }
  \label{fig:err-vs-idx-KL}
\end{figure}

We then report in Figure \ref{fig:err-vs-idx-KL} the convergence of the error \eqref{eq:err-def} with respect to  the number of indices in the set $\Lambda_N$. 
In this Figure, we show the convergence of both the a-priori and the a-posteriori algorithm, as well as an estimate of the convergence of the best $N$-term  
 approximation of $u$ (we will detail in a moment how we computed this approximation). 
Also in this case, the convergence plots show a monotone, well-established decreasing trend for the error.
The results are similar to the previous case: 
a) the convergence rate of the sparse grid gets worse as $q$ decreases;
b) the convergence rate seems to be identical for both the a-priori and the a-posteriori constructions,
and again quite larger than the theoretical estimate, cf. Table \ref{table:conv-stats};
c) the error of the a-posteriori algorithm is substantially smaller than the one of the a-priori algorithm. 
It is also relevant to notice that the measured convergence rate here is essentially 
identical to the one observed with respect to the number of sparse grid points.
This is in agreement with the results in \cite{Chen2016} and implies that 
for the sparse grids constructed here the growth of number of points w.r.t.~the number of indices is essentially linear, and therefore our
Lemma \ref{propo:X_Lambda} is quite conservative. 

 We now turn the attention to the best $N$-term  approximation presented in Figure \ref{fig:err-vs-idx-KL}. 
To compute this approximation, we follow \cite{feal:compgeo,lever.eal:inversion,Tamellini2012} 
and convert the extended grid first into its \emph{combination technique} form, i.e., as a linear combination of Lagrange polynomials,
and then we further convert this expression into the equivalent linear combination of
Hermite polynomials; see also \cite{constant.eldred.phip:pseudospec}. 
By sorting in decreasing order the coefficients of the Hermite expansion
thus computed and picking them one at a time, we obtain an approximation of the 
sequence of best $N$-term  approximations. \footnote{Of course, this approximation is 
as good as the original extended grid; however, we found the results to be stable as the number of points in the extended grid grows, 
and therefore we deemed this approximation to be sufficient for our purposes.}
The comparison of the best $N$-term  and the a-posteriori grid in Figure \ref{fig:err-vs-idx-KL} reveals
that the two approximations are actually very close a-posteriori grid for every value of $q$,
which suggests that the a-posteriori algorithm is producing an excellent approximation.

\paragraph{Tests - Part II}
In this set of experiments, with fix $q=2, \sigma=0.1$, and we consider log-diffusion coefficients
with $M=10,20,40,80,120,160$ random variables. For each $M$, the reference solution uses $M$ random variables as well, contrary to the previous experiment, where the reference solution was based on $640$ random variables. In this way, we aim at assessing the behavior of the 
convergence rate as $M$ increases: indeed, the previous experiment was only verifying
that we get \emph{a} rate for $M \rightarrow \infty$. We report our results in Figure \ref{fig:err-vs-idx-M},
where we show the convergence with respect to the cardinality of the index set $\Lambda_N$.
It is clearly visible that the convergence curves are all superposed at the beginning 
of the convergence and then they depart from each other: the point of departure
is actually the point where all $M$ variables have been activated. The result
seems to suggest that the convergence rate with respect to the cardinality of $\Lambda_N$
for finite $M$ is actually depending on $M$,
and decreases as $M$ increases, until reaching the asymptotic rate for $M \rightarrow \infty$. 

\begin{figure}[tpb]
  \centering
  \includegraphics[width=0.48\textwidth]{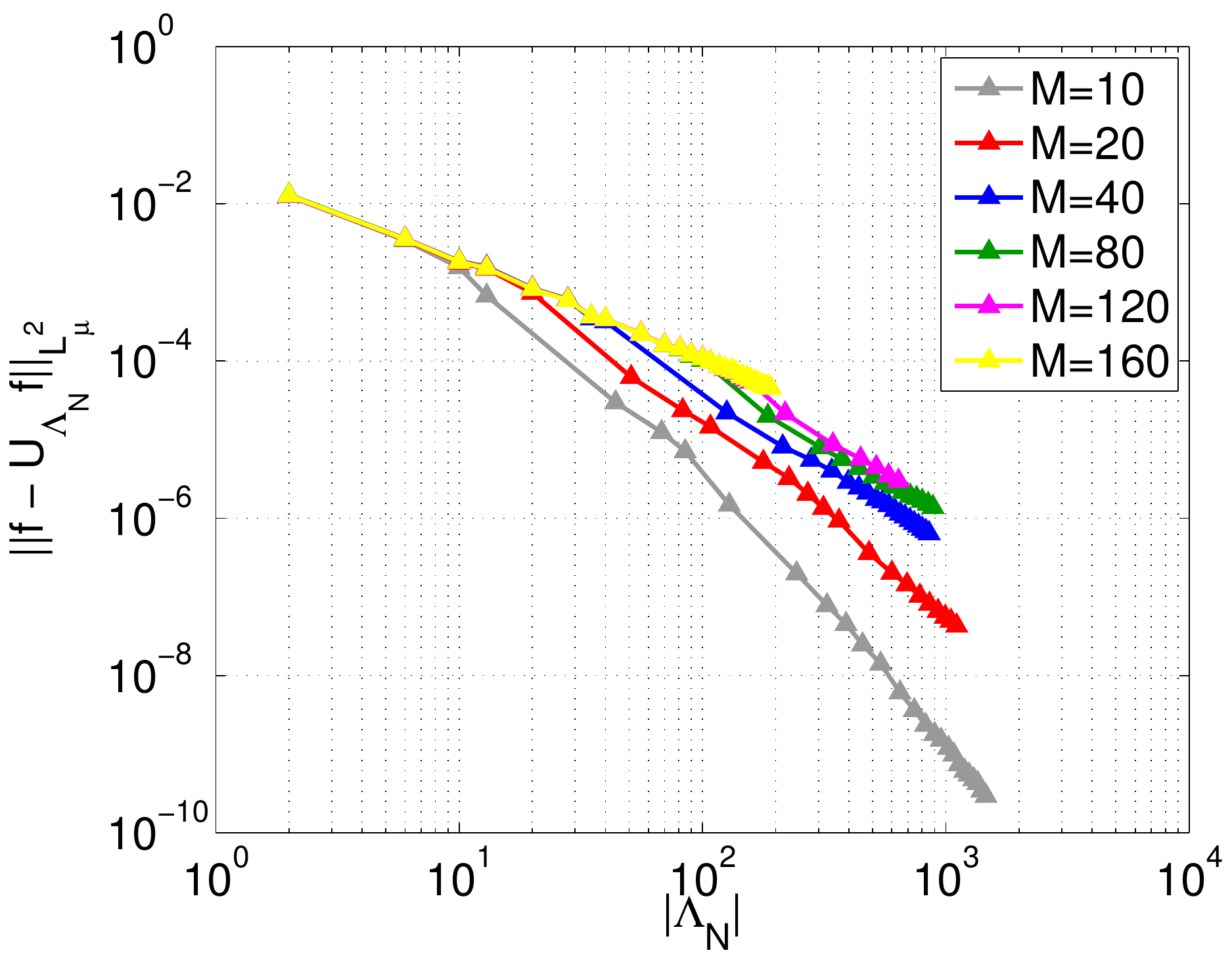}
  \caption{Convergence of the sparse grid approximation with increasingly larger number of dimensions: the asymptotic
    rate is not constant with respect to $M$.}
  \label{fig:err-vs-idx-M}
\end{figure}

%
%
\section{Conclusions}\label{sec:concl}
We have presented a general convergence analysis of sparse grid collocation based on Lagrange interpolation for functions of countably many Gaussian variables.  
In particular, we have stated sufficient conditions on the underlying univariate interpolation nodes such that for functions of a certain smoothness we obtain an algebraic rate of
convergence for the sparse collocation approximation with respect to the number of multi-indices.  
Moreover, we verified these assumptions for the classical Gauss-Hermite nodes and
were able to state also a convergence result in terms of the resulting number of collocation points. 
We finally discussed in detail that these methods can be applied to weak solutions of lognormal diffusion problems and illustrated our theory with numerical tests, which show that the convergence rate achieved by a-priori sparse grid constructions is actually higher than predicted, both with respect to the number of multi-indices and the number of collocation points. 
The classical adaptive a-posteriori sparse grid construction is also seen to achieve such rates, although not covered by our theory.

%
%

\section*{Acknowledgments}
The authors are grateful to Hans-J\"org Starkloff for pointing out the original reference to Stechkin's lemma.

\bibliographystyle{siam}
\bibliography{literature}

\begin{thebibliography}{10}

\bibitem{AbramowitzStegun1972}
{\sc M.~Abramowitz and I.~A. Stegun}, {\em Handbook of Mathematical Functions},
  vol.~55 of Applied Mathematics Series, National Bureau of Standards,
  Washington, 10th~ed., 1972.

\bibitem{Adler1981}
{\sc R.~J. Adler}, {\em The Geometry of Random Fields}, John Wiley \& Sons, New
  York, 1981.

\bibitem{BabuskaEtAl2010}
{\sc I.~Babu\v{s}ka, F.~Nobile, and R.~Tempone}, {\em A stochastic collocation
  method for elliptic partial differential equations with random input data},
  SIAM Review, 52 (2010), pp.~317--355.

\bibitem{BachmayrEtAl2015}
{\sc M.~Bachmayr, A.~Cohen, R.~DeVore, and G.~Migliorati}, {\em Sparse
  polynomial approximation of parametric elliptic {PDE}s. part {II}: lognormal
  coefficients}, ESAIM Math. Model. Numer. Anal.,  (2016).

\bibitem{BachmayrEtAl2016}
{\sc M.~Bachmayr, A.~Cohen, and G.~Migliorati}, {\em Sparse polynomial
  approximation of parametric elliptic {PDE}s. part {I}: affine coefficients},
  ESAIM Math. Model. Numer. Anal.,  (2016).

\bibitem{back.nobile.eal:comparison}
{\sc J.~B\"ack, F.~Nobile, L.~Tamellini, and R.~Tempone}, {\em Stochastic
  spectral {G}alerkin and collocation methods for {PDE}s with random
  coefficients: a numerical comparison}, in Spectral and High Order Methods for
  Partial Differential Equations, vol.~76 of Lecture Notes in Computational
  Science and Engineering, Springer, 2011, pp.~43--62.

\bibitem{back.nobile.eal:lognormal}
{\sc J.~Beck, F.~Nobile, L.~Tamellini, and R.~Tempone}, {\em A {Q}uasi-optimal
  {S}parse {G}rids {P}rocedure for {G}roundwater {F}lows}, in Spectral and High
  Order Methods for Partial Differential Equations - ICOSAHOM 2012, vol.~95 of
  Lecture Notes in Computational Science and Engineering, Springer, 2014,
  pp.~1--16.

\bibitem{Charrier2012}
{\sc J.~Charrier}, {\em Strong and weak error estimates for elliptic partial
  differential equations with random coefficients}, SIAM Journal on Numerical
  Analysis, 50 (2012), pp.~216--246.

\bibitem{Chen2016}
{\sc P.~Chen}, {\em Convergence analysis of an adaptive sparse quadrature for
  high-dimensional integration with {G}aussian random variables}.
\newblock arXiv:1604.08466, 2016.

\bibitem{Chkifa:leja}
{\sc A.~Chkifa}, {\em On the {L}ebesgue constant of {L}eja sequences for the
  complex unit disk and of their real projection}, Journal of Approximation
  Theory, 166 (2013), pp.~176 -- 200.

\bibitem{ChkifaEtAl2014}
{\sc A.~Chkifa, A.~Cohen, and C.~Schwab}, {\em High-dimensional adaptive sparse
  polynomial interpolation and applications to parametric {PDE}s}, Foundations
  of Computational Mathematics, 14 (2014), pp.~601--633.

\bibitem{ChkifaEtAl2015}
\leavevmode\vrule height 2pt depth -1.6pt width 23pt, {\em Breaking the curse
  of dimensionality in sparse polynomial approximation of parametric {PDE}s},
  J. Math. Pures Appl., 103 (2015), pp.~400--428.

\bibitem{CohenDeVore2015}
{\sc A.~Cohen and R.~DeVore}, {\em Approximation of high-dimensional parametric
  {PDE}s}, Acta Numerica, 24 (2015), pp.~1--159.

\bibitem{CohenDeVoreSchwab2010}
{\sc A.~Cohen, R.~DeVore, and C.~Schwab}, {\em Convergence rates of best
  {N}-term {G}alerkin approximations for a class of elliptic s{PDE}s},
  Foundations of Computational Mathematics, 10 (2010), pp.~615--646.

\bibitem{CohenDeVoreSchwab2011}
{\sc A.~Cohen, R.~DeVore, and C.~Schwab}, {\em Analytic regularity and
  polynomial approximation of parametric and stochastic elliptic {PDE}s},
  Analysis and Applications, 9 (2011), pp.~11--47.

\bibitem{constant.eldred.phip:pseudospec}
{\sc P.~Constantine, M.~S. Eldred, and E.~T. Phipps}, {\em Sparse
  pseudospectral approximation method}, Comput. Methods Appl. Mech. Engrg.,
  229/232 (2012), pp.~1--12.

\bibitem{DeMarchi2004}
{\sc S.~De~Marchi}, {\em On {L}eja sequences: Some results and applications},
  Applied Mathematics and Computation, 152 (2004), pp.~621--647.

\bibitem{feal:compgeo}
{\sc L.~Formaggia, A.~Guadagnini, I.~Imperiali, V.~Lever, G.~Porta, M.~Riva,
  A.~Scotti, and L.~Tamellini}, {\em Global sensitivity analysis through
  polynomial chaos expansion of a basin-scale geochemical compaction model},
  Computational Geosciences, 17(1) (2013), pp.~25--42.

\bibitem{sarkis:lognormal}
{\sc J.~Galvis and M.~Sarkis}, {\em Approximating infinity-dimensional
  stochastic {D}arcy's equations without uniform ellipticity}, SIAM J. Numer.
  Anal., 47 (2009), pp.~3624--3651.

\bibitem{GenzKeister1996}
{\sc A.~Genz and B.~D. Keister}, {\em Fully symmetric interpolatory rules for
  multiple integrals over infinite regions with {G}aussian weight}, Journal of
  Computational and Applied Mathematics, 71 (1996), pp.~299--309.

\bibitem{GerstnerGriebel2003}
{\sc T.~Gerstner and M.~Griebel}, {\em Dimension-adaptive tensor-product
  quadrature}, Computing, 71 (2003), pp.~65--87.

\bibitem{GhanemSpanos1991}
{\sc R.~Ghanem and P.~Spanos}, {\em Stochastic Finite Elements: A Spectral
  Approach}, Springer-Verlag, New York, 1991.

\bibitem{GilbargTrudinger2001}
{\sc D.~Gilbarg and N.~S. Trudinger}, {\em Elliptic Partial Differential
  Equations of Second Order}, Springer-Verlag, Berlin Heidelberg, 2001.

\bibitem{gittelson:logn}
{\sc C.~J. Gittelson}, {\em Stochastic {G}alerkin discretization of the
  log-normal isotropic diffusion problem}, Math. Models Methods Appl. Sci., 20
  (2010), pp.~237--263.

\bibitem{hajiali.eal:MISC2}
{\sc A.-L. Haji-Ali, F.~Nobile, L.~Tamellini, and R.~Tempone}, {\em Multi-index
  {S}tochastic {C}ollocation convergence rates for random {PDE}s with
  parametric regularity}, Foundations of {C}omputational {M}athematics,
  (2016).

\bibitem{HoangSchwab2014}
{\sc V.~H. Hoang and C.~Schwab}, {\em N-term {W}iener chaos approximation rates
  for elliptic {PDE}s with lognormal {G}aussian random inputs}, Mathematical
  Models and Methods in Applied Sciences, 24 (2014), pp.~797--826.

\bibitem{JantschEtAl2016}
{\sc P.~Jantsch, C.~G. Webster, and G.~Zhang}, {\em On the {L}ebesgue constant
  of weighted {L}eja points for {L}agrange interpolation on unbounded domains}.
\newblock arXiv:1606.07093, 2016.

\bibitem{Kallenberg2002}
{\sc O.~Kallenberg}, {\em Foundations of Modern Probability}, Springer, New
  York, 2002.

\bibitem{lemaitre:book}
{\sc O.~P. Le~Maitre and O.~M. Knio}, {\em Spectral Methods for Uncertainty
  Quantification: With Applications to Computational Fluid Dynamics},
  Scientific Computation, Springer, New York, 2010.

\bibitem{Leja1950}
{\sc F.~Leja}, {\em Une m{\'e}thode {\'e}l{\'e}mentaire de r{\'e}solution du
  probl{\`e}me de {D}irichlet dans le plan}, Ann. Soc. Math. Polon., 23 (1950),
  pp.~230--245.

\bibitem{MuglerStarkloff2013}
{\sc A.~Mugler and H.-J. Starkloff}, {\em On the convergence of the stochastic
  {G}alerkin method for random elliptic partial differential equations}, ESAIM:
  Mathematical Modelling and Numerical Analysis, 47 (2013), pp.~1237--1263.

\bibitem{NarayanJakeman2014}
{\sc A.~Narayan and J.~D. Jakeman}, {\em Adaptive {L}eja sparse grid
  constructions for stochastic collocation and high-dimensional approximation},
  SIAM Journal on Scientific Computing, 36 (2014), pp.~A2952--A2983.

\bibitem{Nevai1980}
{\sc P.~G. Nevai}, {\em Mean convergence of {L}agrange interpolation, {II}},
  Journal of Approximation Theory, 30 (1980), pp.~263--276.

\bibitem{nobile.eal:optimal-sparse-grids}
{\sc F.~Nobile, L.~Tamellini, and R.~Tempone}, {\em Convergence of
  quasi-optimal sparse-grid approximation of {H}ilbert-space-valued functions:
  application to random elliptic {PDE}s}, Numerische {M}athematik, 134 (2016),
  pp.~343--388.

\bibitem{NobileEtAl2016}
{\sc F.~Nobile, L.~Tamellini, F.~Tesei, and R.~Tempone}, {\em An adaptive
  sparse grid algorithm for elliptic {PDE}s with lognormal diffusion
  coefficient}, in Sparse Grids and Applications -- Stuttgart 2014,
  Springer-Verlag, 2016.

\bibitem{NobileEtAl2008b}
{\sc F.~Nobile, R.~Tempone, and C.~Webster}, {\em An anisotropic sparse grid
  stochastic collocation method for partial differential equations with random
  input data}, SIAM Journal on Numerical Analysis, 46 (2008), pp.~2411--2442.

\bibitem{NobileEtAl2008a}
\leavevmode\vrule height 2pt depth -1.6pt width 23pt, {\em A sparse grid
  stochastic collocation method for partial differential equations with random
  input data}, SIAM Journal on Numerical Analysis, 46 (2008), pp.~2309--2345.

\bibitem{lever.eal:inversion}
{\sc G.~Porta, L.~Tamellini, V.~Lever, and M.~Riva}, {\em Inverse modeling of
  geochemical and mechanical compaction in sedimentary basins through
  polynomial chaos expansion}, Water Resources Research, 50 (2014),
  pp.~9414--9431.

\bibitem{SchillingsSchwab2013}
{\sc C.~Schillings and C.~Schwab}, {\em Sparse, adaptive {S}molyak quadratures
  for {B}ayesian inverse problems}, Inverse Problems, 29 (2013).
\newblock doi:10.1088/0266-5611/29/6/065011.

\bibitem{SchwabGittelson2011}
{\sc C.~Schwab and C.~Gittelson}, {\em Sparse tensor discretizations of
  high-dimensional parametric and stochastic {PDE}s}, Acta Numerica, 20 (2011),
  pp.~291--467.

\bibitem{Stechkin1955}
{\sc S.~B. Stechkin}, {\em On the absolute convergence of orthogonal series},
  Doklady Akademii Nauk SSSR, 102 (1955), pp.~37--40.

\bibitem{Szego1939}
{\sc G.~Szeg\H{o}}, {\em Orthogonal Polynomials}, American Mathematical
  Society, New York, fourth~ed., 1975.

\bibitem{Tamellini2012}
{\sc L.~Tamellini}, {\em Polynomial Approximation of {PDE}s with Stochastic
  Coefficients}, PhD thesis, Politecnico di Milano, 2012.

\bibitem{XiuHesthaven2005}
{\sc D.~Xiu and J.~Hesthaven}, {\em High-order collocation methods differential
  equations with random inputs}, SIAM Journal on Scientific Computing, 37
  (2005), pp.~1118--1139.

\end{thebibliography}

\end{document}